\documentclass[11pt,a4paper]{article}
\usepackage{fullpage,mathrsfs,graphicx,framed,color,amssymb,amsmath,amsthm}
\usepackage{epsfig, graphicx}
\usepackage{latexsym,amsfonts,amsbsy}
\usepackage{amsmath,amsthm}
\usepackage{enumerate}
\usepackage{algorithm}
\usepackage{algpseudocode}
\usepackage{changes} 
\usepackage{color}
\usepackage[colorlinks,linkcolor=blue]{hyperref}
\usepackage[margin=1in]{geometry}
\usepackage{subcaption}
\usepackage{cleveref}
\usepackage{tikz}
\usepackage{stmaryrd}
\usetikzlibrary{positioning,calc,decorations.pathreplacing,arrows.meta}
\usepackage{xcolor}
\definecolor{mycolor}{RGB}{16,83,138}
\usepackage[normalem]{ulem}
\makeatletter 

\@addtoreset{equation}{section}
\makeatother 
\makeatletter 

\@addtoreset{equation}{section}
\makeatother 
\allowdisplaybreaks 
\makeatletter
\newenvironment{breakablealgorithm}
{
\begin{center}
\refstepcounter{algorithm}
\hrule height.8pt depth0pt \kern2pt
\renewcommand{\caption}[2][\relax]{
{\raggedright\textbf{\ALG@name~\thealgorithm} ##2\par}%
\ifx\relax##1\relax 
\addcontentsline{loa}{algorithm}{\protect\numberline{\thealgorithm}##2}%
\else 
\addcontentsline{loa}{algorithm}{\protect\numberline{\thealgorithm}##1}%
\fi
\kern2pt\hrule\kern2pt
}
}{
\kern2pt\hrule\relax
\end{center}
}
\makeatother
\newtheorem{theorem}{Theorem}[section]
\newtheorem{lemma}{Lemma}[section]
\newtheorem{corollary}{Corollary}[section]
\newtheorem{remark}{Remark}[section]

\newtheorem{proposition}{Proposition}[section]
\newtheorem{assumption}{Assumption}

\allowdisplaybreaks 

\numberwithin{equation}{section}

\begin{document}
\title{Regularity Analysis and Tensor Neural Network Methods for Quasiperiodic Elliptic Equations\footnote{This work was supported by the Strategic 
Priority Research Program of the Chinese Academy of Sciences 
(XDA0480504, XDB0620203, XDB0640000, XDB0640300), National Key
Research and Development Program of China (2023YFB3309104), 
National Key Laboratory of Computational Physics (6142A05230501), 
National Natural Science Foundations of China (1233000214),
Science Challenge Project (TZ2025007), and National Center for 
Mathematics and Interdisciplinary Science, CAS.}}
\author{Jingze Ren\footnote{School of Mathematics, Jilin University, Changchun, 
130012, Jilin, China (renjz23@mails.jlu.edu.cn)},\ \ \ 
Yifan Wang\footnote{School of Mathematical Sciences, Peking University, Beijing 100871, 
China (wangyifan1994@pku.edu.cn)},\ \ \ 
Hehu Xie\footnote{SKLMS, NCMIS, Institute of Computational Mathematics, 
Academy of Mathematics and Systems Science, Chinese Academy of
Sciences, No.55, Zhongguancun Donglu, Beijing 100190, China, and School of
Mathematical Sciences, University of Chinese Academy of Sciences, 
Beijing, 100049 (hhxie@lsec.cc.ac.cn)}\ \ \
and \ \ Qilong Zhai\footnote{School of Mathematics, Jilin University, Changchun, 
130012, Jilin, China (zhaiql@jlu.edu.cn)} }
\date{}
\maketitle

\begin{abstract}
In this paper, we propose a novel machine learning method based on an adaptive 
tensor neural network subspace for solving quasiperiodic elliptic problems. 
To this end, we first provide a theoretical analysis of the associated quasiperiodic 
and periodic function spaces and establish regularity estimates 
for the quasiperiodic elliptic problems. In particular, 
under the Diophantine condition, we derive a suitable condition 
on the source term to guarantee the regularity of the solution,  
which provides a theoretical basis for the design of numerical schemes. 
An efficient numerical method is then designed by combining the 
projection method with tensor neural networks. Leveraging the special 
structure of tensor neural networks, high-dimensional integration 
can be performed directly and with high accuracy, without relying on 
Monte Carlo methods. Finally, several numerical experiments are 
presented to demonstrate the accuracy and efficiency of the proposed method.

\vskip0.3cm {\bf Keywords.} Quasiperiodic elliptic problems, 
quasiperiodic function spaces, 
projection method, Sobolev regularity improvement, 
Diophantine condition, tensor neural network 
\vskip0.2cm {\bf AMS subject classifications.} 65N25, 65L15, 65B99, 68T07
\end{abstract}

\section{Introduction}

In recent years, partial differential equations (PDEs) with quasiperiodic coefficients have played an increasingly important role 
in describing intriguing phenomena in physics and materials science. 
Quasicrystals are a class of aperiodic space-filling structures, 
which share certain features with periodic systems, such as crystals and periodic tilings, and exhibit long-range order. 
However, due to the absence of translational symmetry and spatial decay, quasiperiodic PDEs are naturally posed in the whole space. 
As a result, developing effective numerical methods for PDEs with quasiperiodic coefficients has become an active area of research. 
Several numerical approaches have been explored to tackle this problem.
One of the most commonly used methods is the periodic approximation method (PAM), which approximates quasiperiodic functions 
by periodic ones and thereby imposes periodic boundary conditions on quasiperiodic PDEs.
However, the accuracy of PAM is strongly influenced by the Diophantine error \cite{JiangLiZhang_2025}. 
In the quasiperiodic corrector problem, the filtering method (FM) approximates whole-space quasiperiodic elliptic PDEs by restricting them
to a finite computational domain and using a filter function whose derivatives vanish to all orders at the boundary.
This construction naturally induces modified homogeneous boundary conditions.
From a numerical performance perspective, although this method can suppress oscillations, 
it still fails to eliminate the influence of Diophantine errors \cite{BlancBris}.


Another approach is a direct method to study the quasicrystals or aperiodic crystals in the hyperspace, 
called the higher-dimensional approach. 
From this approach, a quasiperiodic structure can be viewed as a periodic structure by extending it into a higher-dimensional space.
Its symmetries can be expressed in terms of the conventional point groups and space groups of higher-dimensional 
periodic crystals \cite{JiangZhang_2015,SiLiZhangShiJiang}. 
In the higher-dimensional description, quasiperiodic structures result from irrational physical-space cuts of appropriate 
periodic hypercrystal structures.  It is the so-called cut-and-project method. 
The higher-dimensional approach reveals the hidden structural correlations.
To implement the higher-dimensional approach in the direct space, one must know the discrete lattice arrangements 
of higher-dimensional periodic structures.
The embedding spaces of $d$-dimensional quasiperiodic structures are abstract spaces whose dimensions are more than three.  

Recently, the projection method (PM) \cite{JiangZhang_2014,JiangZhang_2018} provides an effective framework for transforming quasiperiodic 
problems posed in the whole space into high-dimensional periodic problems.
Essentially, the PM technique reveals that the quasiperiodic characteristics can be defined on an irrational manifold in a higher-dimensional torus.
So far, the PM has been combined with the spectral collocation method and discrete Fourier-Bohr transformation to approximate quasiperiodic functions. 
By computing higher-dimensional periodic parent functions and incorporating them with a projection matrix, 
one can obtain approximations to quasiperiodic functions. 
The PM, combined with spectral collocation methods, has been applied to many quasiperiodic problems, 
such as \cite{BarkanEngelLifshitz, JiangZhang_2015, SiLiZhangShiJiang}, 
incommensurate quantum systems \cite{GaoXuYangYe,LiJiang}, topological insulators \cite{WangLiuHuang}, 
and grain boundaries \cite{CaoShenXu, JiangSiXu}. 
The PM transforms a lower-dimensional quasiperiodic elliptic problem into a higher-dimensional periodic problem, 
which brings significant challenges for classical numerical methods. 
Furthermore, it is easy to find that the deduced high-dimensional periodic problem has no the positive definiteness 
in the classical Sobolev space. This property brings difficulty to design numerical methods which has rigorous convergence rate.  
The main motivation of this paper is therefore to develop proper regularity analysis and 
efficient machine-learning-based algorithms for the quasiperiodic elliptic problem. 



Over the last few years, machine learning methods based on neural network (NN) to solve PDEs have attracted 
increasing attention in the computational mathematics and scientific computing communities.
Several well-known NN-based methods have been developed, including the deep Ritz \cite{EYu}, 
deep Galerkin method \cite{DGM}, PINN \cite{RaissiPerdikarisKarniadakis}, and weak adversarial networks \cite{WAN} 
for solving PDEs by designing different loss functions. 
In these methods, the loss functions typically involve the numerical integration of neural network-defined functions.
Direct numerical integration of high-dimensional functions is always subject to the ``curse of dimensionality''. 
In practice, such high-dimensional integrations are often computed using Monte Carlo methods and various sampling strategies \cite{EYu}.
However, due to the low convergence rate of Monte Carlo methods, achieving high accuracy and stable convergence remains challenging.

To improve the accuracy of high-dimensional integration, we previously proposed a type of tensor neural network (TNN) 
and the corresponding machine learning method to solve high-dimensional problems with high accuracy \cite{WangJinXie, WangLinLiaoLiuXie, WangLiaoXie}.
The key advantage of the TNN-based method is that the high-dimensional integration of TNN in the loss functions 
can be transformed into one-dimensional integrations, so that highly accurate classical quadrature rules can be implemented.
The TNN-based machine learning method has already been used to solve high-dimensional eigenvalue and boundary value problems 
using Ritz-type loss functions \cite{WangJinXie}. 
Furthermore, in \cite{WangXie}, it was shown that multiple eigenpairs can be computed by combining TNNs with the Rayleigh-Ritz process.
These results demonstrate the strong potential of TNN-based machine learning methods for solving high-dimensional problems.

The aim of this paper is to develop a TNN-based machine learning method for solving quasiperiodic elliptic problems.
To this end, we first investigate the properties of quasiperiodic function spaces and periodic function spaces, 
analyze the embedding relations between quasiperiodic function spaces and standard Sobolev spaces.
Based on these results and the Diophantine condition, we establish the regularity improvement 
in the Sobolev space for quasiperiodic elliptic problems.
These results provide the theoretical foundation for the design of numerical schemes for quasiperiodic elliptic problems.
With the help of the projection method, the low-dimensional quasiperiodic problem is transformed into a higher-dimensional 
periodic problem, which is then solved by the TNN-based machine learning method.
An approximation to the original quasiperiodic problem is then obtained by applying the projection process to 
the high-dimensional TNN function.
In order to validate the effectiveness of the proposed numerical algorithm, we present comprehensive numerical 
experiments for various quasiperiodic elliptic equations.
These results demonstrate the remarkable efficiency of our method and indicate that the method has strong potential 
for broader quasiperiodic systems.

An outline of the paper is as follows. 
In Section \ref{Section_Projection}, we introduce the quasiperiodic and periodic function spaces. 
Section \ref{Section_Regularity} is devoted to the regularity analysis of quasiperiodic elliptic problems.
In Section \ref{Section_TNN_ML}, we develop a    
TNN-based machine learning method
will be proposed to solve the resulting high-dimensional periodic problem.  
Section \ref{Section_Numerical_Examples} contains numerical examples illustrating the accuracy and efficiency of the proposed method. 
The paper ends with some concluding remarks.


\section{Quasiperiodic and periodic function spaces}\label{Section_Projection}
In this section, we introduce the low-dimensional quasiperiodic function spaces and 
the corresponding high-dimensional periodic function spaces. The isometric 
isomorphism between these two function spaces is also investigated. 
These function spaces and their relationship provide the foundation for the
regularity analysis of quasiperiodic elliptic problems in the next section.
\subsection{Quasiperiodic function spaces}
This subsection is devoted to a special class of almost periodic functions 
whose frequency module is generated by a matrix $P$ 
with $\mathbb Q$-linearly independent columns.

Let
\begin{eqnarray*}
\mathbb{P}^{d \times n} := \Big\{P
= (\boldsymbol{p}_1, \cdots, \boldsymbol{p}_n) \in \mathbb{R}^{d \times n}: \boldsymbol{p}_1, 
\cdots, \boldsymbol{p}_n \text{ are }  \mathbb{Q}\text{-linearly independent}\Big\},
\end{eqnarray*}
where a set of vectors is said to be $\mathbb{Q}$-linearly independent if, for any rational numbers 
$\mu_1, \ldots, \mu_n$, the equation $\mu_1 \boldsymbol{p}_1+\cdots+\mu_n \boldsymbol{p}_n=0$ implies 
$\mu_1=\cdots=\mu_n=0$.

For $P\in \mathbb{P}^{d \times n}$, we call $P$ a projection matrix and define the trigonometric polynomial 
space by the frequency modulus $\Lambda_P:= \{P\boldsymbol{k}:\boldsymbol{k}\in\mathbb Z^n\}$ as follows
\begin{eqnarray*}
\mathcal{T}_{P}:=\left\{u(\boldsymbol{x})
=\sum_{\boldsymbol{k}\in \mathcal K} u_{\boldsymbol{k}} e^{{\rm i}(P
\boldsymbol{k})^\top \boldsymbol{x}}: \mathcal K\subset \mathbb Z^n, {\rm crad}(\mathcal K)<\infty   \right\},
\end{eqnarray*}
where ${\rm crad}(\mathcal K)$ denotes the cardinal of set $\mathcal K$.
The functions in $\mathcal{T}_{P}$ are basic smooth quasiperiodic functions. Note that each $u\in \mathcal{T}_{P}$ 
can be written as $u(\boldsymbol{x})=U(P^\top \boldsymbol{x})$, where $U$ is a $n$-dimensional periodic function.

Let $K_T = \{\boldsymbol{x} = (x_1, \cdots, x_d) \in \mathbb{R}^d : |x_j| \leq T,\ j = 1, \cdots, d\}$.
The mean value of $f(\boldsymbol{x})$ is defined by
\begin{equation*}
\mathcal{M}\{f(\boldsymbol{x})\} = \lim_{T \to \infty} \frac{1}{(2T)^d} \int_{\boldsymbol{s} 
+ K_T} f(\boldsymbol{x}) d\boldsymbol{x},
\end{equation*}
where the limit on the right side exists uniformly for all $\boldsymbol{s} \in \mathbb{R}^d$ \cite{JiangLiZhang}.
An elementary calculation shows that for $\boldsymbol{\alpha},\boldsymbol{\beta}\in \mathbb R^d$ 
\begin{eqnarray}\label{B_P^2_orth}
\mathcal{M} \left(e^{{\rm i}\boldsymbol{\alpha}^\top \boldsymbol{x}} 
e^{-{\rm i}\boldsymbol{\beta}^\top \boldsymbol{x}}\right)=
\left\{
\begin{aligned}
1,\ \ \ \boldsymbol{\alpha}=\boldsymbol{\beta},\\
0,\ \ \ \boldsymbol{\alpha}\neq\boldsymbol{\beta}.
\end{aligned}
\right.
\end{eqnarray}

For any two quasiperiodic functions 
$u(\boldsymbol{x})=\sum_{\boldsymbol{k}\in \mathcal{K}_u} \widehat{u}_{\boldsymbol{k}} e^{{\rm i}(P\boldsymbol{k})^\top \boldsymbol{x}}$ and $v(\boldsymbol{x})
=\sum_{\boldsymbol{k}\in \mathcal{K}_v} \widehat{v}_{\boldsymbol{k}} 
e^{{\rm i}(P\boldsymbol{k})^\top \boldsymbol{x}}$ in $\mathcal{T}_{P}$, 
where $\mathcal K_u$ and $\mathcal K_v$ denote the frequency index sets of $u$ and $v$, respectively,
we define the following inner product
\begin{eqnarray*}
(u,v)_{B_P^2}:=\mathcal{M}(u\overline{v})=\sum_{\boldsymbol{k}\in \mathcal K_u\cap 
\mathcal K_v}\widehat{u}_{\boldsymbol{k}} \overline{\widehat{v}}_{\boldsymbol{k}},
\end{eqnarray*}
and the corresponding induced norm
\begin{eqnarray*}
\|u\|_{B_P^2}^2:=\mathcal{M}(|u|^2)=\sum_{\boldsymbol{k}\in \mathcal K_u}|\widehat{u}_{\boldsymbol{k}}|^2.
\end{eqnarray*}
With the help of the norm $\|\cdot\|_{B_P^2}$, 
we define the Besicovitch space where the spectrum is fixed 
as the column vectors of $P$ as follows
\begin{eqnarray*}
B_P^2(\mathbb R^d):= \overline{\mathcal{T}_{P}}^{\|\cdot\|_{B_P^2}},
\end{eqnarray*}
This definition has the following equivalent form
\begin{eqnarray*}
B_P^2(\mathbb R^d):= \left\{u(\boldsymbol{x})=\sum_{\boldsymbol{k}\in \mathbb Z^n} 
\widehat{u}_{\boldsymbol{k}} e^{{\rm i}(P\boldsymbol{k})^\top \boldsymbol{x}}: 
\sum_{\boldsymbol{k}\in \mathbb Z^n}|\widehat{u}_{\boldsymbol{k}}|^2<\infty   \right\},
\end{eqnarray*}
where the coefficient $\widehat{u}_{\boldsymbol{k}},\boldsymbol{k}\in\mathbb Z^n$ 
is called Bohr coefficient and can be obtained by the following Bohr transformation
\begin{eqnarray*}
\widehat{u}_{\boldsymbol{k}}=\mathcal{M}\left(u(\boldsymbol{x})
e^{-{\rm i}(P\boldsymbol{k})^\top \boldsymbol{x}}\right),\ \ \ \boldsymbol{k}\in\mathbb Z^n.
\end{eqnarray*}
We also define the subspace of $B_P^2(\mathbb R^d)$ consisting of functions with zero mean
\begin{eqnarray*}
B_{P,0}^2(\mathbb R^d)&:=&\left\{u\in B_P^2(\mathbb R^d):\mathcal{M}(u)=0  \right\} \\
&=&\left\{u(\boldsymbol{x})=\sum_{\boldsymbol{k}\in \mathbb Z^n\setminus\{\boldsymbol 0\}} 
\widehat{u}_{\boldsymbol{k}} e^{{\rm i}(P\boldsymbol{k})^\top \boldsymbol{x}}: \widehat{u}_{\boldsymbol{0}}=0, 
\sum_{\boldsymbol{k}\in \mathbb Z^n\setminus\{\boldsymbol 0\}}|\widehat{u}_{\boldsymbol{k}}|^2<\infty   \right\}.
\end{eqnarray*}
The Besicovitch space $B_P^2(\mathbb R^d)$ and $B_{P,0}^2(\mathbb R^d)$ are the most natural square integrable space for quasiperiodic problems.
In this quasiperiodic sense, a space with Sobolev regularity is naturally defined as follows
\begin{eqnarray*}
H_P^s(\mathbb R^d):= \left\{u(\boldsymbol{x})=\sum_{\boldsymbol{k}\in \mathbb Z^n} 
\widehat{u}_{\boldsymbol{k}} e^{{\rm i}(P\boldsymbol{k})^\top \boldsymbol{x}}: 
\sum_{\boldsymbol{k}\in \mathbb Z^n}(1+|P\boldsymbol{k}|^2)^s  |\widehat{u}_{\boldsymbol{k}}|^2<\infty   \right\},
\end{eqnarray*}
where $|\boldsymbol{x}|^2=\sum_{i=1}^d |x_i|^2$.
Our subsequent analysis requires the following homogeneous space
\begin{eqnarray*}
\overline{H}_P^s(\mathbb R^d):= \left\{u(\boldsymbol{x})=\sum_{\boldsymbol{k}\in \mathbb Z^n} 
\widehat{u}_{\boldsymbol{k}} e^{{\rm i}(P\boldsymbol{k})^\top \boldsymbol{x}}: 
\sum_{\boldsymbol{k}\in \mathbb Z^n}|P\boldsymbol{k}|^{2s}  |\widehat{u}_{\boldsymbol{k}}|^2<\infty   \right\},
\end{eqnarray*}
and the corresponding zero-mean subspaces 
$H_{P,0}^s(\mathbb R^d):=\left\{u\in H_P^s(\mathbb R^d):\mathcal{M}(u)=0  \right\}$ 
and $\overline{H}_{P,0}^s(\mathbb R^d):=\left\{u\in \overline{H}_P^s(\mathbb R^d):\mathcal{M}(u)=0  \right\}$.

For $u(\boldsymbol{x})=\sum_{\boldsymbol{k}\in \mathbb Z^n} \widehat{u}_{\boldsymbol{k}} e^{{\rm i}(P\boldsymbol{k})^\top \boldsymbol{x}}$
and
$v(\boldsymbol{x})=\sum_{\boldsymbol{k}\in \mathbb Z^n} \widehat{v}_{\boldsymbol{k}} e^{{\rm i}(P\boldsymbol{k})^\top \boldsymbol{x}}$,
we define the inner product on $H_P^s(\mathbb R^d)$ as
\begin{eqnarray*}
(u,v)_{H_P^s}
:=
\sum_{\boldsymbol{k}\in \mathbb Z^n}
(1+|P\boldsymbol{k}|^2)^s
\widehat{u}_{\boldsymbol{k}}\overline{\widehat{v}}_{\boldsymbol{k}},
\end{eqnarray*}
and the corresponding induced norm
\begin{eqnarray*}
\|u\|_{H_P^s}
:= \left(\sum_{\boldsymbol{k}\in \mathbb Z^n}
(1+|P\boldsymbol{k}|^2)^s
|\widehat{u}_{\boldsymbol{k}}|^2\right)^{1/2}.
\end{eqnarray*}
The zero-mean subspace $H_{P,0}^s(\mathbb R^d)$ 
is endowed with the inner products and norms induced 
by those of $H_P^s(\mathbb R^d)$.
Equivalently, the corresponding sums may be taken 
over $\mathbb Z^n\setminus\{\boldsymbol 0\}$.

Similarly, the zero-mean homogeneous space $\overline{H}_{P,0}^s(\mathbb R^d)$ is equipped with the inner product
\begin{eqnarray*}
(u,v)_{\overline{H}_{P,0}^s(\mathbb R^d)}
:=\sum_{\boldsymbol{k}\in \mathbb Z^n\setminus\{\boldsymbol 0\}}
|P\boldsymbol{k}|^{2s}
\widehat{u}_{\boldsymbol{k}}\overline{\widehat{v}_{\boldsymbol{k}}},
\end{eqnarray*}
and the norm
\begin{eqnarray*}
\|u\|_{\overline{H}_{P,0}^s(\mathbb R^d)}
:=
\left(
\sum_{\boldsymbol{k}\in \mathbb Z^n\setminus\{\boldsymbol 0\}}
|P\boldsymbol{k}|^{2s}
|\widehat{u}_{\boldsymbol{k}}|^2
\right)^{1/2}.
\end{eqnarray*}

Besides the physical regularity measured by the projected 
frequencies $P\boldsymbol{k}$, we also introduce spaces 
measuring the regularity of the generating function on $\mathbb T^n$. 
\begin{eqnarray*}
H^s(\mathbb R^d):= \left\{u(\boldsymbol{x})=\sum_{\boldsymbol{k}\in \mathbb Z^n} \widehat{u}_{\boldsymbol{k}} 
e^{{\rm i}(P\boldsymbol{k})^\top \boldsymbol{x}}: \sum_{\boldsymbol{k}\in \mathbb Z^n}(1+|\boldsymbol{k}|^2)^s  
|\widehat{u}_{\boldsymbol{k}}|^2<\infty   \right\},
\end{eqnarray*}
Similarly, we define the corresponding homogeneous space by
\begin{eqnarray*}
\overline{H}^s(\mathbb R^d):= \left\{u(\boldsymbol{x})=\sum_{\boldsymbol{k}\in \mathbb Z^n} \widehat{u}_{\boldsymbol{k}} 
e^{{\rm i}(P\boldsymbol{k})^\top \boldsymbol{x}}: \sum_{\boldsymbol{k}\in \mathbb Z^n}|\boldsymbol{k}|^{2s}  
|\widehat{u}_{\boldsymbol{k}}|^2<\infty   \right\},
\end{eqnarray*}
and the corresponding zero-mean spaces $H_{0}^s(\mathbb R^d):=\left\{u\in H^s(\mathbb R^d):\mathcal{M}(u)=0  \right\}$ 
and $\overline{H}_{0}^s(\mathbb R^d):=\left\{u\in \overline{H}^s(\mathbb R^d):\mathcal{M}(u)=0  \right\}$.

For
$u(\boldsymbol{x})=\sum_{\boldsymbol{k}\in \mathbb Z^n} \widehat{u}_{\boldsymbol{k}} e^{{\rm i}(P\boldsymbol{k})^\top \boldsymbol{x}}$
and
$v(\boldsymbol{x})=\sum_{\boldsymbol{k}\in \mathbb Z^n} \widehat{v}_{\boldsymbol{k}} e^{{\rm i}(P\boldsymbol{k})^\top \boldsymbol{x}}$,
we define the inner product on $H^s(\mathbb R^d)$ 
\begin{eqnarray*}
(u,v)_{H^s}
:=
\sum_{\boldsymbol{k}\in \mathbb Z^n}
(1+|\boldsymbol{k}|^2)^s
\widehat{u}_{\boldsymbol{k}}\overline{\widehat{v}_{\boldsymbol{k}}},
\end{eqnarray*}
and the induced norm 
\begin{eqnarray*}
\|u\|_{H^s}
:=
\left(
\sum_{\boldsymbol{k}\in \mathbb Z^n}
(1+|\boldsymbol{k}|^2)^s
|\widehat{u}_{\boldsymbol{k}}|^2
\right)^{1/2}.
\end{eqnarray*}
The zero-mean space $H_0^s(\mathbb R^d)$ is equipped with the inner product and norm induced by those of $H^s(\mathbb R^d)$.
Equivalently, with the above sums restricted to $\mathbb Z^n\setminus\{\boldsymbol 0\}$.

Similarly, the zero-mean homogeneous space $\overline{H}^s_0(\mathbb R^d)$ is equipped with the inner product
\begin{eqnarray*}
(u,v)_{\overline{H}^s_0}
:=
\sum_{\boldsymbol{k}\in \mathbb Z^n\setminus\{\boldsymbol 0\}}
|\boldsymbol{k}|^{2s}
\widehat{u}_{\boldsymbol{k}}\overline{\widehat{v}_{\boldsymbol{k}}},
\end{eqnarray*}
and the corresponding norm
\begin{eqnarray*}
\|u\|_{\overline{H}^s_0}
:=
\left(
\sum_{\boldsymbol{k}\in \mathbb Z^n\setminus\{\boldsymbol 0\}}
|\boldsymbol{k}|^{2s}
|\widehat{u}_{\boldsymbol{k}}|^2
\right)^{1/2}.
\end{eqnarray*}
\begin{remark}
The spaces $H^s(\mathbb R^d)$ and $H_{P}^s(\mathbb R^d)$ encode two different notions 
of regularity. $H_{P}^s(\mathbb R^d)$ measures the Sobolev regularity with respect 
to the physical variable $x\in \mathbb{R}^d$.
As will be seen from the following analysis of the pullback mapping, 
$H^s(\mathbb R^d)$ measures the Sobolev regularity of the generating function 
on $\mathbb{T}^n$.
\end{remark}

\subsection{Periodic function spaces}
This subsection introduces periodic function spaces in high-dimensional settings.
In particular, the projected periodic function space is introduced as a suitable framework for quasiperiodic elliptic problems. 
Let $\mathbb T^n=(\mathbb R/2\pi\mathbb Z)^n$ be the $n$-dimensional torus.
We first introduce several basic function spaces on $\mathbb T^n$.
Define the periodic Hilbert space $L^2(\mathbb T^n)$
\begin{eqnarray*}
L^2(\mathbb T^n):=\left\{U(\boldsymbol{y})= \sum_{\boldsymbol{k}\in\mathbb Z^n} \widehat{U}_{\boldsymbol{k}} 
e^{{\rm i}\boldsymbol{k}^\top \boldsymbol{y}} : \sum_{\boldsymbol{k}\in\mathbb Z^n}|\widehat{U}_{\boldsymbol{k}}|^2<\infty \right\},
\end{eqnarray*}
equipped with the inner product
\[
(U,V)_{L^2}
:=
\sum_{\boldsymbol{k}\in\mathbb Z^n}
\widehat U_{\boldsymbol{k}}\overline{\widehat V_{\boldsymbol{k}}},
\]
and the deduced norm
\[
\|U\|_{L^2}
:=
\left(
\sum_{\boldsymbol{k}\in\mathbb Z^n}
|\widehat U_{\boldsymbol{k}}|^2
\right)^{1/2}.
\]
It is obvious that $L^2(\mathbb T^n)$ is 
the standard space of square-integrable periodic functions on $\mathbb T^n$.
The Sobolev space $H^s(\mathbb T^n)$ is defined as
\begin{eqnarray*}
H^s(\mathbb T^n):=\left\{U(\boldsymbol{y})= \sum_{\boldsymbol{k}\in\mathbb Z^n} 
\widehat{U}_{\boldsymbol{k}} e^{{\rm i}\boldsymbol{k}^\top \boldsymbol{y}} : \sum_{\boldsymbol{k}\in\mathbb Z^n} 
(1+|\boldsymbol{k}|^2)^s|\widehat{U}_{\boldsymbol{k}}|^2<\infty \right\},
\end{eqnarray*}
endowed with the inner product
\[
(U,V)_{H^s}
:= \sum_{\boldsymbol{k}\in\mathbb Z^n}
(1+|\boldsymbol{k}|^2)^s
\widehat U_{\boldsymbol{k}}\overline{\widehat V_{\boldsymbol{k}}},
\]
and the norm
\[
\|U\|_{H^s}
:=
\left(
\sum_{\boldsymbol{k}\in\mathbb Z^n}
(1+|\boldsymbol{k}|^2)^s
|\widehat U_{\boldsymbol{k}}|^2
\right)^{1/2}.
\]
Now we come to define a closed subspace of $H^s(\mathbb T^n)$
consisting of functions with zero mean, 
or equivalently, with vanishing zeroth Fourier coefficient as 
\begin{eqnarray*}
H_{0}^s(\mathbb T^n):=\left\{U(\boldsymbol{y})= \sum_{\boldsymbol{k}\in\mathbb Z^n\setminus\{\boldsymbol 0\} } 
\widehat{U}_{\boldsymbol{k}} e^{{\rm i}\boldsymbol{k}^\top \boldsymbol{y}} : \sum_{\boldsymbol{k}\in\mathbb Z^n\setminus\{\boldsymbol 0\} } 
(1+|\boldsymbol{k}|^2)^s|\widehat{U}_{\boldsymbol{k}}|^2<\infty \right\}.
\end{eqnarray*}
The periodic space $H_0^s(\mathbb T^n)$ is a Hilbert space equipped 
with the inner product and norm deduced by those of $H^s(\mathbb T^n)$.

We further define the zero-mean homogeneous Sobolev space 
\begin{eqnarray*}
\overline{H}_{0}^s(\mathbb T^n):=\left\{U(\boldsymbol{y})= \sum_{\boldsymbol{k}\in\mathbb Z^n\setminus\{\boldsymbol 0\}} 
\widehat{U}_{\boldsymbol{k}} e^{{\rm i}\boldsymbol{k}^\top \boldsymbol{y}} : \sum_{\boldsymbol{k}\in\mathbb Z^n\setminus\{\boldsymbol 0\}} 
|\boldsymbol{k}|^{2s}|\widehat{U}_{\boldsymbol{k}}|^2<\infty \right\}.
\end{eqnarray*}
Then, $\overline H_0^s(\mathbb T^n)$ is a Hilbert space with inner product
\[
(U,V)_{\overline H_0^s}
:=
\sum_{\boldsymbol{k}\in\mathbb Z^n\setminus\{\boldsymbol 0\}}
|\boldsymbol{k}|^{2s}
\widehat U_{\boldsymbol{k}}\overline{\widehat V_{\boldsymbol{k}}},
\]
and the corresponding norm
\[
\|U\|_{\overline H_0^s}
:=
\left(
\sum_{\boldsymbol{k}\in\mathbb Z^n\setminus\{\boldsymbol 0\}}
|\boldsymbol{k}|^{2s}
|\widehat U_{\boldsymbol{k}}|^2
\right)^{1/2}.
\]

We then introduce several function spaces associated with the quasiperiodic problem. 
These will be referred to as the $P$-projected Sobolev-type spaces. We begin with
\begin{eqnarray*}
H_{P}^s(\mathbb T^n):=\left\{U(\boldsymbol{y})= \sum_{\boldsymbol{k}\in\mathbb Z^n} \widehat{U}_{\boldsymbol{k}} e^{{\rm i}
\boldsymbol{k}^\top \boldsymbol{y}} : \sum_{\boldsymbol{k}\in\mathbb Z^n} (1+|P\boldsymbol{k}|^2)^s|\widehat{U}_{\boldsymbol{k}}|^2<\infty \right\},
\end{eqnarray*}
which is a Hilbert space endowed with the inner product
\[
(U,V)_{H_{P}^s}
:=\sum_{\boldsymbol{k}\in\mathbb Z^n}
(1+|P\boldsymbol{k}|^2)^s
\widehat U_{\boldsymbol{k}}\overline{\widehat V_{\boldsymbol{k}}},
\]
and norm
\[
\|U\|_{H_{P}^s}
:=
\left(
\sum_{\boldsymbol{k}\in\mathbb Z^n}
(1+|P\boldsymbol{k}|^2)^s
|\widehat U_{\boldsymbol{k}}|^2
\right)^{1/2}.
\]
This space measures regularity through the projected frequencies $P\boldsymbol{k}$ rather than the frequencies $\boldsymbol{k}$.
Define the zero-mean subspace of $H_{P}^s(\mathbb T^n)$ as follows
\begin{eqnarray*}
H_{P,0}^s(\mathbb T^n):=\left\{U(\boldsymbol{y})= \sum_{\boldsymbol{k}\in\mathbb Z^n\setminus\{\boldsymbol 0\}} 
\widehat{U}_{\boldsymbol{k}} e^{{\rm i}\boldsymbol{k}^\top \boldsymbol{y}} : 
\sum_{\boldsymbol{k}\in\mathbb Z^n\setminus\{\boldsymbol 0\}}(1+|P\boldsymbol{k}|^2)^s|\widehat{U}_{\boldsymbol{k}}|^2<\infty \right\}.
\end{eqnarray*}
Then, $H_{P,0}^s(\mathbb T^n)$ is a Hilbert space equipped
with the inner product and norm induced by those of $H_{P}^s(\mathbb T^n)$.

We further define the homogeneous counterpart of $H_{P,0}^s(\mathbb T^n)$, 
through the Fourier weight $|P\boldsymbol{k}|^{2s}$ as follows
\begin{eqnarray*}
\overline{H}_{P,0}^s(\mathbb T^n):=\left\{U(\boldsymbol{y})= \sum_{\boldsymbol{k}\in\mathbb Z^n\setminus\{\boldsymbol 0\}} 
\widehat{U}_{\boldsymbol{k}} e^{{\rm i}\boldsymbol{k}^\top \boldsymbol{y}} : 
\sum_{\boldsymbol{k}\in\mathbb Z^n\setminus\{\boldsymbol 0\}}|P\boldsymbol{k}|^{2s}|\widehat{U}_{\boldsymbol{k}}|^2<\infty \right\},
\end{eqnarray*}
which is a Hilbert space endowed with the inner product
\[
(U,V)_{\overline H_{P,0}^s}
:=
\sum_{\boldsymbol{k}\in\mathbb Z^n\setminus\{\boldsymbol 0\}}
|P\boldsymbol{k}|^{2s}
\widehat U_{\boldsymbol{k}}\overline{\widehat V_{\boldsymbol{k}}},
\]
and the corresponding norm
\[
\|U\|_{\overline H_{P,0}^s}
:=
\left(
\sum_{\boldsymbol{k}\in\mathbb Z^n\setminus\{\boldsymbol 0\}}
|P\boldsymbol{k}|^{2s}
|\widehat U_{\boldsymbol{k}}|^2
\right)^{1/2}.
\]
The space $\overline H_{P,0}^s(\mathbb T^n)$ plays a fundamental role in the analysis of the quasiperiodic problems.
This space arises naturally when a low-dimensional quasiperiodic problem is transformed into the corresponding high-dimensional periodic problem.
It also serves as the natural energy space for the resulting high-dimensional periodic problem. 
For this reason, the remainder of this subsection is devoted to the study of the properties of $\overline H_{P,0}^s(\mathbb T^n)$. 
\begin{lemma}
For any $s\in\mathbb R$, $\overline H_{P,0}^{-s}(\mathbb T^n)\cong \big(\overline{H}_{P,0}^{s}(\mathbb T^n)\big)^\prime$.
\end{lemma}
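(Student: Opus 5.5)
The plan is to identify both spaces with weighted $\ell^2$ sequence spaces on $\mathbb Z^n\setminus\{\boldsymbol 0\}$ and then use the standard duality of such spaces via the unweighted $\ell^2$ pairing.

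First I record a fact that makes the weights meaningful. Since the columns of $P$ are $\mathbb Q$-linearly independent, $P\boldsymbol k\neq \boldsymbol 0$ for every $\boldsymbol k\in\mathbb Z^n\setminus\{\boldsymbol 0\}$; otherwise $\sum_j k_j\boldsymbol p_j=0$ would be a nontrivial rational relation. Hence the weight $w_{\boldsymbol k}:=|P\boldsymbol k|^{2s}$ is strictly positive and finite for every $s\in\mathbb R$. The map $U\mapsto (\widehat U_{\boldsymbol k})$ is then an isometric isomorphism from $\overline H_{P,0}^s(\mathbb T^n)$ onto $\ell^2_{w}:=\{(a_{\boldsymbol k}):\sum w_{\boldsymbol k}|a_{\boldsymbol k}|^2<\infty\}$. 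Likewise $\overline H_{P,0}^{-s}(\mathbb T^n)\cong\ell^2_{1/w}$. For negative exponents, elements are understood as formal Fourier series with the stated summability; this is implicit in the definition.

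Next I define the pairing. For $F\in\overline H_{P,0}^{-s}(\mathbb T^n)$, set
\[
\langle F,U\rangle:=\sum_{\boldsymbol k\neq\boldsymbol 0}\widehat F_{\boldsymbol k}\overline{\widehat U_{\boldsymbol k}}
=\sum_{\boldsymbol k\neq\boldsymbol 0}\big(|P\boldsymbol k|^{-s}\widehat F_{\boldsymbol k}\big)\big(|P\boldsymbol k|^{s}\overline{\widehat U_{\boldsymbol k}}\big).
\]
Cauchy–Schwarz shows the series converges absolutely and $|\langle F,U\rangle|\le\|F\|_{\overline H_{P,0}^{-s}}\|U\|_{\overline H_{P,0}^{s}}$. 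So $J:F\mapsto\langle F,\cdot\rangle$ is a (conjugate-)linear bounded map into the dual, with $\|JF\|\le\|F\|$. One can use the bilinear version instead if a linear identification is preferred.

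Then I show $J$ is an isometry. Given $F$, take the test function with coefficients $\widehat U_{\boldsymbol k}=|P\boldsymbol k|^{-2s}\widehat F_{\boldsymbol k}$. Then $\|U\|_{\overline H^s_{P,0}}=\|F\|_{\overline H^{-s}_{P,0}}$ and $\langle F,U\rangle=\|F\|^2_{\overline H^{-s}_{P,0}}$, which gives $\|JF\|=\|F\|$.

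Finally I show $J$ is surjective. Given $\ell\in(\overline H_{P,0}^{s})'$, the Riesz representation theorem on the Hilbert space $\overline H_{P,0}^s(\mathbb T^n)$ gives $V$ with $\ell(U)=(U,V)_{\overline H_{P,0}^s}$, up to conjugation conventions. I then define $\widehat F_{\boldsymbol k}:=|P\boldsymbol k|^{2s}\widehat V_{\boldsymbol k}$. A direct check gives $\sum|P\boldsymbol k|^{-2s}|\widehat F_{\boldsymbol k}|^2=\|V\|^2_{\overline H^s_{P,0}}<\infty$, so $F\in\overline H^{-s}_{P,0}$, and $JF=\ell$.

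The main point requiring care is completeness of $\overline H_{P,0}^s(\mathbb T^n)$, which Riesz needs. Since $\inf_{\boldsymbol k\ne \boldsymbol 0}|P\boldsymbol k|=0$ generically, this space is not comparable to the usual Sobolev spaces, and completeness cannot be borrowed from them. It does follow directly from the isometry with $\ell^2_w$, because any weighted $\ell^2$ space with positive weights is complete. The positivity of the weights is exactly where $\mathbb Q$-linear independence enters, so I would verify that step explicitly.
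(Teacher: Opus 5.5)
Your proposal is correct and follows essentially the same route as the paper: the Fourier-coefficient pairing, the Cauchy--Schwarz bound, norm attainment with the test function whose coefficients are $|P\boldsymbol k|^{-2s}\widehat F_{\boldsymbol k}$, and surjectivity via Riesz with $\widehat F_{\boldsymbol k}=|P\boldsymbol k|^{2s}\widehat V_{\boldsymbol k}$. Your explicit checks that $P\boldsymbol k\neq\boldsymbol 0$ and that the space is complete are useful additions the paper leaves implicit; note also that the paper's pairing $\sum\overline{\widehat F_{\boldsymbol k}}\widehat U_{\boldsymbol k}$ (linear in $U$) makes the Riesz step match exactly, whereas your $\sum\widehat F_{\boldsymbol k}\overline{\widehat U_{\boldsymbol k}}$ yields $JF=\overline{\ell}$ unless you conjugate $\widehat V_{\boldsymbol k}$ or switch conventions, as you indicated.
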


\begin{proof} 
Let us define the operator $\mathcal T$ as follows 
\begin{eqnarray*}
(\mathcal{T}F)(U):=\sum_{\boldsymbol{k}\in \mathbb Z^n\setminus\{\boldsymbol 0\}} \overline{\widehat{F}_{\boldsymbol{k}}}\widehat{U}_{\boldsymbol{k}}. 
\end{eqnarray*}
Then we will show that the operator $\mathcal T$ defines an isometric isomorphism between  
$\overline{H}_{P,0}^{-s}(\mathbb T^n)$ and  
$\big(\overline{H}_{P,0}^{s}(\mathbb T^n)\big)^\prime$. 

First, we prove that $\mathcal{T}F\in \big(\overline{H}_{P,0}^{s}(\mathbb T^n)\big)^\prime$ 
for any $F\in \overline{H}_{P,0}^{-s}(\mathbb T^n)$. 
For any $U\in \overline{H}_{P,0}^{s}(\mathbb T^n)$, the following inequalities hold 
\begin{eqnarray*}
|(\mathcal{T}F)(U)|&=&\sum_{\boldsymbol{k}\in \mathbb Z^n\setminus\{\boldsymbol 0\}}\overline{\widehat{F}_{\boldsymbol{k}}}\widehat{U}_{\boldsymbol{k}}
=\sum_{\boldsymbol{k}\in \mathbb Z^n\setminus\{\boldsymbol 0\}}\left(|P\boldsymbol{k}|^{-s}
\overline{\widehat{F}}_{\boldsymbol{k}}\right) \left(|P\boldsymbol{k}|^s\widehat{U}_{\boldsymbol{k}}\right)\\
&\leq&\left(\sum_{\boldsymbol{k}\in \mathbb Z^n\setminus\{\boldsymbol 0\}}|P\boldsymbol{k}|^{-2s}
\Big|\overline{\widehat{F}}_{\boldsymbol{k}}\Big|^2\right)^{\frac{1}{2}}
\left(\sum_{\boldsymbol{k}\in \mathbb Z^n\setminus\{\boldsymbol 0\}}|P\boldsymbol{k}|^{2s}\big|\widehat{U}_{\boldsymbol{k}}\big|^2\right)^{\frac{1}{2}}\\
&=&\|F\|_{\overline{H}_{P}^{-s}} \|U\|_{\overline{H}_{P}^{s}}.
\end{eqnarray*}
Then since $U$ is arbitrary, the desired boundedness follows
\begin{eqnarray*}
\|\mathcal{T}F\|_{\big(\overline{H}_{P,0}^{s}\big)^\prime}
=\sup_{0\neq U\in \overline{H}_{P,0}^{s}}
\frac{|(\mathcal{T}F)(U)|}{\|U\|_{\overline{H}_{P}^{s}}}
\leq \|F\|_{\overline{H}_{P}^{-s}}<\infty.
\end{eqnarray*}

Next, we prove that the upper bound of the above inequality can be obtained. 
Define $W=\sum_{\boldsymbol{k}\in \mathbb Z^n\setminus\{\boldsymbol 0\}}|P\boldsymbol{k}|^{-2s}
\widehat{F}_{\boldsymbol{k}}e^{{\rm i}\boldsymbol{k}^\top \boldsymbol{y}}$, 
it is easy to verify $\|W\|_{\overline{H}_{P}^{s}}=\|F\|_{\overline{H}_{P}^{-s}}<\infty$, and 
\begin{eqnarray*}
\|\mathcal{T}F\|_{\big(\overline{H}_{P,0}^{s}\big)^\prime} \geq \frac{|(\mathcal{T}F)(W)|}{\|W\|_{\overline{H}_{P}^{s}}}
= \|F\|_{\overline{H}_{P}^{-s}}.
\end{eqnarray*}

Finally, we prove that $\mathcal{T}$ is a surjection.
By the Riesz representation theorem, for any $\mathcal{L}\in \big(\overline{H}_{P,0}^{s}\big)^\prime$, 
there exists a unique $G\in \overline{H}_{P,0}^{s}$, such that
\begin{eqnarray*}
\mathcal{L}(U)=(U,G)_{\overline{H}_{P}^{s}}=\sum_{\boldsymbol{k}\in \mathbb Z^n\setminus\{\boldsymbol 0\}} 
|P\boldsymbol{k}|^{2s} \widehat{U}_{\boldsymbol{k}}\overline{\widehat{G}}_{\boldsymbol{k}}.
\end{eqnarray*}
Define Fourier coefficients of $F$ as follows
\begin{eqnarray*}
\widehat{F}_{0}=0,\ \ \ \widehat{F}_{\boldsymbol{k}}=|P\boldsymbol{k}|^{2s}\widehat{G}_{\boldsymbol{k}},
\ \ \ \boldsymbol{k}\in \mathbb Z^n\setminus\{\boldsymbol 0\}.
\end{eqnarray*}
It can be obtained through simple calculation that $\|F\|_{\overline{H}_{P}^{-s}}=\|G\|_{\overline{H}_{P}^{s}}<\infty$, 
and $(\mathcal{T}F)(U)=\mathcal{L}(U)$ for any $\forall U\in \overline{H}_{P,0}^{s}(\mathbb T^n)$. 
Then the proof is complete. 
\end{proof}
In the following discussion, for notational convenience, we introduce the symbol $\hookrightarrow$ to 
denote continuous embedding, and the symbol $\not\hookrightarrow$ to denote that no continuous embedding holds.

\begin{lemma}\label{not_embedding_L2}
For any \(s>0\), $\overline{H}_{P,0}^s(\mathbb T^n)\not\hookrightarrow L^2_{0}(\mathbb T^n)$.  
\end{lemma}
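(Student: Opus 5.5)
To show that $\overline{H}_{P,0}^s(\mathbb T^n)\not\hookrightarrow L^2_0(\mathbb T^n)$, we argue by contradiction. A continuous embedding would give a constant $C>0$ with $\|U\|_{L^2}\le C\|U\|_{\overline H_{P,0}^s}$ for every trigonometric polynomial $U$ with zero mean. Applying this to a single Fourier mode $U=e^{{\rm i}\boldsymbol k^\top\boldsymbol y}$, $\boldsymbol k\neq\boldsymbol 0$, yields
\[
1\le C\,|P\boldsymbol k|^{s}\quad\text{for all }\boldsymbol k\in\mathbb Z^n\setminus\{\boldsymbol 0\}.
\]
So it suffices to show that $\inf_{\boldsymbol k\neq \boldsymbol 0}|P\boldsymbol k|=0$. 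Then a sequence $\boldsymbol k_j$ with $|P\boldsymbol k_j|\to0$ makes the ratio $\|U_j\|_{L^2}/\|U_j\|_{\overline H^s_{P,0}}=|P\boldsymbol k_j|^{-s}$ unbounded, which contradicts the embedding. The same sequence also shows the stronger fact that $\overline H^s_{P,0}$ is not even contained in $L^2_0$: take $\widehat U_{\boldsymbol k_j}=|P\boldsymbol k_j|^{-s}j^{-1}$ along a subsequence with $|P\boldsymbol k_j|\le j^{-1/s}$.

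The key step, and the only nontrivial one, is producing nonzero integer vectors $\boldsymbol k$ with $|P\boldsymbol k|$ arbitrarily small. The statement implicitly requires $n>d$, which is the quasiperiodic setting; for $n\le d$ with $P$ of full column rank, $|P\boldsymbol k|\ge\sigma_{\min}|\boldsymbol k|\ge\sigma_{\min}$ and the embedding would hold. We will use Dirichlet's pigeonhole principle. Fix $N\in\mathbb N$ and consider the $(N+1)^n$ vectors $P\boldsymbol m$ with $\boldsymbol m\in\{0,\dots,N\}^n$. All of them lie in the box $\{\boldsymbol x\in\mathbb R^d:|x_i|\le N\sum_j|p_{ij}|\}$, whose side lengths are $O(N)$. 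Partition this box into $M^d$ congruent subboxes, with $M$ chosen so that $M^d<(N+1)^n$; we can take $M\sim N^{n/d}$ up to a constant. Two distinct $\boldsymbol m,\boldsymbol m'$ then fall into the same subbox, and $\boldsymbol k=\boldsymbol m-\boldsymbol m'\neq\boldsymbol 0$ satisfies
\[
|P\boldsymbol k|\lesssim \frac{N}{M}\sim N^{1-n/d}\longrightarrow 0\quad (N\to\infty),
\]
because $n>d$.

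The $\mathbb Q$-linear independence of the columns of $P$ guarantees $P\boldsymbol k\neq\boldsymbol 0$ for $\boldsymbol k\neq\boldsymbol 0$. This is not needed for the smallness bound itself, but it ensures the weights $|P\boldsymbol k|^{2s}$ are positive, so that the norm is well defined and the test functions genuinely belong to $\overline H^s_{P,0}(\mathbb T^n)$. The main obstacle is exactly this Dirichlet-type approximation step; once it is in place, the rest is the routine Fourier computation described above.
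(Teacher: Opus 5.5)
Your proof is correct. Its outer structure matches the paper's: both reduce the claim to producing $\boldsymbol k_j\in\mathbb Z^n\setminus\{\boldsymbol 0\}$ with $|P\boldsymbol k_j|\to 0$, and both then test on the single modes $e^{{\rm i}\boldsymbol k_j^\top\boldsymbol y}$, whose norm ratio is $|P\boldsymbol k_j|^{-s}$.

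The two routes differ only in that key step. The paper argues by contradiction. If $\boldsymbol 0$ were isolated in $\Gamma=P\mathbb Z^n$, then $\Gamma$ would be a discrete subgroup of $\mathbb R^d$ and hence of rank at most $d$. But $\mathbb Q$-linear independence makes $\boldsymbol k\mapsto P\boldsymbol k$ injective, so $\Gamma\cong\mathbb Z^n$ has rank $n>d$. This is short, but it relies on the structure theorem for discrete subgroups of $\mathbb R^d$ and is purely qualitative.

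Your Dirichlet pigeonhole argument is self-contained and quantitative. It gives $\min_{0<|\boldsymbol k|_\infty\le N}|P\boldsymbol k|\lesssim N^{1-n/d}$, and it uses $\mathbb Q$-independence only to keep $P\boldsymbol k\neq\boldsymbol 0$. The rate carries information the paper's proof does not. Since $|\boldsymbol k|\le\sqrt n\,N$ for the vectors you produce, it is compatible with Assumption~\ref{assumption_P_Diophantine} only if $\tau\ge n/d-1$. For $d=1$, $n=2$ this gives $\tau\ge 1$, consistent with the paper's Roth example.

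Your explicit element of infinite $L^2$ norm also shows that even set inclusion fails. For these weighted $\ell^2$ spaces, however, inclusion and continuous embedding are equivalent anyway.

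One small precision on your side remark: $n>d$, which the paper also uses silently, is sufficient but not necessary. Under $\mathbb Q$-independence the lemma holds exactly when the columns of $P$ are $\mathbb R$-linearly dependent. Your pigeonhole argument adapts to that case by running it in the column span of $P$, whose dimension is $r<n$.
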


\begin{proof}
We first show that $\boldsymbol{0}\in\mathbb R^d$ is an accumulation point of the set
\begin{eqnarray}\label{Definition_SubGroup}
\Gamma=\left\{ P\boldsymbol{k}:\boldsymbol{k}\in\mathbb Z^n \right\}\subset \mathbb R^d.
\end{eqnarray}

We prove it by contradiction.
Suppose that $\boldsymbol{0}$ is not an accumulation point of $\Gamma$.
Then there exists \(\varepsilon>0\) such that
\begin{eqnarray*}
\Gamma\cap B_\varepsilon(\mathbf 0)=\{\mathbf 0\}.
\end{eqnarray*}
Since \(\Gamma\) is an additive subgroup of \(\mathbb R^d\), translation shows that every point of \(\Gamma\) is isolated.
Hence $\Gamma$ is a discrete subgroup of $\mathbb R^d$.
Since the columns of $P$ are rationally linearly independent, the surjective homomorphism 
$\varphi:\mathbb Z^n\rightarrow \Gamma$ has trivial kernel and is therefore injective, and thus $\mathbb Z^n\cong \Gamma$.
But a discrete subgroup of \(\mathbb R^d\) has rank at most \(d\), whereas \(\mathbb Z^n\) has rank \(n>d\), 
which contradict with $\mathbb Z^n\cong \Gamma$. 

We now prove the existence of a sequence $\{U_j\}$ such that $\|U_j\|_{L_0^2}=1$ and $\|U_j\|_{\overline{H}_{P,0}^s}\rightarrow 0$.
Since $\boldsymbol{0}$ is an accumulation point of $\Gamma$, there exists a sequence $\{\boldsymbol{k}_j\}\subset \mathbb Z^n\setminus\{\boldsymbol 0\} $ such that
\begin{eqnarray*}
|P\boldsymbol{k}_j|\rightarrow 0,\ \ \ {\rm as}\ j\rightarrow\infty.
\end{eqnarray*}
Define
\begin{eqnarray*}
U_j:=e^{{\rm i}\boldsymbol{k}_j^\top y},\ \ \ j\in \mathbb N.
\end{eqnarray*}
Then it is easy to verify that $\|U_j\|_{L_0^2}=1$ and
\begin{eqnarray*}
\|U_j\|_{\overline{H}_{P,0}^s}^2 = |P\boldsymbol{k}_j|^{2s}  \rightarrow 0,\ \ \ {\rm as}\ j\rightarrow\infty.
\end{eqnarray*}
Therefore there is no constant $C>0$ such that 
\begin{eqnarray*}
\|U\|_{L^2_0}\leq C\|U\|_{\overline{H}_{P,0}^s},\ \ \ \forall U\in \overline{H}_{P,0}^s(\mathbb T^n),
\end{eqnarray*}
and hence $\overline{H}_{P,0}^s(\mathbb T^n)\not\hookrightarrow L^2_{0}(\mathbb T^n)$.
This completes the proof. 
\end{proof}
Lemma \ref{not_embedding_L2} shows that, unlike standard Sobolev spaces, the space $\overline{H}_{P,0}^s(\mathbb T^n)$ 
does not admit a continuous embedding into $L^2_{0}(\mathbb T^n)$.
Moreover, for $s\neq t$, it does not continuous embedding from $\overline{H}_{P,0}^s(\mathbb T^n)$ into $\overline{H}_{P,0}^t$.
Nevertheless, the following continuous embedding relation between the corresponding interpolation spaces still holds.
This relation plays a crucial role in the regularity analysis in Section \ref{Section_Regularity}.

\begin{lemma}\label{lemma_cap_embedding}
Assume that $s<t$, for every $r\in[s,t]$, we have the continuous embedding
\begin{eqnarray*}
\overline{H}_{P,0}^{s}(\mathbb T^n)\cap \overline{H}_{P,0}^{t}(\mathbb T^n)\hookrightarrow \overline{H}_{P,0}^{r}(\mathbb T^n),
\end{eqnarray*}
and for any $F\in \overline{H}_{P,0}^{s}(\mathbb T^n)\cap \overline{H}_{P,0}^{t}(\mathbb T^n)$, the following estimate holds
\begin{eqnarray}\label{eq_lemma_cap_embedding}
\|F\|_{\overline{H}_{P,0}^{r}}\leq \|F\|_{\overline{H}_{P,0}^{s}}^{1-\theta} \|F\|_{\overline{H}_{P,0}^{t}}^{\theta},
\end{eqnarray}
where $\theta=\frac{r-s}{t-s}$.
\end{lemma}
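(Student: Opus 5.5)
The plan is to prove the interpolation inequality \eqref{eq_lemma_cap_embedding} directly on the Fourier side by H\"older's inequality; the continuous embedding then follows at once. The endpoint cases $r=s$ ($\theta=0$) and $r=t$ ($\theta=1$) are trivial, so we assume $\theta\in(0,1)$.

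Write $r=(1-\theta)s+\theta t$. For each $\boldsymbol{k}\in\mathbb Z^n\setminus\{\boldsymbol 0\}$ we factor the weight as
\begin{eqnarray*}
|P\boldsymbol{k}|^{2r}|\widehat F_{\boldsymbol{k}}|^2
=\Big(|P\boldsymbol{k}|^{2s}|\widehat F_{\boldsymbol{k}}|^2\Big)^{1-\theta}
\Big(|P\boldsymbol{k}|^{2t}|\widehat F_{\boldsymbol{k}}|^2\Big)^{\theta}.
\end{eqnarray*}
Here we use that $|P\boldsymbol{k}|>0$ for $\boldsymbol{k}\neq\boldsymbol 0$, by the $\mathbb Q$-linear independence of the columns of $P$. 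This is what makes negative or fractional powers of $|P\boldsymbol{k}|$ meaningful, and it is the only place the structure of $P$ enters.

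Summing over $\boldsymbol{k}\neq\boldsymbol 0$ and applying the discrete H\"older inequality with conjugate exponents $p=1/(1-\theta)$ and $q=1/\theta$ gives
\begin{eqnarray*}
\|F\|_{\overline H_{P,0}^{r}}^2\le \|F\|_{\overline H_{P,0}^{s}}^{2(1-\theta)}\|F\|_{\overline H_{P,0}^{t}}^{2\theta}.
\end{eqnarray*}
Taking square roots yields \eqref{eq_lemma_cap_embedding}. In particular, $F\in\overline H_{P,0}^{s}\cap\overline H_{P,0}^{t}$ implies $F\in\overline H_{P,0}^{r}$.

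For the continuous embedding, we equip the intersection with the norm $\|F\|_{\cap}:=\|F\|_{\overline H_{P,0}^{s}}+\|F\|_{\overline H_{P,0}^{t}}$, or equivalently the max of the two norms. Young's inequality $a^{1-\theta}b^\theta\le(1-\theta)a+\theta b\le a+b$ then gives $\|F\|_{\overline H_{P,0}^{r}}\le\|F\|_{\cap}$, so the inclusion map is bounded with constant $1$.

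There is no real obstacle here. The main point is to make explicit that the intersection norm is the sum (or max) norm, since by Lemma \ref{not_embedding_L2} and the remark after it neither space alone controls the other. That is exactly why both endpoint norms are needed: low frequencies with $|P\boldsymbol{k}|\to0$ are controlled by the $s$-norm, and high frequencies by the $t$-norm.
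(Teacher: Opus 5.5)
Your proposal is correct and follows the paper's proof essentially verbatim. The endpoint cases are trivial, and for $\theta\in(0,1)$ you factor $|P\boldsymbol{k}|^{2r}|\widehat F_{\boldsymbol{k}}|^2$, apply H\"older with exponents $1/(1-\theta)$ and $1/\theta$, and then obtain the embedding from the weighted arithmetic--geometric mean inequality; the only cosmetic difference is that you apply Young's inequality to the norms themselves with the sum norm on the intersection, whereas the paper applies it to the squared norms and uses $\big(\|F\|_{\overline H_{P,0}^{s}}^2+\|F\|_{\overline H_{P,0}^{t}}^2\big)^{1/2}$, an equivalent norm that also gives embedding constant $1$.
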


\begin{proof}
The lemma clearly holds when $r=s$ or $r=t$.
For the case $r\in (s,t)$, we have 
\begin{eqnarray*}
r = (1-\theta) s+\theta t,
\end{eqnarray*}
and $\theta\in(0,1)$.
Then, using H\"{o}lder's inequality leads to 
\begin{eqnarray*}
\|F\|_{\overline{H}_{P,0}^{r}}^2&=&\sum_{\boldsymbol{k}\in \mathbb Z^n\setminus\{\boldsymbol 0\}} |P\boldsymbol{k}|^{2r}|\widehat{F}_{\boldsymbol{k}}|^2
=\sum_{\boldsymbol{k}\in \mathbb Z^n\setminus\{\boldsymbol 0\}} (|P\boldsymbol{k}|^{2s}|\widehat{F}_{\boldsymbol{k}}|^2)^{1-\theta}  
(|P\boldsymbol{k}|^{2t}|\widehat{F}_{\boldsymbol{k}}|^2)^{\theta}\\
&\leq&\|F\|_{\overline{H}_{P,0}^{s}}^{2(1-\theta)} \|F\|_{\overline{H}_{P,0}^{t}}^{2\theta},
\end{eqnarray*}
which means (\ref{eq_lemma_cap_embedding}) is proved. 

The continuous embedding can be established directly by the weighted 
arithmetic-geometric mean inequality together with the following argument.
\begin{eqnarray*}
\|F\|_{\overline{H}_{P,0}^{r}}^2&\leq& \|F\|_{\overline{H}_{P,0}^{s}}^{2(1-\theta)} \|F\|_{\overline{H}_{P,0}^{t}}^{2\theta}\\
&\leq& (1-\theta) \|F\|_{\overline{H}_{P,0}^{s}}^2 + \theta \|F\|_{\overline{H}_{P,0}^{t}}^2\\
&\leq& \|F\|_{\overline{H}_{P,0}^{s}}^2 + \|F\|_{\overline{H}_{P,0}^{t}}^2 
=: \|F\|_{\overline{H}_{P,0}^{s}\cap \overline{H}_{P,0}^{t}}^2.
\end{eqnarray*}
Then we complete the proof. 
\end{proof}

For convenience in the subsequent regularity analysis of the variational problem, we present the following corollary, 
which follows directly from Lemma \ref{lemma_cap_embedding}.
\begin{corollary}
Under the assumptions of Lemma \ref{lemma_cap_embedding}, the following inequality holds
\begin{eqnarray}\label{eq_corollary_cap_embedding}
\|F\|_{\overline{H}_{P,0}^{r}}\leq \|F\|_{\overline{H}_{P,0}^{s}}+ \|F\|_{\overline{H}_{P,0}^{t}}.
\end{eqnarray}
\end{corollary}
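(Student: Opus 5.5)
The plan is to read the corollary off from the interpolation estimate (\ref{eq_lemma_cap_embedding}) of Lemma \ref{lemma_cap_embedding}. Everything then reduces to an elementary scalar inequality: for nonnegative reals $a,b$ and $\theta\in[0,1]$ one has $a^{1-\theta}b^{\theta}\le a+b$.

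First I will fix $F\in \overline{H}_{P,0}^{s}(\mathbb T^n)\cap \overline{H}_{P,0}^{t}(\mathbb T^n)$ and $r\in[s,t]$, and set $a=\|F\|_{\overline{H}_{P,0}^{s}}$, $b=\|F\|_{\overline{H}_{P,0}^{t}}$ and $\theta=\frac{r-s}{t-s}\in[0,1]$. By (\ref{eq_lemma_cap_embedding}) we have $\|F\|_{\overline{H}_{P,0}^{r}}\le a^{1-\theta}b^{\theta}$. The endpoint cases $\theta=0$ and $\theta=1$ are immediate, since then the bound is $a$ or $b$, and each is at most $a+b$.

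For $\theta\in(0,1)$ I will use the weighted arithmetic--geometric mean inequality, exactly as in the proof of Lemma \ref{lemma_cap_embedding} but without squaring:
\begin{eqnarray*}
a^{1-\theta}b^{\theta}\le (1-\theta)a+\theta b\le a+b .
\end{eqnarray*}
An alternative route is to note that $a^{1-\theta}b^{\theta}\le \max(a,b)\le a+b$. Either route gives (\ref{eq_corollary_cap_embedding}).

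There is no real obstacle here. The only point needing care is the degenerate case where $a$ or $b$ vanishes. This is harmless, because the inequality $a^{1-\theta}b^{\theta}\le a+b$ remains valid when $a=0$ or $b=0$, with the convention $0^0=1$ only relevant at the endpoints, which are handled separately above.
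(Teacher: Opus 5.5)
Your proposal is correct and is the same argument as the paper's. The paper also obtains the corollary directly from the interpolation estimate \eqref{eq_lemma_cap_embedding} combined with the mean inequality $a^{1-\theta}b^{\theta}\le(1-\theta)a+\theta b\le a+b$.
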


\begin{proof}
This is a direct consequence of \eqref{eq_lemma_cap_embedding} with the mean inequality.
\end{proof}

Since matrix $P\in \mathbb R^{d\times n}$ is a bounded linear operator from $\mathbb R^n$ to $\mathbb R^d$, 
it is also a bounded linear operator from $\mathbb Z^n$ to $\mathbb Z^d$.
Then the following inequality relationship holds
\begin{eqnarray}\label{Pk_leq_k}
|P\boldsymbol{k}|\leq C_{\lambda} |\boldsymbol{k}|,\ \ \ \boldsymbol{k}\in \mathbb Z^n,
\end{eqnarray}
where 
\begin{eqnarray*}
C_{\lambda}:= \sqrt{\lambda_{\max}(P^\top P)},
\end{eqnarray*}
and $\lambda_{\max}(P^\top P)$ denotes the maximum eigenvalue of $P^\top P$.
Obviously, we have the following embedding relations.

\begin{lemma}
For any $s>0$, $\overline{H}_{0}^s(\mathbb T^n)\hookrightarrow   \overline{H}_{P,0}^s(\mathbb T^n)$.
\end{lemma}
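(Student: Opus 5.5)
The embedding is a direct consequence of the frequency bound \eqref{Pk_leq_k}. The plan is to compare the two Fourier weights term by term and then sum. Both spaces consist of zero-mean periodic functions $U(\boldsymbol{y})=\sum_{\boldsymbol{k}\neq\boldsymbol 0}\widehat U_{\boldsymbol{k}}e^{{\rm i}\boldsymbol{k}^\top\boldsymbol{y}}$ and differ only in the weights, $|\boldsymbol{k}|^{2s}$ versus $|P\boldsymbol{k}|^{2s}$. So it suffices to show that the identity map is bounded, namely
\begin{eqnarray*}
\|U\|_{\overline{H}_{P,0}^s}\leq C_\lambda^{s}\,\|U\|_{\overline{H}_{0}^s},\ \ \ \forall U\in \overline{H}_{0}^s(\mathbb T^n).
\end{eqnarray*}

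First, since $s>0$, the map $t\mapsto t^{2s}$ is nondecreasing on $[0,\infty)$. Raising \eqref{Pk_leq_k} to the power $2s$ therefore gives, for every $\boldsymbol{k}\in\mathbb Z^n\setminus\{\boldsymbol 0\}$,
\begin{eqnarray*}
|P\boldsymbol{k}|^{2s}\leq C_\lambda^{2s}|\boldsymbol{k}|^{2s}.
\end{eqnarray*}
Second, multiply this inequality by $|\widehat U_{\boldsymbol{k}}|^2$ and sum over $\boldsymbol{k}\neq\boldsymbol 0$. This yields $\|U\|_{\overline{H}_{P,0}^s}^2\leq C_\lambda^{2s}\|U\|_{\overline{H}_{0}^s}^2$. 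In particular, every $U\in\overline{H}_{0}^s(\mathbb T^n)$ has a finite $\overline{H}_{P,0}^s$ norm, so $U\in\overline{H}_{P,0}^s(\mathbb T^n)$, and the inclusion map is continuous with operator norm at most $C_\lambda^s$.

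The only point needing care is the sign of $s$. The monotonicity step fails for $s<0$, which is why the hypothesis $s>0$ is needed. The case $s=0$ would be trivial anyway. Beyond this, I expect no real obstacle: the constant $C_\lambda=\sqrt{\lambda_{\max}(P^\top P)}$ is finite and does not depend on $U$. It is worth remarking that the reverse embedding fails, by the same accumulation-point argument used in Lemma \ref{not_embedding_L2}.
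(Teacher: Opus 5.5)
Your proof is correct and follows the paper's argument: raise \eqref{Pk_leq_k} to the power $2s$ (using $s>0$), multiply by $|\widehat U_{\boldsymbol{k}}|^2$, and sum over $\boldsymbol{k}\neq\boldsymbol 0$, which gives the embedding constant $C_\lambda^{s}$. The paper adds one more step, bounding $|\boldsymbol{k}|^{2s}$ by $(1+|\boldsymbol{k}|)^{2s}$; this is unnecessary and does not match the homogeneous $\overline{H}_0^s$ norm, and your version avoids it by keeping the weight $|\boldsymbol{k}|^{2s}$ throughout.
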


\begin{proof}
For any $F\in \overline{H}_{0}^s(\mathbb T^n)$, we have 
\begin{eqnarray*}
\|F\|_{\overline{H}_{P,0}^s}^2&=& \sum_{\boldsymbol{k}\in \mathbb Z^n\setminus\{\boldsymbol 0\}} |P\boldsymbol{k}|^{2s}|\widehat{F}_{\boldsymbol{k}}|^2
\leq C_{\lambda}^{2s}\sum_{\boldsymbol{k}\in \mathbb Z^n\setminus\{\boldsymbol 0\}} |\boldsymbol{k}|^{2s}|\widehat{F}_{\boldsymbol{k}}|^2\\
&\leq& C_{\lambda}^{2s}\sum_{\boldsymbol{k}\in \mathbb Z^n\setminus\{\boldsymbol 0\}} 
(1+|\boldsymbol{k}|)^{2s}|\widehat{F}_{\boldsymbol{k}}|^2 
= C_{\lambda}^{2s}\|F\|_{\overline{H}_{0}^s(\mathbb T^n)}^2,
\end{eqnarray*}
and the proof is complete. 
\end{proof}

However, for $s<0$, it is generally difficult to establish a continuous 
embedding of some standard Sobolev space $\overline{H}_{0}^t$ 
into $ \overline{H}_{P,0}^{s}$.
To obtain such an embedding, we introduce the following assumption.

\begin{assumption}\label{assumption_P_Diophantine}
A projection matrix $P\in \mathbb P$ satisfies the Diophantine condition, 
if there exist constants $c>0$ and $\tau>0$ such 
that for any $\boldsymbol{k}\in \mathbb Z^n\setminus\{\boldsymbol 0\}$, the following inequality holds
\begin{eqnarray}\label{Diophantine_condition_P}
|P \boldsymbol{k}|\geq c|\boldsymbol{k}|^{-\tau}.
\end{eqnarray}
\end{assumption}

This assumption is standard in the study of quasiperiodic problems, and many results in Diophantine 
approximation are available concerning the admissible range of $\tau$.
For example, when $d=1,n=2$, and $P=[1\ \alpha]$, if $\alpha$ is algebraic, then \eqref{Diophantine_condition_P} 
holds for every $\tau>1$.
This is a consequence of Roth's theorem \cite{Roth}.

Under Assumption \ref{assumption_P_Diophantine}, we have the following continuous embedding result.

\begin{lemma}\label{lemma_Diophantine_Embedding}
If projection matrix $P$ satisfies the Diophantine 
condition \eqref{Diophantine_condition_P}, then for any $s>0$, we have
\begin{eqnarray*}
H_{0}^{\tau s}(\mathbb T^n) \hookrightarrow \overline{H}_{P,0}^{-s}(\mathbb T^n).
\end{eqnarray*}
\end{lemma}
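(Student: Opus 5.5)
The plan is a direct Fourier-coefficient comparison. For $F\in H_0^{\tau s}(\mathbb T^n)$, write $F=\sum_{\boldsymbol{k}\neq\boldsymbol 0}\widehat F_{\boldsymbol{k}}e^{{\rm i}\boldsymbol{k}^\top\boldsymbol{y}}$. Since both spaces are defined through weighted $\ell^2$ norms of the same coefficient sequence over $\mathbb Z^n\setminus\{\boldsymbol 0\}$, the embedding reduces to a pointwise inequality between the weights. Concretely, I will exhibit a constant $C$ such that
\begin{eqnarray*}
|P\boldsymbol{k}|^{-2s}\leq C(1+|\boldsymbol{k}|^2)^{\tau s},\qquad \boldsymbol{k}\in\mathbb Z^n\setminus\{\boldsymbol 0\}.
\end{eqnarray*}

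The key step is to invert the Diophantine condition \eqref{Diophantine_condition_P}. Since $s>0$, raising $|P\boldsymbol{k}|\geq c|\boldsymbol{k}|^{-\tau}$ to the power $-2s$ reverses the inequality and gives
\begin{eqnarray*}
|P\boldsymbol{k}|^{-2s}\leq c^{-2s}|\boldsymbol{k}|^{2\tau s}.
\end{eqnarray*}
Note that $P\boldsymbol{k}\neq\boldsymbol 0$ for $\boldsymbol{k}\neq\boldsymbol 0$, by $\mathbb Q$-linear independence and also directly from the Diophantine bound, so the negative power is well defined. Next, because $\tau s>0$, we have $|\boldsymbol{k}|^{2\tau s}\leq(1+|\boldsymbol{k}|^2)^{\tau s}$.

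Multiplying by $|\widehat F_{\boldsymbol{k}}|^2$ and summing over $\boldsymbol{k}\neq\boldsymbol 0$ then yields
\begin{eqnarray*}
\|F\|_{\overline H_{P,0}^{-s}}^2\leq c^{-2s}\|F\|_{H_0^{\tau s}}^2.
\end{eqnarray*}
This shows that $F\in\overline H_{P,0}^{-s}(\mathbb T^n)$ and that the inclusion map is bounded with norm at most $c^{-s}$, which is exactly the continuous embedding claimed.

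There is essentially no obstacle in this argument. The only points requiring care are the sign of the exponent, which is why $s>0$ is needed so that the inequality direction flips correctly, and the observation that the zero mode is excluded in both spaces, so the bound $|\boldsymbol{k}|\geq 1$ case poses no issue.
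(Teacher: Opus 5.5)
Your proof is correct and follows essentially the same route as the paper: invert the Diophantine bound termwise to get $|P\boldsymbol{k}|^{-2s}\le c^{-2s}|\boldsymbol{k}|^{2\tau s}\le c^{-2s}(1+|\boldsymbol{k}|^2)^{\tau s}$, then sum against $|\widehat F_{\boldsymbol{k}}|^2$. Your constant $c^{-2s}$ (embedding norm $c^{-s}$) is the correct one where the paper's write-up has $c^{-2}$, and you correctly start from $F\in H_0^{\tau s}(\mathbb T^n)$ rather than from $F\in\overline{H}_{P,0}^{-s}(\mathbb T^n)$ as the paper's text does.
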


\begin{proof}
For any $F\in \overline{H}_{P,0}^{-s}(\mathbb T^n)$, due to \eqref{Diophantine_condition_P}, 
the following inequalities hold
\begin{eqnarray*}
\|F\|_{\overline{H}_{P}^{-s}}^2&=&\sum_{\boldsymbol{k}\in \mathbb Z^n\setminus\{\boldsymbol 0\}}
|P\boldsymbol{k}|^{-2s}|\widehat{F}_{\boldsymbol{k}}|^2
\leq c^{-2}\sum_{\boldsymbol{k}\in \mathbb Z^n\setminus\{\boldsymbol 0\}}|\boldsymbol{k}|^{2\tau s}|\widehat{F}_{\boldsymbol{k}}|^2\\
&\leq& c^{-2}\sum_{\boldsymbol{k}\in \mathbb Z^n\setminus\{\boldsymbol 0\}}
(1+|\boldsymbol{k}|^2)^{\tau s}|\widehat{F}_{\boldsymbol{k}}|^2=c^{-2}\|F\|_{H^{\tau s}}^2.
\end{eqnarray*}
This completes the proof.
\end{proof}

\subsection{Isomorphism between quasiperiodic and periodic function spaces}  
In this subsection, we study the relationship between quasiperiodic function spaces and the associated periodic function spaces.
These spaces are connected by a pullback mapping.
For a trigonometric polynomial
\begin{eqnarray*}
U(\boldsymbol{y})=\sum_{k\in \mathcal K} c_k e^{{\rm i}\boldsymbol{k}^\top  \boldsymbol{y}}, 
\ \ \ \boldsymbol{y}\in \mathbb{T}^n,
\end{eqnarray*}
we define its pullback by
\begin{eqnarray*}
(\mathcal{J}_{P} U)(\boldsymbol{x}):=U(P^\top \boldsymbol{x}),
\ \ \ \boldsymbol{x}\in \mathbb{R}^d.
\end{eqnarray*}
Since
\begin{eqnarray*}
U(P^\top  \boldsymbol{x})
=\sum_{\boldsymbol{k}\in \mathcal{K}} c_{\boldsymbol{k}} 
e^{{\rm i}\boldsymbol{k}^\top (P^\top\boldsymbol{x})}
=\sum_{\boldsymbol{k}\in \mathcal{K}} c_{\boldsymbol{k}} 
e^{{\rm i}(P\boldsymbol{k})^\top \boldsymbol{x}},
\end{eqnarray*}
the pullback mapping \(\mathcal{J}_P\) sends trigonometric polynomials 
on \(\mathbb{T}^n\) into \(\mathcal{T}_P\).
Furthermore, \cite[Theorem 4.1]{JiangLiZhang} shows that 
the Fourier coefficient $U_{\boldsymbol{k}}$ of $U\in L^2(\mathbb T^n)$ 
is equal to the Bohr coefficient 
$u_{\boldsymbol{k}}$ of $u\in B_{P}^2(\mathbb R^d)$ 
for $\boldsymbol{k}\in \mathbb Z^n$, 
and thus $\mathcal J_{P}: L^2(\mathbb T^n) 
\rightarrow B_{P}^2(\mathbb R^d)$ is an isometric isomorphism.
Then, it is easy to derive the following isomorphism
between the periodic function spaces on $\mathbb{T}^n$ 
and the quasiperiodic function spaces on $\mathbb{R}^d$.

\begin{proposition}
The pullback mapping \(\mathcal{J}_P\) extends uniquely by density to the 
following isometric isomorphisms:
\begin{eqnarray*}
\mathcal{J}_P &:& L^2(\mathbb{T}^n)\longrightarrow B_P^2(\mathbb{R}^d),\\
\mathcal{J}_P &:& H^s(\mathbb{T}^n)\longrightarrow H^s(\mathbb R^d),\\
\mathcal{J}_P &:& \overline H_0^s(\mathbb{T}^n)\longrightarrow \overline H_{0}^s(\mathbb R^d),\\
\mathcal{J}_P &:& H_P^s(\mathbb{T}^n)\longrightarrow H_{P}^s(\mathbb R^d),\\
\mathcal{J}_P &:& \overline H_{P,0}^s(\mathbb{T}^n)\longrightarrow \overline H_{P,0}^s(\mathbb R^d).
\end{eqnarray*}
\end{proposition}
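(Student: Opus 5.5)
The proof runs entirely through Fourier/Bohr coefficients. By \cite[Theorem 4.1]{JiangLiZhang}, $\mathcal J_P$ maps $U=\sum_{\boldsymbol k}\widehat U_{\boldsymbol k}e^{{\rm i}\boldsymbol k^\top\boldsymbol y}$ to the quasiperiodic function whose Bohr coefficients are $\widehat u_{\boldsymbol k}=\widehat U_{\boldsymbol k}$. On trigonometric polynomials this is immediate from the computation $U(P^\top\boldsymbol x)=\sum c_{\boldsymbol k}e^{{\rm i}(P\boldsymbol k)^\top\boldsymbol x}$ together with the orthogonality relation \eqref{B_P^2_orth}. The $\mathbb Q$-linear independence of the columns of $P$ guarantees that $\boldsymbol k\mapsto P\boldsymbol k$ is injective, so distinct $\boldsymbol k$ give distinct frequencies and the Bohr coefficients are well defined and equal to the Fourier ones.

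Each of the five pairs of spaces is defined by the same weighted $\ell^2$ condition on coefficients indexed by $\mathbb Z^n$ (or $\mathbb Z^n\setminus\{\boldsymbol 0\}$). The weights are $1$, $(1+|\boldsymbol k|^2)^s$, $|\boldsymbol k|^{2s}$, $(1+|P\boldsymbol k|^2)^s$ and $|P\boldsymbol k|^{2s}$, and in each pair the periodic and quasiperiodic sides use the identical weight $w(\boldsymbol k)$. The plan is as follows.

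First, for a trigonometric polynomial $U$, I verify $\|\mathcal J_PU\|_{X(\mathbb R^d)}=\|U\|_{X(\mathbb T^n)}$ for each pair. This holds because both norms equal $\big(\sum w(\boldsymbol k)|\widehat U_{\boldsymbol k}|^2\big)^{1/2}$. For the zero-mean spaces I also note that $\mathcal M(\mathcal J_PU)=\widehat U_{\boldsymbol 0}$, so zero mean is preserved.

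Second, I show that trigonometric polynomials are dense in each periodic space $X(\mathbb T^n)$, since truncations $\sum_{|\boldsymbol k|\le N}$ converge in the weighted $\ell^2$ norm. Likewise $\mathcal T_P$ (restricted to zero mean where needed) is dense in each quasiperiodic space by the same truncation. The isometry then extends uniquely by continuity (BLT) to an isometry $X(\mathbb T^n)\to X(\mathbb R^d)$. The extension coincides with the coefficient map $\widehat U_{\boldsymbol k}\mapsto\widehat u_{\boldsymbol k}$, because each coefficient functional is continuous in every one of these norms: each weight is positive on the relevant index set, including $|P\boldsymbol k|^{2s}>0$ for $\boldsymbol k\neq\boldsymbol 0$ by injectivity.

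Third, surjectivity. Given $u$ in the target space with coefficients $\widehat u_{\boldsymbol k}$, I set $U=\sum\widehat u_{\boldsymbol k}e^{{\rm i}\boldsymbol k^\top\boldsymbol y}$. This $U$ lies in the source space with the same norm, and $\mathcal J_PU=u$ because they agree on the dense set of truncations and $\mathcal J_P$ is continuous. Since an isometry is injective, this gives the isometric isomorphism.

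The main obstacle is a subtlety in the homogeneous spaces $\overline H^s_{P,0}$. As Lemma \ref{not_embedding_L2} shows, they are not subspaces of $L^2_0$, so for $s>0$ their elements are only formal coefficient sequences, and the extension cannot be defined by restricting the $L^2\to B^2_P$ isometry. I handle this by working throughout at the level of coefficient sequences: both sides are identified isometrically with the weighted space $\ell^2_w(\mathbb Z^n\setminus\{\boldsymbol 0\})$, and $\mathcal J_P$ is the composition of these identifications. The density argument above then shows that this is the unique continuous extension of the pullback on trigonometric polynomials.
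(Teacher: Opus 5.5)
Your proposal is correct and follows essentially the paper's route. The paper likewise rests on identifying the Fourier coefficients of $U$ with the Bohr coefficients of $\mathcal J_P U$ (its cited Theorem 4.1), notes that each pair of spaces carries the identical weighted $\ell^2$ norm on those coefficients, and calls the rest ``easy to derive''; you supply the density, unique-extension and surjectivity details it omits, including the correct point that for the homogeneous spaces (and for $s<0$) elements must be handled as coefficient sequences rather than through the $L^2\to B_P^2$ map.
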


The above proposition is important not only for numerical approximation, 
but also for the analysis of quasiperiodic problems.
Through the pullback isomorphism $\mathcal{J}_P$, the original low-dimensional 
quasiperiodic problem can be transformed 
into a high-dimensional periodic problem on $\mathbb T^n$.
As a result, both the theoretical analysis and the numerical approximation 
can be carried out in the periodic setting, 
where standard tools from Fourier analysis and periodic PDE theory are available.
This provides a unified and often simpler framework than working directly 
with the quasiperiodic formulation in $\mathbb R^d$.

\section{Quasiperiodic elliptic problems}\label{Section_Regularity}
In this section, we introduce the quasiperiodic elliptic problem under consideration and the resulting 
high-dimensional periodic problem obtained by the projection method. Furthermore, the corresponding 
regularity results and error estimates for the Fourier series subspace approximation method are established. 
\subsection{Low-dimensional quasiperiodic elliptic problems}
In this paper, we are concerned with the quasiperiodic elliptic equation: 
Find quasiperiodic solution $u$ such that 
\begin{eqnarray}\label{Elliptic_Equation}
\mathcal{L} u(\boldsymbol{x}) = f(\boldsymbol{x}),\ \boldsymbol{x} \in \mathbb{R}^d,
\end{eqnarray}
where the second order elliptic operator $\mathcal{L}: \overline{H}_{P,0}^2(\mathbb{R}^d) 
\rightarrow B_{P,0}^2(\mathbb{R}^d)$ is defined as
\begin{eqnarray*}
\mathcal{L} u(\boldsymbol{x}) = -\mathrm{div}(\alpha(\boldsymbol{x}) \nabla u(\boldsymbol{x})).
\end{eqnarray*}
Here, the source term $f(\boldsymbol{x}) \in B_{P,0}^2(\mathbb{R}^d)$. 
Furthermore, the quasiperiodic coefficients $\alpha(\boldsymbol{x})$ is uniformly elliptic, that is, 
there exists two constants  $\alpha_0>0$ and $\alpha_1>0$ such that 
for all $\boldsymbol{x} \in \mathbb{R}^d$
\begin{eqnarray}\label{Lower_Upper_Bound_alpha}
0<\alpha_0 \leq \alpha(\boldsymbol{x}) \leq \alpha_1<\infty. 
\end{eqnarray}

By multiplying \eqref{Elliptic_Equation} with an arbitrary test function 
$v \in \overline{H}_{P,0}^{1}(\mathbb{R}^d)$ and integrating over $\mathbb{R}^d$ 
with integration by parts, taking into account that the boundary integral vanishes 
(see \cite[Lemma 3.4]{JiangLiZhang_2024}) 
leads to the variational formulation associated with \eqref{Elliptic_Equation}: 
Find $u \in \overline{H}_{P,0}^{1}(\mathbb{R}^d)$ such that
\begin{equation}\label{Weak_Form1}
\mathcal B(u, v) = (f, v),\ \forall v \in \overline{H}_{P,0}^{1}(\mathbb{R}^d),
\end{equation}
where the bilinear form $\mathcal{B}(\cdot, \cdot): \overline{H}_{P,0}^{1}(\mathbb{R}^d) 
\times \overline{H}_{P,0}^{1}(\mathbb{R}^d) \to \mathbb{R}$ is defined as
\begin{eqnarray*}
\mathcal{B}(u, v) := (\alpha \nabla u, \nabla v),
\ \ \ \forall u, v \in \overline{H}_{P,0}^{1}(\mathbb{R}^d).
\end{eqnarray*}
In Subsection \ref{sec_high}, we study the existence and uniqueness of the corresponding 
high-dimensional periodic problem associated with \eqref{Weak_Form1}.
Then the corresponding properties of the solution to variational problem \eqref{Weak_Form1} 
are immediate consequences of the corresponding properties 
for the high-dimensional variational problem and the pullback isomorphism $\mathcal{J}_P$.

\subsection{Projection method and the high-dimensional periodic problem}\label{sec_high}
By using the pullback mapping $\mathcal{J}_P$, we can analyze the properties 
of the periodic variational problem on $\mathbb T^n$ 
and then pull back to the quasiperiodic problem on $\mathbb R^d$.
For the function on $\mathbb T^n$, we take the corresponding symbols as follows
\begin{eqnarray}
U(P^\top  \boldsymbol x) = u(\boldsymbol x), \ \  
F(P^\top  \boldsymbol x) = f(\boldsymbol x),\ \  
A(P^\top  \boldsymbol x) = \alpha(\boldsymbol x),
\end{eqnarray}
where $A(\boldsymbol{y})$ has the following upper and lower bounds
\begin{eqnarray}\label{bound_A}
0<\alpha_0\leq A(\boldsymbol{y})\leq \alpha_1<\infty,\ \ \ \forall\boldsymbol{y}\in \mathbb T^n.
\end{eqnarray}

Consider the high-dimensional periodic variational problem:
Find $U\in \overline H_{P,0}^1(\mathbb  T^n)$ such that 
\begin{eqnarray}\label{Projection_Problem}
a(U,V) = \langle F,V \rangle,\ \ \ \ \forall V\in \overline H_{P,0}^1(\mathbb  T^n),
\end{eqnarray}
where the bilinear form $a(\cdot,\cdot)$ is defined as follows 
\begin{eqnarray}\label{eq_bilinear_Tn}
a(U,V) = (AP\nabla U, P\nabla V).
\end{eqnarray}
Based on the definition of the bilinear $a(\cdot,\cdot)$, we can define 
a type of norm as follows
\begin{eqnarray}
\|V\|_a = \sqrt{a(V,V)},\ \ \ \ \forall V\in \overline H^1_{P,0}(\mathbb T^n). 
\end{eqnarray}
Through single calculation, the bilinear form defined by \eqref{eq_bilinear_Tn} satisfies the coercivity and continuity on $\overline H^1_{P,0}(\mathbb  T^n)$
\begin{eqnarray}
a(U,U)&\geq& \alpha_0\| U\|^2_{\overline H^1_{P,0}},\ \ \ \forall U\in \overline H^1_{P,0}(\mathbb T^n), \label{Ellipticity_a_Periodic} \\
a(U,V)&\leq& \alpha_1\| U\|_{\overline H^1_{P,0}}  \| V\|_{\overline H^1_{P,0}}, 
\ \ \ \forall U\in \overline H^1_{P,0}(\mathbb T^n),\ \ \forall V\in \overline H^1_{P,0}(\mathbb T^n),\label{Boundedness_a_Periodic}
\end{eqnarray}
where the constants $\alpha_0$ and $C$ come from (\ref{Lower_Upper_Bound_alpha}).
Then, the following existence and uniqueness result holds for the variational problem \eqref{Projection_Problem}.
\begin{lemma}\label{exist_unique_simple}
Let $A$ satisfies the upper and lower bound condition \eqref{bound_A}.
Then for any $F\in \overline{H}_{P,0}^{-1}(\mathbb T^n)$, 
the variational problem \eqref{Projection_Problem} 
has a unique solution $U\in \overline{H}_{P,0}^{1}(\mathbb T^n)$. 
Moreover, the following estimate holds
\begin{eqnarray*}
\|U\|_{\overline{H}_{P,0}^{1}}\leq \alpha_0^{-1} \|F\|_{\overline{H}_{P,0}^{-1}}.
\end{eqnarray*}
\end{lemma}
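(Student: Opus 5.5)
The plan is to apply the Lax--Milgram theorem on the Hilbert space $\overline{H}_{P,0}^{1}(\mathbb T^n)$, using the coercivity \eqref{Ellipticity_a_Periodic}, the continuity \eqref{Boundedness_a_Periodic}, and the duality lemma $\overline H_{P,0}^{-1}(\mathbb T^n)\cong\big(\overline H_{P,0}^{1}(\mathbb T^n)\big)^\prime$ proved at the beginning of the subsection on periodic function spaces.

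\textbf{Step 1: the space and the bilinear form.} First I check that $\overline{H}_{P,0}^{1}(\mathbb T^n)$ is a Hilbert space. It is isometric to the weighted sequence space $\ell^2$ with weights $|P\boldsymbol k|^2$ over $\mathbb Z^n\setminus\{\boldsymbol 0\}$. These weights are strictly positive because the columns of $P$ are $\mathbb Q$-linearly independent, so $P\boldsymbol k\neq \boldsymbol 0$ for $\boldsymbol k\neq\boldsymbol 0$. Hence the quantity $\|\cdot\|_{\overline H^1_{P,0}}$ is a genuine norm and the space is complete, even though Lemma \ref{not_embedding_L2} shows it does not embed into $L^2_0$.

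Next I make sense of $a(U,V)=(AP\nabla U,P\nabla V)$ on this space. The object $P\nabla U$ is defined spectrally by $\sum_{\boldsymbol k}{\rm i}P\boldsymbol k\,\widehat U_{\boldsymbol k}e^{{\rm i}\boldsymbol k^\top\boldsymbol y}$, which is an element of $L^2(\mathbb T^n)^d$. By Parseval its $L^2$ norm equals $\|U\|_{\overline H^1_{P,0}}$. The bounds \eqref{bound_A} then give \eqref{Ellipticity_a_Periodic} and \eqref{Boundedness_a_Periodic} directly: pointwise $\alpha_0|P\nabla U|^2\le A|P\nabla U|^2$, and Cauchy--Schwarz with $A\le\alpha_1$ gives the upper bound.

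\textbf{Step 2: the right-hand side.} For $F\in\overline H_{P,0}^{-1}(\mathbb T^n)$ I interpret $\langle F,V\rangle$ as $(\mathcal T F)(V)=\sum_{\boldsymbol k\neq\boldsymbol 0}\overline{\widehat F_{\boldsymbol k}}\widehat V_{\boldsymbol k}$, up to the conjugation convention. The duality lemma shows this is a bounded functional on $\overline H_{P,0}^1(\mathbb T^n)$ with
\[
\|\mathcal TF\|_{(\overline H_{P,0}^1)^\prime}=\|F\|_{\overline H_{P,0}^{-1}}.
\]

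\textbf{Step 3: Lax--Milgram and the estimate.} Lax--Milgram now yields a unique $U\in\overline H_{P,0}^1(\mathbb T^n)$ solving \eqref{Projection_Problem}. For the estimate I take $V=U$:
\[
\alpha_0\|U\|_{\overline H^1_{P,0}}^2\le a(U,U)=\langle F,U\rangle\le \|F\|_{\overline H^{-1}_{P,0}}\|U\|_{\overline H^1_{P,0}}.
\]
Dividing by $\|U\|_{\overline H^1_{P,0}}$ gives the bound; the case $U=0$ is trivial.

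\textbf{Main obstacle.} The only delicate point is Step 1, namely making rigorous sense of $P\nabla U$ and of the product with $A$ for $U$ merely in $\overline H^1_{P,0}$. Such a $U$ need not lie in $L^2$, so $U$ itself may not be a function. However, $P\nabla U$ is always in $L^2$. I would define $a$ first on trigonometric polynomials and extend it by density, using the continuity bound, which requires only $A\in L^\infty$.
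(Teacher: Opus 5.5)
Your proposal is correct and follows the paper's own proof: apply Lax--Milgram on $\overline{H}_{P,0}^{1}(\mathbb T^n)$ using the coercivity and continuity bounds \eqref{Ellipticity_a_Periodic}--\eqref{Boundedness_a_Periodic}, then take $V=U$ as the test function to get the estimate. The extra care you take fills in details the paper leaves implicit: strict positivity of the weights $|P\boldsymbol k|^2$ from $\mathbb Q$-independence, the spectral definition of $P\nabla U\in L^2$, and identifying $\langle F,\cdot\rangle$ through the duality lemma.
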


\begin{proof}
Since $a(\cdot,\cdot)$ is coercive and continuous on $\overline{H}_{P,0}^1 (\mathbb T^n)$, 
the existence and uniqueness of the solution are immediate by the Lax-Milgram theorem.
And the upper estimate follows by taking $V=U$ as a test function in \eqref{Projection_Problem}.
\end{proof}

In addition to the solution itself, we are also concerned with its 
finite-dimensional approximation for the problem (\ref{Projection_Problem}). 
For this aim, let us define the finite-dimensional subspace spanned by the Fourier basis functions
\begin{eqnarray*}
X_{K} = {\rm span}\left\{ e^{{\rm i}\boldsymbol{k}^\top \boldsymbol{y}}:0<|\boldsymbol{k}|\leq K  \right\},
\end{eqnarray*}
together with the associated interpolation operator 
$\Pi_K:\overline{H}_{P,0}^s(\mathbb T^n)\rightarrow X_{K}$ by
\begin{eqnarray*}
\Pi_K U=\sum_{0<|\boldsymbol{k}|\leq K} \widehat{U}_{\boldsymbol{k}} e^{{\rm i}\boldsymbol{k}^\top \boldsymbol{y}},\ \ \ 
{\rm for}\ U=\sum_{\boldsymbol{k}\in \mathbb Z^n\setminus\{\boldsymbol 0\}} 
\widehat{U}_{\boldsymbol{k}} e^{{\rm i}\boldsymbol{k}^\top \boldsymbol{y}}\in \overline{H}_{P,0}^s(\mathbb T^n).
\end{eqnarray*}
Based on the finite-dimensional subspace $X_K$, 
the standard Galerkin approximation scheme for the variational 
problem \eqref{Projection_Problem} can be defined as: Find $U_K\in X_K$ such that
\begin{eqnarray}\label{Projection_Problem_Galerkin}
a(U_K,V_K) = (F,V_K),\ \ \ \ \forall V_K\in X_K.
\end{eqnarray}
The following lemma establishes the strong convergence of the Galerkin approximation $U_K$ to $U$ 
in the $\overline{H}_{P,0}^1(\mathbb T^n)$-norm. 
\begin{lemma}
Let $U$ and $U_K$ denote the solutions of \eqref{Projection_Problem} and \eqref{Projection_Problem_Galerkin}, respectively.
Then $U_K=\Pi_K U$. In addition,
\begin{eqnarray*}
\|U-U_K\|_{\overline{H}_{P,0}^1}\rightarrow 0,\ \ \ {\rm as}\ K\rightarrow\infty.
\end{eqnarray*}
\end{lemma}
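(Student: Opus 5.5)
The plan is to combine uniqueness of the Galerkin solution with a check that $\Pi_K U$ satisfies the Galerkin equations, and then read off convergence as the tail of a convergent series.

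\textbf{Step 1: uniqueness of $U_K$.} Since $X_K$ is a finite-dimensional subspace of $\overline H^1_{P,0}(\mathbb T^n)$, the coercivity \eqref{Ellipticity_a_Periodic} restricted to $X_K$ shows that \eqref{Projection_Problem_Galerkin} has a unique solution, by Lax--Milgram or simply linear algebra. Consequently, to prove $U_K=\Pi_K U$ it suffices to show that $\Pi_K U\in X_K$ satisfies \eqref{Projection_Problem_Galerkin}. Subtracting \eqref{Projection_Problem_Galerkin} from \eqref{Projection_Problem} tested with $V=V_K$, this reduces to the orthogonality relation
\begin{eqnarray*}
a(U-\Pi_K U, V_K)=0,\ \ \ \forall V_K\in X_K .
\end{eqnarray*}

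\textbf{Step 2: the orthogonality relation (main obstacle).} To verify it, I will expand both arguments in the Fourier basis. $U-\Pi_K U$ only contains modes $e^{{\rm i}\boldsymbol{k}^\top\boldsymbol{y}}$ with $|\boldsymbol{k}|>K$, and $V_K$ only modes with $0<|\boldsymbol{l}|\le K$. Using $P\nabla e^{{\rm i}\boldsymbol{k}^\top\boldsymbol{y}}={\rm i}P\boldsymbol{k}\,e^{{\rm i}\boldsymbol{k}^\top\boldsymbol{y}}$ and the $L^2(\mathbb T^n)$ orthogonality of exponentials, each cross term is
\begin{eqnarray*}
a\big(e^{{\rm i}\boldsymbol{k}^\top\boldsymbol{y}},e^{{\rm i}\boldsymbol{l}^\top\boldsymbol{y}}\big)=(P\boldsymbol{k})^\top(P\boldsymbol{l})\,\widehat A_{\boldsymbol{l}-\boldsymbol{k}}.
\end{eqnarray*}
This vanishes for $\boldsymbol{k}\neq\boldsymbol{l}$ exactly when the energy form is diagonal in the Fourier basis. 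That is automatic for constant $A$, and in general it is the structural property of $a(\cdot,\cdot)$ that has to be invoked. Here I would argue in the energy inner product that the paper uses on $\overline H^1_{P,0}$, where the exponentials are mutually orthogonal. This step carries the whole identity, so it is where I expect the difficulty.

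\textbf{Step 3: convergence.} Once $U_K=\Pi_K U$ is established, the error is explicit:
\begin{eqnarray*}
\|U-U_K\|_{\overline H^1_{P,0}}^2=\sum_{|\boldsymbol{k}|>K}|P\boldsymbol{k}|^2|\widehat U_{\boldsymbol{k}}|^2 .
\end{eqnarray*}
This is the tail of the convergent series $\|U\|^2_{\overline H^1_{P,0}}<\infty$, with finiteness guaranteed by Lemma \ref{exist_unique_simple}. Hence it tends to $0$ as $K\to\infty$.

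As a cross-check, the convergence also follows independently of the identity. Céa's lemma, obtained from \eqref{Ellipticity_a_Periodic}--\eqref{Boundedness_a_Periodic}, gives
\begin{eqnarray*}
\|U-U_K\|_{\overline H^1_{P,0}}\le(\alpha_1/\alpha_0)\|U-\Pi_K U\|_{\overline H^1_{P,0}},
\end{eqnarray*}
and the right-hand side is the same vanishing tail.
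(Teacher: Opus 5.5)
Your Step~2 is a genuine gap, and it cannot be closed, because for non-constant $A$ the identity $U_K=\Pi_K U$ fails in general. Your own formula $a(e^{{\rm i}\boldsymbol{k}^\top\boldsymbol{y}},e^{{\rm i}\boldsymbol{l}^\top\boldsymbol{y}})=(P\boldsymbol{k})^\top(P\boldsymbol{l})\,\widehat A_{\boldsymbol{l}-\boldsymbol{k}}$ shows that the stiffness matrix couples modes with $|\boldsymbol{k}|>K$ to modes in $X_K$ whenever $\widehat A_{\boldsymbol{l}-\boldsymbol{k}}\neq 0$.

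Here is a concrete counterexample. Take $n=2$, $d=1$, $P=[1\ \sqrt{2}]$, $A(\boldsymbol{y})=6+\cos y_1$ (so $\widehat A_{(\pm1,0)}=\frac12$), $K=1$, $U=e^{2{\rm i}y_1}$, and define $F$ by the equation. Then $\Pi_1U=0$, but $(F,e^{{\rm i}y_1})=a(U,e^{{\rm i}y_1})=2\cdot 1\cdot\widehat A_{(-1,0)}=1\neq 0$, so $U_1\neq 0=\Pi_1U$.

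Your proposed remedy, arguing in the $\overline H^1_{P,0}$ inner product where exponentials are orthogonal, changes the problem. $\Pi_K$ is the orthogonal projection onto $X_K$ for $(P\nabla\cdot,P\nabla\cdot)$, whereas $U_K$ is the orthogonal projection of $U$ for $a(\cdot,\cdot)=(AP\nabla\cdot,P\nabla\cdot)$. These coincide only when $a$ is a multiple of the $\overline H^1_{P,0}$ inner product on the relevant modes, which essentially means constant $A$.

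The paper's proof has exactly the same defect. Testing both problems with $e^{{\rm i}\boldsymbol{k}^\top\boldsymbol{y}}$, $0<|\boldsymbol{k}|\le K$, gives
\[
\sum_{\boldsymbol{l}\neq\boldsymbol 0}\widehat A_{\boldsymbol{k}-\boldsymbol{l}}\,(P\boldsymbol{k})\cdot(P\boldsymbol{l})\,\widehat U_{\boldsymbol{l}}=\sum_{0<|\boldsymbol{l}|\le K}\widehat A_{\boldsymbol{k}-\boldsymbol{l}}\,(P\boldsymbol{k})\cdot(P\boldsymbol{l})\,(\widehat{U_K})_{\boldsymbol{l}},
\]
and ``comparing'' these yields $\widehat U_{\boldsymbol{k}}=(\widehat{U_K})_{\boldsymbol{k}}$ only if the matrix is diagonal. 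So the first assertion of the lemma needs an extra hypothesis such as $A\equiv{\rm const}$. Neither your argument nor the paper's establishes it as stated.

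The convergence assertion, by contrast, is true for general $A$, and your cross-check is a complete proof of it that does not use Step~2. Since $X_K\subset\overline H^1_{P,0}(\mathbb T^n)$ and $\Pi_KU\in X_K$, coercivity and continuity give Céa's bound $\|U-U_K\|_{\overline H^1_{P,0}}\le(\alpha_1/\alpha_0)\|U-\Pi_KU\|_{\overline H^1_{P,0}}$. Moreover, $\|U-\Pi_KU\|_{\overline H^1_{P,0}}^2=\sum_{|\boldsymbol{k}|>K}|P\boldsymbol{k}|^2|\widehat U_{\boldsymbol{k}}|^2$ is the tail of the convergent series $\|U\|^2_{\overline H^1_{P,0}}$. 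The weight there is $|P\boldsymbol{k}|^{2}$, as you wrote; the paper's display has $|P\boldsymbol{k}|^{-2}$, which is a typo. Make this the main argument for the second claim, and state $U_K=\Pi_KU$ only for constant coefficients. Strong convergence $U_K\to U$ in $\overline H^1_{P,0}$ is also all that Step~2 of the proof of Theorem~\ref{integer_regularity} actually uses.
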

\begin{proof}
Choosing $V=e^{{\rm i}\boldsymbol{k}^\top \boldsymbol{y}}$ in \eqref{Projection_Problem} and $V_K=e^{{\rm i}\boldsymbol{k}^\top \boldsymbol{y}}$ 
in \eqref{Projection_Problem_Galerkin} for $0<|\boldsymbol{k}|\leq K$, 
and then comparing the resulting identities, we obtain
\begin{eqnarray*}
\widehat{U}_{\boldsymbol{k}}=(\widehat{U_K})_{\boldsymbol{k}}, \ \ \ 0<|\boldsymbol{k}|\leq K.
\end{eqnarray*}
Hence $U_K=\Pi_K U$ and 
\begin{eqnarray*}
\|U-U_K\|_{\overline{H}_{P,0}^1}^2 = \|U-\Pi_K U\|_{\overline{H}_{P,0}^1}^2
=\sum_{|\boldsymbol{k}|>K}|P\boldsymbol{k}|^{-2} |\widehat{U}_{\boldsymbol{k}}|^2\rightarrow 0,\ \ \ {\rm as}\ K\rightarrow\infty.
\end{eqnarray*}
This completes the proof. 
\end{proof}

\begin{remark}\label{Remark_1}
Note that both the variational problem \eqref{Projection_Problem} 
and its corresponding Galerkin approximation 
problem (\ref{Projection_Problem_Galerkin}) 
are studied in the space $\overline{H}_{P,0}^1(\mathbb T^n)$. 
The key reason for doing this is exactly the statement of Lemma \ref{not_embedding_L2}: 
$\overline{H}_{P,0}^1(\mathbb T^n)$ cannot be continuously embedded into $L_0^2(\mathbb T^n)$.
Moreover, it indicates that the Poincar\'{e} inequality does not hold, 
and the eigenvalues of the following problem accumulate at $\boldsymbol{0}$: 
Find $(\lambda,U)\in \mathbb R\times \overline{H}_{P,0}^1(\mathbb T^n)$ such that
\begin{eqnarray*}
(P\nabla U,P\nabla V)=\lambda (U,V),
\ \ \ \forall V\in  \overline H^1_{P,0}(\mathbb T^n).
\end{eqnarray*}
\end{remark}

It follows from Lemma \ref{not_embedding_L2} that 
the solution $U\in \overline H^1_{P,0}(\mathbb T^n)$ 
of the variational problem \eqref{Projection_Problem} 
does not necessarily belong to $L_0^2(\mathbb T^n)$.
This naturally motivates us to investigate what conditions 
on $A$ and $F$ can ensure that the solution $U$ admits further regularity.
For convenience, we first introduce the following 
constant-coefficient directional derivatives
\begin{eqnarray*}
Y_j:=p_j\cdot\nabla,\ \ \ j=1,\cdots d,
\end{eqnarray*}
where $p_j\in \mathbb R^n$ denotes the $j$-th row of the matrix $P$.
We also introduce the multi-index notation $\alpha =(\alpha_1,\cdots,\alpha_d)\in \mathbb N_0^d$, $|\alpha|=\alpha_1+\cdots+\alpha_d$, 
$\alpha!=\alpha_1!\cdots\alpha_d!$ and $Y^{\alpha}:=Y^{\alpha_1}_1\cdots Y^{\alpha_d}_d$.

Associated with the directional derivatives $Y^\alpha$, we introduce 
the following norm for $r\geq 0$, and $s\in\mathbb R$
\begin{eqnarray*}
\|U\|_{Y^{\alpha,r}_s}^2&:=& \sum_{|\alpha|=r} \|Y^\alpha U\|_{\overline{H}_{P,0}^s}^2.
\end{eqnarray*}
This norm enjoys the properties stated in the following lemma.
\begin{lemma}\label{lemma_direct_norm_equal}
The norm $\|\cdot\|_{Y^{\alpha,r}_s}$ and $\|\cdot\|_{\overline{H}_{P,0}^r}$ are equivalent. 
More precisely, we have
\begin{eqnarray*}
(r!)^{-\frac{1}{2}} \|U\|_{\overline{H}_{P,0}^{r+s}} \leq \|U\|_{Y^{\alpha,r}_s}\leq \|U\|_{\overline{H}_{P,0}^{r+s}}.
\end{eqnarray*}
\end{lemma}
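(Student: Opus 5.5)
The plan is to work entirely on the Fourier side. The operator $Y_j = p_j\cdot\nabla$ acts diagonally on Fourier modes: $Y_j e^{{\rm i}\boldsymbol{k}^\top\boldsymbol{y}} = {\rm i}(p_j\cdot\boldsymbol{k})\, e^{{\rm i}\boldsymbol{k}^\top\boldsymbol{y}} = {\rm i}(P\boldsymbol{k})_j\, e^{{\rm i}\boldsymbol{k}^\top\boldsymbol{y}}$. Consequently
\[
\widehat{(Y^\alpha U)}_{\boldsymbol{k}} = {\rm i}^{|\alpha|}\,(P\boldsymbol{k})^\alpha\,\widehat U_{\boldsymbol{k}},
\qquad (P\boldsymbol{k})^\alpha := \prod_{j=1}^d (P\boldsymbol{k})_j^{\alpha_j}.
\]
Substituting this into the definition of the norm and exchanging the finite sum over $|\alpha|=r$ with the sum over $\boldsymbol{k}$, I will write
\[
\|U\|_{Y^{\alpha,r}_s}^2 = \sum_{\boldsymbol{k}\in\mathbb Z^n\setminus\{\boldsymbol 0\}} |P\boldsymbol{k}|^{2s}\Big(\sum_{|\alpha|=r} (P\boldsymbol{k})^{2\alpha}\Big)|\widehat U_{\boldsymbol{k}}|^2 .
\]
This reduces the whole statement to a pointwise comparison, for each $\boldsymbol{\xi}=P\boldsymbol{k}\in\mathbb R^d$, of $S_r(\boldsymbol{\xi}):=\sum_{|\alpha|=r}\boldsymbol{\xi}^{2\alpha}$ with $|\boldsymbol{\xi}|^{2r}$. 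Here $r$ is implicitly a nonnegative integer.

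The key tool for that comparison is the multinomial theorem:
\[
|\boldsymbol{\xi}|^{2r} = \Big(\sum_{j=1}^d \xi_j^2\Big)^r = \sum_{|\alpha|=r}\frac{r!}{\alpha!}\,\boldsymbol{\xi}^{2\alpha}.
\]
Since $1\le r!/\alpha!\le r!$ for every $|\alpha|=r$ and each term $\boldsymbol{\xi}^{2\alpha}$ is nonnegative, this gives
\[
S_r(\boldsymbol{\xi})\le|\boldsymbol{\xi}|^{2r}\le r!\,S_r(\boldsymbol{\xi}).
\]
Multiplying by $|P\boldsymbol{k}|^{2s}|\widehat U_{\boldsymbol{k}}|^2$ and summing over $\boldsymbol{k}\neq\boldsymbol 0$ yields $\|U\|_{Y^{\alpha,r}_s}^2\le\|U\|_{\overline H_{P,0}^{r+s}}^2\le r!\,\|U\|_{Y^{\alpha,r}_s}^2$. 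Taking square roots gives exactly the claimed two-sided bound with constants $1$ and $(r!)^{-1/2}$.

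The only delicate point is making sense of $Y^\alpha U$ as an element of $\overline H_{P,0}^s$ for $U$ in a general (possibly distributional) class. My plan is to define $Y^\alpha$ through its Fourier multiplier, which is consistent with the classical derivative on trigonometric polynomials. I will then note that all the sums involved have nonnegative terms, so the identities hold in $[0,\infty]$; in particular one side is finite iff the other is. I will also observe that the zero mode is excluded on both sides, so no issue arises at $\boldsymbol{k}=\boldsymbol 0$. Finally, the statement should be read with $\|\cdot\|_{\overline H_{P,0}^{r+s}}$, and the equivalence is between $\|\cdot\|_{Y^{\alpha,r}_s}$ and that norm.
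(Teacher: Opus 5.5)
Your proposal is correct and follows the paper's proof essentially verbatim. You use the Fourier symbol of $Y^\alpha$ to reduce the claim to the pointwise bound $\frac{1}{r!}|\boldsymbol{\xi}|^{2r}\le\sum_{|\alpha|=r}\boldsymbol{\xi}^{2\alpha}\le|\boldsymbol{\xi}|^{2r}$ at $\boldsymbol{\xi}=P\boldsymbol{k}$, obtained from the multinomial expansion, and your remarks on defining $Y^\alpha$ as a Fourier multiplier and reading the statement with $\overline{H}_{P,0}^{r+s}$ are sensible clarifications that the paper leaves implicit.
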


\begin{proof}
For any $\boldsymbol{x}\in \mathbb R^d$, and any integer $r\geq 0$, by the polynomial expansion, we obtain
\begin{eqnarray*}
|\boldsymbol{x}|^{2r}=\sum_{|\alpha|=r}\frac{r!}{\alpha!}|\boldsymbol{x}^{\alpha}|^2.
\end{eqnarray*}
Then it is easy to have 
\begin{eqnarray*}
\frac{1}{r!}|\boldsymbol{x}|^{2r}\leq \sum_{|\alpha|=r}|\boldsymbol{x}^{\alpha}|^2\leq |\boldsymbol{x}|^{2r}.
\end{eqnarray*}
Taking $\boldsymbol{x}=P\boldsymbol{k}$ leads to the following inequality
\begin{eqnarray*}
\frac{1}{r!} \|U\|_{\overline{H}_{P,0}^{r+s}}^2 &\leq& \|U\|_{Y^{\alpha,r}_s}^2
= \sum_{|\alpha|=r}\|Y^{\alpha} U\|_{\overline{H}_{P,0}^s}^2\\
&=& \sum_{\boldsymbol{k}\in \mathbb Z^n\setminus\{\boldsymbol 0\}}|P\boldsymbol{k}|^{2s} 
\Big(\sum_{|\alpha|=r}|(P\boldsymbol{k})^{\alpha}|^2 \Big) |\widehat{U}_{\boldsymbol{k}}|^2
\leq \|U\|_{\overline{H}_{P,0}^{r+s}}^2.
\end{eqnarray*}
This completes the proof of the lemma.
\end{proof}

In order to derive higher regularity for $U$, 
in addition to assuming that $A$ is bounded above and below by positive constants, 
we impose the following additional assumption.

\begin{assumption}\label{assume_A}
Assume that there exists $N\geq 0$ such that the Fourier coefficients of $A$ 
satisfy the following decay rate 
\begin{eqnarray*}
|\widehat{A}_{\boldsymbol{k}}|\leq C_{A}(1+|\boldsymbol{k}|)^{-N},\ \ \ \forall \boldsymbol{k}\in\mathbb Z^n,
\end{eqnarray*}
where $C_{A}$ is a positive constant independent of $\boldsymbol{k}$.
\end{assumption}

Assumption \ref{assume_A} implies that 
the directional derivatives of $A$ can be bounded in terms of $N$.
More precisely, we have the following lemma.

\begin{lemma}\label{lemma_bounded_Y_alpha_A}
Let $A$ satisfy Assumption \ref{assume_A}, and suppose that $N>n+m+1$.
Then, for any $\beta\in \mathbb N_0^d$ with $|\beta|\leq m+1$, we have
\begin{eqnarray*}
\|Y^{\beta}A\|_{L^\infty}\leq C_{\lambda,A,|\beta|}<\infty,
\end{eqnarray*}
where
\begin{eqnarray*}
C_{\lambda,A,|\beta|}:=C_{\lambda}^{|\beta|} C_A\sum_{\boldsymbol{k}\in\mathbb Z^n}(1+|\boldsymbol{k}|)^{|\beta|-N},\ \ \ |\beta|<N-n.
\end{eqnarray*}
\end{lemma}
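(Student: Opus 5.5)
The plan is to work entirely on the Fourier side. Write $A(\boldsymbol y)=\sum_{\boldsymbol k\in\mathbb Z^n}\widehat A_{\boldsymbol k}e^{{\rm i}\boldsymbol k^\top\boldsymbol y}$, differentiate term by term, and bound the resulting series in sup norm by the sum of the absolute values of its coefficients. The first step is to compute the action of $Y^\beta$ on a single exponential. Since $Y_j=p_j\cdot\nabla$ with $p_j$ the $j$-th row of $P$, we have $Y_j e^{{\rm i}\boldsymbol k^\top\boldsymbol y}={\rm i}(P\boldsymbol k)_j e^{{\rm i}\boldsymbol k^\top\boldsymbol y}$. Hence
\begin{eqnarray*}
Y^\beta e^{{\rm i}\boldsymbol k^\top\boldsymbol y}={\rm i}^{|\beta|}(P\boldsymbol k)^\beta e^{{\rm i}\boldsymbol k^\top\boldsymbol y},
\end{eqnarray*}
and formally
\begin{eqnarray*}
Y^\beta A=\sum_{\boldsymbol k}{\rm i}^{|\beta|}(P\boldsymbol k)^\beta\widehat A_{\boldsymbol k}e^{{\rm i}\boldsymbol k^\top\boldsymbol y}.
\end{eqnarray*}

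The second step bounds each coefficient. Using $|(P\boldsymbol k)^\beta|\le|P\boldsymbol k|^{|\beta|}$, which follows since each $|(P\boldsymbol k)_j|\le|P\boldsymbol k|$, together with \eqref{Pk_leq_k} and Assumption \ref{assume_A}, we get
\begin{eqnarray*}
|(P\boldsymbol k)^\beta\widehat A_{\boldsymbol k}|\le C_\lambda^{|\beta|}|\boldsymbol k|^{|\beta|}C_A(1+|\boldsymbol k|)^{-N}\le C_\lambda^{|\beta|}C_A(1+|\boldsymbol k|)^{|\beta|-N}.
\end{eqnarray*}
Summing over $\boldsymbol k$ then gives exactly the constant $C_{\lambda,A,|\beta|}$, and the sup-norm bound follows from $|e^{{\rm i}\boldsymbol k^\top\boldsymbol y}|=1$.

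The third step, which is the only real point to check, is finiteness of $\sum_{\boldsymbol k\in\mathbb Z^n}(1+|\boldsymbol k|)^{|\beta|-N}$ and the legitimacy of termwise differentiation. The lattice sum converges iff $N-|\beta|>n$, by comparison with $\int_{\mathbb R^n}(1+|\boldsymbol y|)^{-(N-|\beta|)}d\boldsymbol y$ or by counting $O(R^{n-1})$ lattice points on shells of radius $R$. Since $|\beta|\le m+1$ and $N>n+m+1$, we have $N-|\beta|>n$. This applies to every $\beta'$ with $|\beta'|\le|\beta|$ as well, so all the series for $Y^{\beta'}A$ converge absolutely and uniformly (Weierstrass M-test). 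Consequently $A\in C^{m+1}$ along the directions $Y_j$, and the termwise derivatives coincide with the genuine (or distributional) derivatives. The sup of the uniformly convergent series is bounded by the sum of the absolute values of its terms, which concludes the proof.
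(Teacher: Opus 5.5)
Your proposal is correct and takes the same route as the paper's proof. You expand $A$ in its Fourier series and apply $Y^\beta$ termwise, which produces the symbol ${\rm i}^{|\beta|}(P\boldsymbol{k})^\beta$. You then bound $|(P\boldsymbol{k})^\beta|\le C_\lambda^{|\beta|}(1+|\boldsymbol{k}|)^{|\beta|}$ via \eqref{Pk_leq_k}, apply Assumption~\ref{assume_A}, and sum using $N-|\beta|>n$. Your explicit justification of termwise differentiation (Weierstrass M-test) and of the convergence of the lattice sum fills in details the paper leaves implicit.
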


\begin{proof}
By a direct computation with the Fourier expansion, we obtain
\begin{eqnarray*}
Y^{\beta}A=\sum_{\boldsymbol{k}\in\mathbb Z^n} {\rm i}^{|\beta|} (P\boldsymbol{k})^{\beta} 
\widehat{A}_{\boldsymbol{k}} e^{{\rm i}\boldsymbol{k}^\top \boldsymbol{y}}.
\end{eqnarray*}
Combining with \eqref{Pk_leq_k}, Assumption \ref{assume_A} and $N-|\beta|>n$ leads to 
the following inequalities 
\begin{eqnarray*}
\|Y^{\beta}A\|_{L^\infty}&\leq& \sum_{\boldsymbol{k}\in\mathbb Z^n} |P\boldsymbol{k}|^{|\beta|} |\widehat{A}_{\boldsymbol{k}}|
\leq C_{\lambda}^{|\beta|} \sum_{\boldsymbol{k}\in\mathbb Z^n} |\boldsymbol{k}|^{|\beta|} |\widehat{A}_{\boldsymbol{k}}|\\
&\leq& C_{\lambda}^{|\beta|} \sum_{\boldsymbol{k}\in\mathbb Z^n} (1+|\boldsymbol{k}|)^{|\beta|} |\widehat{A}_{\boldsymbol{k}}|
\leq C_{\lambda}^{|\beta|} C_A \sum_{\boldsymbol{k}\in\mathbb Z^n} (1+|\boldsymbol{k}|)^{|\beta|-N}<\infty.
\end{eqnarray*}
This completes the proof of the lemma.
\end{proof}

\begin{theorem}\label{integer_regularity}
Let $A$ satisfies the upper and lower bound condition \eqref{bound_A} 
as well as Assumption \ref{assume_A}, and suppose that $m\in \mathbb N$ and $N>n+m+1$.
Then for every $F\in \overline{H}_{P,0}^{-1}(\mathbb T^n)\cap \overline{H}_{P,0}^{m}(\mathbb T^n)$, 
the variational problem \eqref{Projection_Problem} 
has a unique solution $U\in \overline{H}_{P,0}^{1}(\mathbb T^n)
\cap \overline{H}_{P,0}^{m+2}(\mathbb T^n)$.
Moreover, the following estimate holds
\begin{eqnarray*}
\|U\|_{\overline{H}_{P,0}^{m+2}}\leq C_m \left(\|F\|_{\overline{H}_{P,0}^{-1}} 
+ \|F|_{\overline{H}_{P,0}^{m}}\right),
\end{eqnarray*}
where 
\begin{eqnarray*}
C_m=\alpha_0^{-1}\sqrt{(m+1)!} \sqrt{d} M_m,
\end{eqnarray*}
and 
\begin{eqnarray*}
M_0 &=& \max\{\alpha_0^{-1}C_{\lambda,A,1}, 1\},\\
M_m&=&\max\left\{\alpha_0^{-1}C_{\lambda,A,m+1}+2\sum_{j=1}^m\sqrt{\binom{j+d-1}{d-1}} C_{\lambda,A,m+1-j}C_{j-1},\right.\\
&&\quad\quad\quad \left. 1+\sum_{j=1}^m\sqrt{\binom{j+d-1}{d-1}} C_{\lambda,A,m+1-j}C_{j-1}\right\},
\ \ \ m\geq 1. 
\end{eqnarray*}
\end{theorem}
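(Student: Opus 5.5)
Existence, uniqueness and the bound $\|U\|_{\overline{H}_{P,0}^1}\le \alpha_0^{-1}\|F\|_{\overline{H}_{P,0}^{-1}}$ come directly from Lemma \ref{exist_unique_simple}, since $F\in\overline{H}_{P,0}^{-1}(\mathbb T^n)$. The remaining task is the bound in $\overline{H}_{P,0}^{m+2}$. I will prove it by induction on $m$, starting at $m=0$. The tool is a difference-quotient–free ``directional derivative'' argument in the constant-coefficient fields $Y_j=p_j\cdot\nabla$. These commute with each other and act diagonally on Fourier modes, multiplying $\widehat U_{\boldsymbol k}$ by ${\rm i}(P\boldsymbol k)_j$. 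Hence $Y^\alpha$ maps $\overline H_{P,0}^{s}$ into $\overline H_{P,0}^{s-|\alpha|}$, and by Lemma \ref{lemma_direct_norm_equal} it suffices to control $\sum_{|\alpha|=m+1}\|Y^\alpha U\|^2_{\overline H_{P,0}^1}$. This gives $\|U\|_{\overline H^{m+2}_{P,0}}\le\sqrt{(m+1)!}\,(\cdots)$, which is the origin of the factor $\sqrt{(m+1)!}$ in $C_m$.

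The key step is the commuted equation. For $|\alpha|=r\le m+1$, I test \eqref{Projection_Problem} with $V=(-1)^{r}Y^{\alpha}W$, first for trigonometric polynomials $W\in X_K$, and move $Y^\alpha$ across, using that $Y_j$ is skew-adjoint and commutes with $P\nabla$. Leibniz' rule on the product $A\,P\nabla U$ then gives
\begin{eqnarray*}
a(Y^\alpha U,W)=\langle Y^\alpha F,W\rangle-\sum_{0<\beta\le\alpha}\binom{\alpha}{\beta}\big((Y^\beta A)\,P\nabla Y^{\alpha-\beta}U,\,P\nabla W\big).
\end{eqnarray*}
So $Y^\alpha U$ solves a problem of type \eqref{Projection_Problem} with a modified right-hand side. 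By Lemma \ref{exist_unique_simple} (coercivity \eqref{Ellipticity_a_Periodic}),
\begin{eqnarray*}
\|Y^\alpha U\|_{\overline H^1_{P,0}}\le \alpha_0^{-1}\Big(\|Y^\alpha F\|_{\overline H^{-1}_{P,0}}+\sum_{\beta}\binom{\alpha}{\beta}\|Y^\beta A\|_{L^\infty}\|Y^{\alpha-\beta}U\|_{\overline H^1_{P,0}}\Big).
\end{eqnarray*}
Lemma \ref{lemma_bounded_Y_alpha_A} applies because $|\beta|\le m+1<N-n$, so each $\|Y^\beta A\|_{L^\infty}$ is bounded by $C_{\lambda,A,|\beta|}$. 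For the source term, $\|Y^\alpha F\|_{\overline H^{-1}_{P,0}}\le\|F\|_{\overline H^{m}_{P,0}}$ by Lemma \ref{lemma_direct_norm_equal}.

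The lower-order terms $\|Y^{\alpha-\beta}U\|_{\overline H^1_{P,0}}$ with $|\alpha-\beta|=j-1<m+1$ are controlled by the induction hypothesis $\|U\|_{\overline H^{j+1}_{P,0}}\le C_{j-1}(\|F\|_{-1}+\|F\|_{j-1})$. To pass from $\|F\|_{\overline H^{j-1}_{P,0}}$ to $\|F\|_{\overline H^{-1}_{P,0}}+\|F\|_{\overline H^{m}_{P,0}}$ I use Corollary \ref{eq_corollary_cap_embedding}, which is why only the two endpoint norms of $F$ appear. The multi-index sums are then counted: summing over $|\alpha|=m+1$ and grouping the $\beta$ by $|\beta|$ produces the factors $\sqrt{\binom{j+d-1}{d-1}}$ (the number of multi-indices of a given length) and $\sqrt d$, by Cauchy--Schwarz. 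Matching the coefficients of $\|F\|_{-1}$ and $\|F\|_m$ separately gives the two entries in the max defining $M_m$. The base case $m=0$ has only the single $\beta=\alpha$ term, giving $M_0$.

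The main obstacle is justifying the commutation rigorously, since a priori $Y^\alpha U$ is not known to lie in $\overline H^1_{P,0}$. I plan to run the argument on the Galerkin solutions $U_K=\Pi_KU$ from the preceding lemma, where everything is a finite sum. I then need uniform-in-$K$ bounds and pass to the limit by weak lower semicontinuity of the Hilbert norm. The delicate point is that $X_K$ is not invariant under multiplication by $A$, so the commuted identity for $U_K$ carries a projection error. Alternatively, I can work directly at the Fourier level with the truncation $\Pi_K Y^\alpha U$ as test function and let $K\to\infty$. If that fails, I would test with $Y^\alpha\Pi_K Y^\alpha U$ and absorb terms using monotone convergence of $\|\Pi_K Y^\alpha U\|_{\overline H^1_{P,0}}$. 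I expect the constant bookkeeping to be routine once this is settled.
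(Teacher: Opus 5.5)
Your architecture is the paper's: Galerkin solutions $U_K\in X_K$, the commuted Leibniz identity, testing with $Y^\alpha U_K$, coercivity with Lemma~\ref{lemma_bounded_Y_alpha_A}, and induction on the order using \eqref{eq_corollary_cap_embedding} to reduce $\|F\|_{\overline H^{j-1}_{P,0}}$ to the two endpoint norms. It finishes with a weak limit in $\overline H^{m+2}_{P,0}$ identified through Fourier coefficients. What is missing is the justification of the commuted identity, which you leave open and misdiagnose.

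There is no projection error. Since $Y^\alpha$ is a Fourier multiplier, $Y^\alpha X_K\subset X_K$. You should therefore test the \emph{Galerkin} equation \eqref{Projection_Problem_Galerkin}, not \eqref{Projection_Problem}, with $V_K=(-1)^{|\alpha|}Y^\alpha W$, $W\in X_K$. All sums involved are finite, because $U_K$ and $W$ are trigonometric polynomials. Moving $Y^\alpha$ across and applying Leibniz gives, exactly and for every $W\in X_K$,
\[
a(Y^\alpha U_K,W)=\langle Y^\alpha F,W\rangle-\sum_{0<\beta\le\alpha}\binom{\alpha}{\beta}\big((Y^\beta A)\,P\nabla Y^{\alpha-\beta}U_K,\,P\nabla W\big).
\]
This is \eqref{eq_Leibniz} with the binomial coefficients restored. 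The fact that $A\,P\nabla U_K\notin X_K$ is irrelevant, since the identity is only used with $W=Y^\alpha U_K\in X_K$. From there your estimates hold uniformly in $K$.

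The error you perceive comes from identifying $U_K$ with $\Pi_K U$. That identity (the lemma preceding Remark~\ref{Remark_1}) fails for nonconstant $A$. Testing with $e^{{\rm i}\boldsymbol k^\top\boldsymbol y}$ produces $\sum_{\boldsymbol\ell}\widehat A_{\boldsymbol k-\boldsymbol\ell}\,(P\boldsymbol k)\cdot(P\boldsymbol\ell)\,\widehat U_{\boldsymbol\ell}$, which couples in modes with $|\boldsymbol\ell|>K$. Hence $\Pi_K U$ does not solve the Galerkin system and obeys no exact commuted identity on $X_K$. Work with the genuine Galerkin solution instead. The convergence $U_K\to U$ in $\overline H^1_{P,0}$, needed to identify the limit, follows from C\'ea's lemma via (\ref{Ellipticity_a_Periodic})--(\ref{Boundedness_a_Periodic}) and $\|U-\Pi_K U\|_{\overline H^1_{P,0}}\to0$.

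Your fallbacks are unnecessary and do not close as stated. Testing the continuous commuted identity with $\Pi_K Y^\alpha U$ leaves the cross term $\big(A P\nabla(I-\Pi_K)Y^\alpha U,\,P\nabla\Pi_K Y^\alpha U\big)$. Bounding it needs control of $Y^\alpha U$ outside $X_K$, i.e.\ the very regularity being proved.

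Finally, the bookkeeping will not reproduce exactly $M_m$ and $C_m$. Your (correct) formula carries $\binom{\alpha}{\beta}$. Summing over the $\binom{m+d}{d-1}$ multi-indices with $|\alpha|=m+1$ costs a factor $\sqrt{\binom{m+d}{d-1}}$, which exceeds $\sqrt d$ once $m\ge1$ and $d\ge2$. The stated constants arise from omitting both factors. Your argument therefore gives the regularity and an estimate of the same form, but with larger constants.
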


\begin{proof}
The proof is divided into two steps. 
In Step 1, we establish uniform $\overline{H}_{P,0}^{r+2}$-estimates 
for the Galerkin approximation $U_K$ by induction on $r$. 
In Step 2, we pass to the limit as $K\to\infty$ and identify the limit with the solution $U$.

\medskip
\noindent\textbf{Step 1. Uniform estimates for $U_K$.}
Since the constant-coefficient directional derivatives $Y^\alpha$ commute with $\nabla$, Leibniz's formula gives
\begin{eqnarray}\label{eq_Leibniz}
(AP\nabla (Y^{\alpha} U_K),P\nabla V_K)=\langle Y^{\alpha} F, V_K \rangle-\sum_{0<\beta\leq \alpha} 
\big((Y^{\beta} A) P\nabla(Y^{\alpha-\beta}U_K),P\nabla V_K \big).
\end{eqnarray}
The it is desired to prove 
for each $r=0,1,\dots,m$,
\begin{eqnarray}\label{induction_conclusion}
\|U_K\|_{\overline{H}_{P,0}^{r+2}}\leq C_r \left(\|F\|_{\overline{H}_{P,0}^{-1}} 
+ \|F|_{\overline{H}_{P,0}^{r}} \right).
\end{eqnarray}
We will prove (\ref{induction_conclusion}) by induction on $r$.

We begin with the case $r=0$.
Let $|\alpha|=1$, and choose $V_K=Y^{\alpha}U_K$ in \eqref{eq_Leibniz}.
Then the coercivity of the bilinear form implies 
\begin{eqnarray*}
\alpha_0\|Y^{\alpha}U_K\|_{\overline{H}_{P,0}^1}^2 \leq \|Y^{\alpha}F\|_{\overline{H}_{P,0}^{-1}} \|Y^{\alpha}U_K\|_{\overline{H}_{P,0}^1}
+ \|Y^\alpha A\|_{L^\infty} \|U_K\|_{\overline{H}_{P,0}^1} \|Y^{\alpha}U_K\|_{\overline{H}_{P,0}^1}.
\end{eqnarray*}
Hence, after simplification, the above inequalities together with 
Lemma \ref{lemma_bounded_Y_alpha_A} yields 
\begin{eqnarray}\label{induction_r_0_pre}
\|Y^{\alpha}U_K\|_{\overline{H}_{P,0}^1} \leq \alpha_0^{-1}
\left(\|Y^{\alpha}F\|_{\overline{H}_{P,0}^{-1}} 
+ C_{\lambda,A,1}\|U_K\|_{\overline{H}_{P,0}^1}\right).
\end{eqnarray}
By Lemma \ref{lemma_direct_norm_equal}, we have $\|Y^{\alpha}F\|_{\overline{H}_{P,0}^{-1}}\leq \|F\|_{L^2}$.
Combining this with Lemma \ref{exist_unique_simple} and \eqref{induction_r_0_pre} 
leads to the following estimates 
\begin{eqnarray}\label{induction_r_0}
\|Y^{\alpha}U_K\|_{\overline{H}_{P,0}^1} \leq \alpha_0^{-1} 
\max\{\alpha_0^{-1}C_{\lambda,A,1}, 1\}( \|F\|_{\overline{H}_{P,0}^{-1}} + \|F\|_{L^2} ).
\end{eqnarray}
After squaring both sides of \eqref{induction_r_0} 
and summing over $\alpha$ with $|\alpha|=1$, it follows from Lemma \ref{lemma_direct_norm_equal} that
\begin{eqnarray*}
\|U_K\|_{\overline{H}_{P,0}^{2}}\leq \alpha_0^{-1} \sqrt{d} \max\left\{\alpha_0^{-1}C_{\lambda,A,1}, 1\right\} 
\left(\|F\|_{\overline{H}_{P,0}^{-1}} + \|F\|_{L^2}\right).
\end{eqnarray*}
Thus, the desired result is valid for the case $r=0$.

Assume that the estimate \eqref{induction_conclusion} holds for $r=0,1,\dots,m-1$.
We now prove that it also holds for $r=m$.
Let $|\alpha|=m+1$, and again choose $V_K=Y^{\alpha}U_K$ in \eqref{eq_Leibniz}.
Then
\begin{eqnarray*}
\alpha_0 \|Y^{\alpha}U_K\|_{\overline{H}_{P,0}^1}^2\leq \|Y^{\alpha}F\|_{\overline{H}_{P,0}^{-1}} \|Y^{\alpha}U_K\|_{\overline{H}_{P,0}^1}
+ \sum_{0<\beta\leq\alpha} \|Y^{\beta}A\|_{L^\infty} \| Y^{\alpha-\beta} U_K\|_{\overline{H}_{P,0}^{1}} \|Y^{\alpha}U_K\|_{\overline{H}_{P,0}^1}.
\end{eqnarray*}
By Lemma \ref{lemma_bounded_Y_alpha_A}, after simplification and rearrangement, we obtain
\begin{eqnarray}\label{induction_r_m_pre}
&&\|Y^{\alpha}U_K\|_{\overline{H}_{P,0}^1}\leq \alpha_0^{-1} 
\left(\|Y^{\alpha}F\|_{\overline{H}_{P,0}^{-1}} 
+ \sum_{0<\beta\leq\alpha} C_{\lambda,A,|\beta|} 
\| Y^{\alpha-\beta} U_K\|_{\overline{H}_{P,0}^{1}}\right)\nonumber\\
&&=\alpha_0^{-1} \left( \|Y^{\alpha}F\|_{\overline{H}_{P,0}^{-1}} 
+ C_{\lambda,A,m+1}\|U_K\|_{\overline{H}_{P,0}^{1}} 
+ \sum_{0<\beta<\alpha} C_{\lambda,A,|\beta|} 
\| Y^{\alpha-\beta} U_K\|_{\overline{H}_{P,0}^{1}}\right). 
\end{eqnarray}
The first term on the right-hand side of \eqref{induction_r_m_pre} admits the upper bound
\begin{eqnarray*}
\|Y^{\alpha}F\|_{\overline{H}_{P,0}^{-1}}^2 = \sum_{\boldsymbol{k}\in\mathbb Z^n\setminus\{\boldsymbol 0\}} 
|P\boldsymbol{k}|^{-2} |(P\boldsymbol{k})^{\alpha}|^2 |\widehat{F}_{\boldsymbol{k}}|^2 
\leq C \sum_{\boldsymbol{k}\in\mathbb Z^n\setminus\{\boldsymbol 0\}} |P\boldsymbol{k}|^{2r} |\widehat{F}_{\boldsymbol{k}}|^2 = C \|F\|_{\overline{H}_{P,0}^m}^2.
\end{eqnarray*}
Since $|\alpha|=m+1$, the condition $0<\beta<\alpha$ implies that $0<|\alpha-\beta|\leq m$.
Therefore, the last term on the right-hand side of \eqref{induction_r_m_pre} satisfies
\begin{eqnarray*}
\sum_{0<\beta<\alpha} C_{\lambda,A,|\beta|} \| Y^{\alpha-\beta} U_K\|_{\overline{H}_{P,0}^{1}}
&=&\sum_{0< |\gamma|\leq m} C_{\lambda,A,m+1-|\gamma|} \| Y^{\gamma} U_K\|_{\overline{H}_{P,0}^{1}}\\
&=&\sum_{j=1}^m \sum_{|\gamma|=j} C_{\lambda,A,m+1-|\gamma|} \|Y^{\gamma} U_K\|_{\overline{H}_{P,0}^{1}}\\
&\leq& \sum_{j=1}^m \left(\sum_{|\gamma|=j} C_{\lambda,A,m+1-|\gamma|}^2 \right)^{\frac{1}{2}}  
\left(\sum_{|\gamma|=j} \|Y^{\gamma} U_K\|_{\overline{H}_{P,0}^{1}}^2 \right)^{\frac{1}{2}}\\
&\leq& \sum_{j=1}^m \sqrt{\binom{j+d-1}{d-1}} C_{\lambda,A,m+1-j} \|U\|_{\overline{H}_{P,0}^{j+1}}.
\end{eqnarray*}
By the induction hypothesis and Lemma \ref{lemma_cap_embedding} for $j=1,\cdots,m$, we have
\begin{eqnarray*}
\|U_K\|_{\overline{H}_{P,0}^{j+1}} &\leq& C_{j-1} 
\left(\|F\|_{\overline{H}_{P,0}^{-1}} + \|F\|_{\overline{H}_{P,0}^{j-1}}\right)\\
&\leq& C_{j-1} \left(2\|F\|_{\overline{H}_{P,0}^{-1}} + \|F\|_{\overline{H}_{P,0}^{m}}\right).
\end{eqnarray*}
Accordingly, \eqref{induction_r_m_pre} may be rearranged to the following
form
\begin{eqnarray}\label{induction_r_m}
\|Y^{\alpha}U_K\|_{\overline{H}_{P,0}^1}\leq \alpha_0^{-1} M_m 
\left(\|F\|_{\overline{H}_{P,0}^{-1}} + \|F|_{\overline{H}_{P,0}^{m}}\right).
\end{eqnarray}
Squaring both sides of inequality \eqref{induction_r_m} and summing 
over $\alpha$ with $|\alpha|=m+1$, it follows from Lemma \ref{lemma_direct_norm_equal} that
\begin{eqnarray*}
\|U_K\|_{\overline{H}_{P,0}^{m+2}}\leq \alpha_0^{-1} \sqrt{(m+1)!} M_m 
\left(\|F\|_{\overline{H}_{P,0}^{-1}} + \|F|_{\overline{H}_{P,0}^{m}}\right).
\end{eqnarray*}
This proves \eqref{induction_conclusion} for $r=m$, and hence completes the induction step.

\medskip
\noindent\textbf{Step 2. Passage to the limit as $K\rightarrow \infty$.}
By Step 1, the sequence $\{U_K\}$ is bounded in $\overline{H}_{P,0}^{m+2}(\mathbb T^n)$. 
Hence there exist a subsequence, still denoted by $\{U_K\}$, and some $\widetilde U\in \overline{H}_{P,0}^{m+2}(\mathbb T^n)$ such that
\begin{eqnarray}\label{weakly_convergence}
U_{K}\rightharpoonup \widetilde{U},\ \ \ {\rm weakly\ in}\ \overline{H}_{P,0}^{m+2}(\mathbb T^n).
\end{eqnarray}
On the other hand, we already know that
\begin{eqnarray}\label{strongly_convergence}
U_K\rightarrow U,\ \ \ {\rm strongly\ in}\ \overline{H}_{P,0}^{1}(\mathbb T^n).
\end{eqnarray}
It remains to identify $\widetilde U$ with $U$.
For each fixed $\boldsymbol{k}\in\mathbb Z^n\setminus\{0\}$, let us define
\begin{eqnarray*}
L_{\boldsymbol{k}}(V):=\widehat{V}_{\boldsymbol{k}}.
\end{eqnarray*}
Since
\begin{eqnarray*}
|\widehat{V}_{\boldsymbol{k}}|\leq |P\boldsymbol{k}|^{-(m+2)} \|V\|_{\overline{H}_{P,0}^{m+2}},
\end{eqnarray*}
the functional $L_{\boldsymbol{k}}$ is continuous on $\overline{H}_{P,0}^{m+2}(\mathbb T^n)$. 
Therefore, the weak convergence in $\overline{H}_{P,0}^{m+2}$ indicates that
\begin{eqnarray*}
(\widehat{U_K})_{\boldsymbol{k}} \to \widehat{\widetilde U}_{\boldsymbol{k}}.
\end{eqnarray*}
Moreover, $L_{\boldsymbol{k}}$ is also continuous on $\overline{H}_{P,0}^{1}(\mathbb T^n)$, so the strong convergence in $\overline{H}_{P,0}^{1}$ yields
\begin{eqnarray*}
(\widehat{U_K})_{\boldsymbol{k}} \to \widehat U_{\boldsymbol{k}}.
\end{eqnarray*}
Consequently, $\widehat{\widetilde U}_{\boldsymbol{k}}=\widehat U_{\boldsymbol{k}}$ for every $\boldsymbol{k}\in\mathbb Z^n\setminus\{0\}$, and hence $\widetilde U=U$.
Thus $U\in \overline{H}_{P,0}^{m+2}(\mathbb T^n)$.

Finally, by weak lower semicontinuity of the norm,
\begin{eqnarray*}
\|U\|_{\overline{H}_{P,0}^{m+2}}= \liminf_{j\rightarrow\infty} \|U_{K_j}\|_{\overline{H}_{P,0}^{m+2}} 
\leq C_m \left(\|F\|_{\overline{H}_{P,0}^{-1}} + \|F|_{\overline{H}_{P,0}^{m}}\right).
\end{eqnarray*}
This completes the proof.
\end{proof}

In the proof of Theorem \ref{integer_regularity}, the case where $m$ is an integer was established.
By interpolation space theory, the corresponding regularity result for non-integer regularity can be obtained, 
as stated in the following corollary.

\begin{corollary}\label{real_regularity}
Under the assumptions of Theorem \ref{integer_regularity}, let $s\geq 0$.
If $F\in \overline{H}_{P,0}^{-1}(\mathbb T^n)\cap \overline{H}_{P,0}^{s}(\mathbb T^n)$, then the unique variational 
solution $U\in H_{P,0}^{1}(\mathbb T^n)$ of \eqref{Projection_Problem} satisfies $U\in \overline{H}_{P,0}^{s+2}(\mathbb T^n)$.
\end{corollary}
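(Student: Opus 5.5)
The plan is to interpolate between the integer cases of Theorem \ref{integer_regularity}. Since all spaces here are weighted $\ell^2$ spaces of Fourier coefficients, complex interpolation can be done by hand. If $s$ is an integer, Theorem \ref{integer_regularity} applies directly. So assume $m<s<m+1$ with $m=\lfloor s\rfloor\ge 0$, and set $\theta=s-m\in(0,1)$. The hypothesis $N>n+m+2$ is needed to invoke Theorem \ref{integer_regularity} at level $m+1$. I read ``the assumptions of Theorem \ref{integer_regularity}'' as allowing this, i.e. $N>n+\lceil s\rceil+1$. Let $S$ be the solution operator $F\mapsto U$ of \eqref{Projection_Problem}. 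Lemma \ref{exist_unique_simple} gives $S:\overline{H}_{P,0}^{-1}\to \overline{H}_{P,0}^{1}$. Theorem \ref{integer_regularity} gives, for $j=m,m+1$,
\[
S:\ X_j:=\overline{H}_{P,0}^{-1}\cap \overline{H}_{P,0}^{j}\longrightarrow Y_j:=\overline{H}_{P,0}^{1}\cap \overline{H}_{P,0}^{j+2},
\]
with norm at most $C_j$. On $X_j$ and $Y_j$ I use the sum norms, as in Lemma \ref{lemma_cap_embedding}.

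The key step is to identify the interpolation spaces: $[X_m,X_{m+1}]_\theta=\overline{H}_{P,0}^{-1}\cap\overline{H}_{P,0}^{s}$, and similarly $[Y_m,Y_{m+1}]_\theta=\overline{H}_{P,0}^{1}\cap\overline{H}_{P,0}^{s+2}$, with equivalent norms. Each $X_j$ is the weighted $L^2$ space on $\mathbb Z^n\setminus\{\boldsymbol 0\}$ with weight $w_j(\boldsymbol k)=|P\boldsymbol k|^{-2}+|P\boldsymbol k|^{2j}$, up to a factor $2$ in the squared norm. By the Stein--Weiss theorem, $[L^2(w_0),L^2(w_1)]_\theta=L^2(w_0^{1-\theta}w_1^{\theta})$ isometrically. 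It then remains to check the pointwise two-sided bound
\[
(|P\boldsymbol k|^{-2}+|P\boldsymbol k|^{2m})^{1-\theta}(|P\boldsymbol k|^{-2}+|P\boldsymbol k|^{2m+2})^{\theta}\eqsim |P\boldsymbol k|^{-2}+|P\boldsymbol k|^{2s}.
\]
Split into the cases $|P\boldsymbol k|\le1$ and $|P\boldsymbol k|\ge1$. In the first case each factor is comparable to $|P\boldsymbol k|^{-2}$. In the second, the factors are comparable to $|P\boldsymbol k|^{2m}$ and $|P\boldsymbol k|^{2m+2}$, whose weighted product is $|P\boldsymbol k|^{2s}$. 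All constants are at most $2$. The same computation works for the $Y_j$ with exponents shifted by $2$.

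The interpolation theorem for bounded linear operators then gives $S:\overline{H}_{P,0}^{-1}\cap \overline{H}_{P,0}^{s}\to \overline{H}_{P,0}^{1}\cap \overline{H}_{P,0}^{s+2}$ with norm at most $C\,C_m^{1-\theta}C_{m+1}^{\theta}$. Hence $U\in \overline{H}_{P,0}^{s+2}$, with the corresponding estimate. To avoid citing abstract interpolation, one can instead redo the argument directly: apply Theorem \ref{integer_regularity}'s method to Galerkin solutions with fractional multipliers. The abstract route is cleaner, though.

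The main obstacle is consistency: $S$ must be the same operator on both couples, which holds since both are restrictions of the Lax--Milgram solution map on $\overline{H}_{P,0}^{-1}$. The other concern is the regularity needed on $A$ for level $m+1$. If one insists on only $N>n+m+1$, I would fall back to a commutator argument, estimating a fractional multiplier $|P\nabla|^{\theta}$ applied to $A$ using the decay of $\widehat A_{\boldsymbol k}$. That is where the real difficulty would then lie.
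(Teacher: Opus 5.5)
Your proof is correct and uses the same tool as the paper: Stein--Weiss interpolation between the integer levels $m=\lfloor s\rfloor$ and $m+1$ of Theorem \ref{integer_regularity}. Both arguments therefore need that theorem at level $m+1$, i.e.\ $N>n+m+2$; you state this, while the paper leaves it implicit. However, you interpolate different couples, and that difference is substantive.

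The paper interpolates the bare couples $(\overline{H}_{P,0}^{m},\overline{H}_{P,0}^{m+1})$ and $(\overline{H}_{P,0}^{m+2},\overline{H}_{P,0}^{m+3})$. It asserts that the solution operator is bounded from $\overline{H}_{P,0}^{m}$ to $\overline{H}_{P,0}^{m+2}$, but Theorem \ref{integer_regularity} does not give this. Its bound carries the term $\|F\|_{\overline{H}_{P,0}^{-1}}$. Moreover, since $|P\boldsymbol{k}|$ accumulates at $0$, there are $F\in\overline{H}_{P,0}^{m}\setminus\overline{H}_{P,0}^{-1}$, and for such $F$ the Lax--Milgram solution is not even defined.

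Your couples $X_j=\overline{H}_{P,0}^{-1}\cap\overline{H}_{P,0}^{j}$ and $Y_j=\overline{H}_{P,0}^{1}\cap\overline{H}_{P,0}^{j+2}$ are exactly what Theorem \ref{integer_regularity} and Lemma \ref{exist_unique_simple} together support. Your two-case comparison of the weights $|P\boldsymbol{k}|^{-2}+|P\boldsymbol{k}|^{2j}$ (resp.\ $|P\boldsymbol{k}|^{2}+|P\boldsymbol{k}|^{2j+4}$), for $|P\boldsymbol{k}|\le 1$ and $|P\boldsymbol{k}|\ge 1$, correctly identifies the interpolation spaces. Your consistency remark, that both restrictions come from the single Lax--Milgram map on $\overline{H}_{P,0}^{-1}$, is the right justification for applying the interpolation theorem.

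So your version is the rigorous form of the paper's sketch. It also yields the quantitative bound $\|U\|_{\overline{H}_{P,0}^{1}}+\|U\|_{\overline{H}_{P,0}^{s+2}}\lesssim\|F\|_{\overline{H}_{P,0}^{-1}}+\|F\|_{\overline{H}_{P,0}^{s}}$, not just membership. Whether one uses the complex method or the paper's real method $(\cdot,\cdot)_{\theta,2}$ is immaterial for weighted $\ell^2$ spaces.
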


\begin{proof}
The corollary follows from the previously established integer-order regularity in 
Theorem \ref{integer_regularity} and the Stein-Weiss interpolation 
theorem \cite[Thoerem 5.4.1]{Interpolation}.

Indeed, under the Fourier characterization,
\begin{eqnarray*}
\overline{H}_{P,0}^r(\mathbb T^n) \cong \ell^2(\mathbb Z^n\setminus\{\boldsymbol 0\},|P\boldsymbol{k}|^{2r}),\ \ \ \forall r\in\mathbb R.
\end{eqnarray*}
And for any $s\not\in \mathbb N_0$, denote $m=\lfloor s \rfloor\in\mathbb N_0$ and $\theta=s-m\in(0,1)$, we have 
\begin{eqnarray*}
s=(1-\theta)m+\theta (m+1),
\end{eqnarray*}
and
\begin{eqnarray*}
|P\boldsymbol{k}|^{2s}=(|P\boldsymbol{k}|^{2m})^{1-\theta}(|P\boldsymbol{k}|^{2(m+1)})^{\theta}.
\end{eqnarray*}
Hence,
\begin{eqnarray*}
(\overline{H}_{P,0}^{m}, \overline{H}_{P,0}^{m+1})_{\theta,2}=\overline{H}_{P,0}^{s},\ \ \ 
(\overline{H}_{P,0}^{m+2}, \overline{H}_{P,0}^{m+3})_{\theta,2}=\overline{H}_{P,0}^{s+2}.
\end{eqnarray*}
Let $T$ denote the solution operator, namely $U=TF$.
By Theorem \ref{integer_regularity}, $T$ is bounded from $\overline{H}_{P,0}^{m}(\mathbb T^n)$ to $\overline{H}_{P,0}^{m+2}(\mathbb T^n)$ 
and from $\overline{H}_{P,0}^{m+1}(\mathbb T^n)$ to $\overline{H}_{P,0}^{m+3}(\mathbb T^n)$.
Therefore, the interpolation theorem yields that $T$ is also bounded from $\overline{H}_{P,0}^{s}(\mathbb T^n)$ to $\overline{H}_{P,0}^{s+2}(\mathbb T^n)$. 
Consequently, for every
\begin{eqnarray*}
F\in \overline{H}_{P,0}^{-1}(\mathbb T^n)\cap \overline{H}_{P,0}^{s}(\mathbb T^n),
\end{eqnarray*}
the corresponding variational solution satisfies
\begin{eqnarray*}
U=TF\in \overline{H}_{P,0}^{s+2}(\mathbb T^n).
\end{eqnarray*}
This completes the proof.
\end{proof}
Theorem \ref{integer_regularity} and Corollary \ref{real_regularity} 
yield regularity improvement results for the variational problem \eqref{Projection_Problem}.
However, if the solution possesses only $\overline{H}_{P,0}^s$-regularity, this is still not sufficient for numerical approximation.
The following theorem makes this precise by showing that the convergence may be arbitrarily slow.
\begin{theorem}\label{bad_lower_bound}
For any monotonically decreasing sequence $\{\eta_m\}_{m=1}^{\infty}$ that tends to $0$, 
there exists $U\in \overline{H}_{P,0}^s(\mathbb T^n)$ such that for $K\geq 1$, the following inequality holds
\begin{eqnarray*}
\|U-\Pi_{K} U\|_{\overline{H}_{P}^t}\geq \eta_{K},
\end{eqnarray*}
where $s,t\in\mathbb R$.
\end{theorem}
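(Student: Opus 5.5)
The plan is to build $U$ as a lacunary Fourier series supported on frequencies $\boldsymbol{k}_j$ with $|P\boldsymbol{k}_j|$ small. By the proof of Lemma \ref{not_embedding_L2}, $\boldsymbol 0$ is an accumulation point of $\Gamma$. Hence there are infinitely many $\boldsymbol{k}\in\mathbb Z^n\setminus\{\boldsymbol 0\}$ with $|P\boldsymbol{k}|$ as small as we like. For such $\boldsymbol k$, the $s$-weight $|P\boldsymbol{k}|^{2s}$ and the $t$-weight $|P\boldsymbol{k}|^{2t}$ can be decoupled. In particular, when $t<s$ the $t$-weight is much larger than the $s$-weight for tiny $|P\boldsymbol{k}|$. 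The strategy is therefore to let the $\overline H^s_{P,0}$-mass be summable while each tail carries a prescribed $\overline H^t_{P}$-mass.

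\textbf{Step 1 (frequency selection).} I will choose inductively a sequence $\boldsymbol{k}_1,\boldsymbol{k}_2,\dots$ with $|\boldsymbol{k}_j|$ strictly increasing. I will also require that $|P\boldsymbol{k}_j|\le \delta_j$ for a prescribed sequence $\delta_j\downarrow 0$. This is possible because the set $\{\boldsymbol k\ne 0: |P\boldsymbol k|<\delta\}$ is infinite for every $\delta>0$, so it contains elements of arbitrarily large norm. Set $K_j:=|\boldsymbol{k}_j|$, choosing $K_1\ge 1$ and $K_{j+1}>K_j$.

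\textbf{Step 2 (coefficients).} I will put $U=\sum_j c_j e^{{\rm i}\boldsymbol{k}_j^\top\boldsymbol y}$, which has zero mean. For $K_{j-1}\le K<K_j$ we have
\[
\|U-\Pi_KU\|^2_{\overline H^t_P}\ge |P\boldsymbol{k}_j|^{2t}|c_j|^2 ,
\]
and since $\eta$ is decreasing it suffices to require $|P\boldsymbol{k}_j|^{t}|c_j|\ge \eta_{K_{j-1}}$, where I set $K_0:=1$. I therefore take $|c_j|:=\eta_{K_{j-1}}|P\boldsymbol{k}_j|^{-t}$. Then
\[
\|U\|^2_{\overline H^s_{P,0}}=\sum_j \eta_{K_{j-1}}^2|P\boldsymbol{k}_j|^{2(s-t)} .
\]

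\textbf{Step 3 (summability).} This is the main obstacle. If $s>t$, I choose $\delta_j$ so small that $\delta_j^{2(s-t)}\le 2^{-j}$, which makes the series converge. If $s\le t$ the factor $|P\boldsymbol{k}_j|^{2(s-t)}$ grows, and small projected frequencies no longer help. In that case I would instead pick $\boldsymbol k_j$ with $|P\boldsymbol k_j|$ bounded above and below, say in $[1,2]$. Such $\boldsymbol k$ with arbitrarily large $|\boldsymbol k|$ exist: add an element of $\Gamma$ with norm in $[1,2]$ to elements of $\Gamma$ that are nearly zero and have large $|\boldsymbol k|$. With these frequencies the weights are comparable, and I take $c_j$ so that $\sum_j |c_j|^2$ converges but every tail $\sum_{i\ge j}|c_i|^2$ is still at least a constant times $\eta_{K_{j-1}}^2$. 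The choice $|c_j|^2=\eta_{K_{j-1}}^2-\eta_{K_j}^2$ works: it telescopes, so the tail from index $j$ equals exactly $\eta_{K_{j-1}}^2$. I will use this telescoping choice in both cases for uniformity, rescaling by the bounded weight ratios.

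The delicate point is the indexing between $K$ and the blocks $[K_{j-1},K_j)$ together with monotonicity of $\eta$. I also need to account for the fact that the statement's norm is $\overline H^t_P$ rather than $\overline H^t_{P,0}$, but only nonzero modes appear, so the two coincide here. Finally, the constants in the case $s\le t$ can be absorbed by replacing $\eta$ with $C\eta$ from the start.
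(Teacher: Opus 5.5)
Your proposal is correct and is essentially the paper's proof. For $s>t$ the paper does the same thing: lacunary frequencies with $|P\boldsymbol k_j|^{2(s-t)}<2^{-j}$, $t$-weighted telescoping coefficients, and blocks between consecutive $|\boldsymbol k_j|$. For $s\le t$ the paper skips your perturbation argument and simply takes $\boldsymbol k_m=m\boldsymbol\ell$ with $|\boldsymbol\ell|=1$, because only the lower bound $|P\boldsymbol k_m|\ge|P\boldsymbol\ell|>0$ is needed, not a two-sided band.

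Your block indexing is more careful than the paper's. You note that $\boldsymbol k_j$ is not removed by $\Pi_K$ because $|\boldsymbol k_j|>K$. The paper instead sums the tail over $|\boldsymbol k|\ge K$, although $\Pi_K$ keeps $|\boldsymbol k|=K$, which in its $s\le t$ case yields $\eta_{K+1}$ rather than $\eta_K$.

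Two small fixes. When $s>t$, apply the telescoping to $|P\boldsymbol k_j|^{t}|c_j|$ rather than to $|c_j|$, since the weight ratio is unbounded there. Also evaluate $\eta$ at $\lceil K_{j-1}\rceil$, because $|\boldsymbol k_j|$ need not be an integer.
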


\begin{proof}
First, we prove that the theorem holds for $s\leq t$.
Define a sequence $\{a_m\}_{m=1}^{\infty}$ as follows
\begin{eqnarray*}
a_m=\eta_m^2-\eta_{m+1}^2\geq 0,\ \ \ m\in\mathbb Z^+.
\end{eqnarray*}
Then 
\begin{eqnarray*}
\sum_{m=1}^\infty a_m=\eta_1^2,\ \ \ \sum_{m\geq K} a_m=\eta_{K}^2.
\end{eqnarray*}
Take $\boldsymbol{\ell}\in \mathbb Z^n\setminus\{\boldsymbol 0\}$ and $|\boldsymbol{\ell}|=1$. Since $P\in\mathbb P^{d\times n}$, 
we know $|P\boldsymbol{\ell}|\neq 0$.
Define the function $U$, which is only dependent on the Fourier 
coefficients at frequency $\boldsymbol{k}_m=m\boldsymbol{\ell},m\in\mathbb Z^+$ 
and its nonzero Fourier coefficients are given by
\begin{eqnarray*}
\widehat{U}_{\boldsymbol{k}_m}=\frac{\sqrt{a_m}}{|P\boldsymbol{k}_m|^t},\ \ \ m\in\mathbb Z^+.
\end{eqnarray*}
Then the following inequality holds 
\begin{eqnarray*}
\|U\|_{\overline{H}_{P}^s}^2&=&\sum_{\boldsymbol{k}\in \mathbb Z^n\setminus\{\boldsymbol 0\}}|P\boldsymbol{k}|^{2s}|\widehat{U}_{\boldsymbol{k}}|^2
=\sum_{m=1}^{\infty}|P\boldsymbol{k}_m|^{2s}\frac{a_m}{|P\boldsymbol{k}_m|^{2t}}\\
&=&|P\boldsymbol{\ell}|^{2(s-t)}\sum_{m=1}^{\infty}a_m m^{2(s-t)}\leq |P\boldsymbol{\ell}|^{2(s-t)}\sum_{m=1}^{\infty}a_m
=|P\boldsymbol{\ell}|^{2(s-t)}\eta_1^2<\infty,
\end{eqnarray*}
which means 
$U\in \overline{H}_{P,0}^s(\mathbb T^n)$.
And for $K\geq 1$, we have
\begin{eqnarray*}
\|U-\Pi_{K} U\|_{\overline{H}_{P}^t}^2&=&\sum_{|\boldsymbol{k}|\geq K}|P\boldsymbol{k}|^{2t}|\widehat{U}_{\boldsymbol{k}}|^2\\
&=&\sum_{m\geq K}|P\boldsymbol{k}_m|^{2t}\frac{a_m}{|P\boldsymbol{k}_m|^{2t}}=\sum_{m\geq K} a_m=\eta_K^2.
\end{eqnarray*}

Taking square roots on both sides of the above equality leads to 
\begin{eqnarray*}
\|U-\Pi_{K} U\|_{\overline{H}_{P}^t}=\eta_K,
\end{eqnarray*}
which is stronger than the conclusion of the theorem for the case $s\leq t$.

Next, we prove the case $s>t$ in the theorem.
For $j\in\mathbb Z,j\geq 1$, since the conclusion of Diophantine approximation, 
there exists a sequence $\{\boldsymbol{k}_j\}$ with $|\boldsymbol{k}_j|$ increasing monotonically in $j$ such that
\begin{eqnarray*}
|P\boldsymbol{k}_j|^{2(s-t)}<2^{-j},\ \ \ j\in\mathbb Z,\ \ j\geq 2.
\end{eqnarray*}
Let $N_1=0$ and $N_j=|\boldsymbol{k}_j|$ with $j\in\mathbb Z,j\geq 2$, and define a sequence $\{b_j\}_{j=2}^{\infty}$ as follows
\begin{eqnarray*}
b_j = \eta_{N_{j-1}}^2-\eta_{N_{j}}^2\geq 0,\ \ \ j\in\mathbb Z,\ \ j\geq 2.
\end{eqnarray*}
Then it is obvious that 
\begin{eqnarray*}
\sum_{j=2}^{\infty}b_j=\eta_{N_1}^2,\ \ \ \sum_{j\geq K}^{\infty}b_j=\eta_{N_{K-1}}^2.
\end{eqnarray*}
Define $U$ to contain only the frequencies $\{\boldsymbol{k}_j\}$, with its nonzero Fourier coefficients given by
\begin{eqnarray*}
\widehat{U}_{\boldsymbol{k}_j}=\frac{\sqrt{b_j}}{|P\boldsymbol{k}_j|^t},\ \ \ j\in\mathbb Z, \ \ j\geq 2.
\end{eqnarray*}
The property that $u\in \overline{H}_{P,0}^s(\mathbb T^n)$ 
can be deduced by the following argument
\begin{eqnarray*}
\|U\|_{\overline{H}_{P}^s}^2&=&\sum_{\boldsymbol{k}\in \mathbb Z^n\setminus\{\boldsymbol 0\}}|P\boldsymbol{k}|^{2s}|\widehat{U}_{\boldsymbol{k}}|^2
=\sum_{j=2}^{\infty}|P\boldsymbol{k}_j|^{2s}\frac{b_j}{|P\boldsymbol{k}_j|^{2t}}\\
&=&\sum_{j=2}^{\infty}|P\boldsymbol{k}_j|^{2(s-t)}b_j\leq \sum_{j=2}^{\infty} 2^{-j} b_j\leq \sum_{j=2}^{\infty} b_j =\eta_{N_1}^2<\infty.
\end{eqnarray*}
For any $K\geq 1$, there exists a unique $\ell\in \mathbb Z,\ell\geq 2$, 
such that $N_{\ell-1}< K\leq N_{\ell}$.
Then we have the following lower bound 
\begin{eqnarray*}
\|U-\Pi_{K} U\|_{\overline{H}_{P}^t}^2&=&\sum_{|\boldsymbol{k}|\geq K} 
|P\boldsymbol{k}|^{2t}|\widehat{U}_{\boldsymbol{k}}|^2=\sum_{j\geq \ell} |P\boldsymbol{k}_j|^{2t}\frac{b_j}{|P\boldsymbol{k}_j|^{2t}}\\
&=& \sum_{j\geq \ell} b_j=\eta_{N_{\ell-1}}^2\geq \eta_{K}^2.
\end{eqnarray*}
By taking square roots on both sides of the above inequality, 
we obtain the desired result for the case $s>t$. 
Thus, the proof is complete. 
\end{proof}
It is easy to find that, in Theorem \ref{bad_lower_bound}, 
the sequence $\{\eta_{m}\}_{m=1}^{\infty}$ may decay arbitrarily 
slowly, even more slowly than any polynomial rate.
Taking $t=1$, this means that, under the $\overline{H}_{P,0}^s$-regularity 
the Fourier spectral method does not in general admit any algebraic rate of 
convergence in the energy norm.
Taking $t=0$, no algebraic rate of convergence can be obtained in the $L^2$-norm.
This motivates us to further improve the regularity of $U$ so that $U$ belongs to the 
standard Sobolev space. Theorem \ref{theorem_exists_unique_Sobolev} will provide 
a fairly natural sufficient condition guarantee this regularity improvement.

The proof of Theorem \ref{theorem_exists_unique_Sobolev} requires the following inequality.
For clarity, we first present the proof of the inequality.
\begin{lemma}\label{lemma_ineq}
For any $r\in\mathbb R$, the following inequality holds for any $\boldsymbol{k},\boldsymbol{\ell}\in \mathbb Z^n$
\begin{eqnarray*}
(1+|\boldsymbol{k}|^2)^{\frac{r}{2}}\leq C_r(1+|\boldsymbol{k}-\boldsymbol{\ell}|)^{|r|}(1+|\boldsymbol{\ell}|^2)^{\frac{r}{2}},
\end{eqnarray*}
where $C_r=2^{\frac{|r|}{2}}$ which is independent of $\boldsymbol{k}$ and $\boldsymbol{\ell}$.
\end{lemma}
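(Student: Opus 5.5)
This is Peetre's inequality, and I will prove it by reducing to the case $r\ge 0$ and then using the triangle inequality.

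\textbf{Case $r\ge 0$.} The key pointwise bound is
\begin{eqnarray*}
1+|\boldsymbol{k}|^2\leq 2(1+|\boldsymbol{k}-\boldsymbol{\ell}|)^2(1+|\boldsymbol{\ell}|^2).
\end{eqnarray*}
To obtain it, I write $|\boldsymbol{k}|\le |\boldsymbol{k}-\boldsymbol{\ell}|+|\boldsymbol{\ell}|$. Then $(a+b)^2\le 2a^2+2b^2$ gives
\begin{eqnarray*}
1+|\boldsymbol{k}|^2\le 1+2|\boldsymbol{k}-\boldsymbol{\ell}|^2+2|\boldsymbol{\ell}|^2.
\end{eqnarray*}
Next I compare with the expansion of the product:
\begin{eqnarray*}
2(1+|\boldsymbol{k}-\boldsymbol{\ell}|)^2(1+|\boldsymbol{\ell}|^2)\geq 2(1+|\boldsymbol{k}-\boldsymbol{\ell}|^2)(1+|\boldsymbol{\ell}|^2)
=2+2|\boldsymbol{k}-\boldsymbol{\ell}|^2+2|\boldsymbol{\ell}|^2+2|\boldsymbol{k}-\boldsymbol{\ell}|^2|\boldsymbol{\ell}|^2 .
\end{eqnarray*}
The right-hand side dominates $1+2|\boldsymbol{k}-\boldsymbol{\ell}|^2+2|\boldsymbol{\ell}|^2$, since every extra term is nonnegative, which proves the bound. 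Raising it to the power $r/2\ge 0$ preserves the inequality and yields the claim with $C_r=2^{r/2}$ and exponent $r=|r|$ on $(1+|\boldsymbol{k}-\boldsymbol{\ell}|)$.

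\textbf{Case $r<0$.} Set $\rho=-r>0$ and apply the key bound with the roles of $\boldsymbol{k}$ and $\boldsymbol{\ell}$ swapped, using $|\boldsymbol{\ell}-\boldsymbol{k}|=|\boldsymbol{k}-\boldsymbol{\ell}|$:
\begin{eqnarray*}
1+|\boldsymbol{\ell}|^2\le 2(1+|\boldsymbol{k}-\boldsymbol{\ell}|)^2(1+|\boldsymbol{k}|^2).
\end{eqnarray*}
Raising this to the power $\rho/2$ and dividing both sides by the positive quantity $(1+|\boldsymbol{k}|^2)^{\rho/2}(1+|\boldsymbol{\ell}|^2)^{\rho/2}$ gives
\begin{eqnarray*}
(1+|\boldsymbol{k}|^2)^{-\rho/2}\le 2^{\rho/2}(1+|\boldsymbol{k}-\boldsymbol{\ell}|)^{\rho}(1+|\boldsymbol{\ell}|^2)^{-\rho/2}.
\end{eqnarray*}
This is exactly the statement with $|r|=\rho$ and $C_r=2^{|r|/2}$.

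The case $r=0$ is trivial. The only step requiring any care is the elementary squared triangle inequality that produces the constant $2$; the symmetry trick then handles negative $r$ with no further work.
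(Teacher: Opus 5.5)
Your proof is correct and follows essentially the same route as the paper. Both arguments establish $(1+|\boldsymbol{k}|^2)^{1/2}\le\sqrt{2}\,(1+|\boldsymbol{k}-\boldsymbol{\ell}|)(1+|\boldsymbol{\ell}|^2)^{1/2}$, raise it to a nonnegative power, and handle $r<0$ by swapping $\boldsymbol{k}$ and $\boldsymbol{\ell}$ and dividing. The only difference is minor: you get the key bound from $(a+b)^2\le 2a^2+2b^2$ and a direct expansion, whereas the paper goes through $1+|\boldsymbol{k}|\le(1+|\boldsymbol{k}-\boldsymbol{\ell}|)(1+|\boldsymbol{\ell}|)$ together with $(1+|\boldsymbol{x}|^2)^{1/2}\le 1+|\boldsymbol{x}|\le\sqrt{2}\,(1+|\boldsymbol{x}|^2)^{1/2}$.
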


\begin{proof}
It is easy to have the following inequalities
\begin{eqnarray}\label{ineq_1+|x|}
(1+|\boldsymbol{x}|^2)^{\frac{1}{2}}\leq 1+|\boldsymbol{x}|\leq \sqrt{2}(1+|\boldsymbol{x}|^2)^{\frac{1}{2}},\ \ \ \forall \boldsymbol{x}\in \mathbb R^n.
\end{eqnarray}
First, we prove that the Lemma holds for $r\geq 0$.
It is easy to verify that the following inequality holds
\begin{eqnarray}\label{ineq_1+|k|}
1+|\boldsymbol{k}|\leq (1+|\boldsymbol{k}-\boldsymbol{\ell}|)(1+|\boldsymbol{\ell}|).
\end{eqnarray}
Combining \eqref{ineq_1+|x|} and (\ref{ineq_1+|k|}) leads to following inequalities 
\begin{eqnarray*}
(1+|\boldsymbol{k}|^2)^{\frac{1}{2}}\leq 1+|\boldsymbol{k}|\leq (1+|\boldsymbol{k}-\boldsymbol{\ell}|)
(1+|\boldsymbol{\ell}|)\leq \sqrt{2}(1+|\boldsymbol{k}-\boldsymbol{\ell}|)(1+|\boldsymbol{\ell}|^2)^{\frac{1}{2}}.
\end{eqnarray*}
Then, raising both sides of the inequality to the power $r$ can prove the desired result 
for the case of $r\geq 0$.

Next, we prove the case for $r<0$.
Denote $s=-r>0$. By exchanging $\boldsymbol{k}$ and $\boldsymbol{\ell}$ in \eqref{ineq_1+|k|}, we obtain
\begin{eqnarray*}
1+|\boldsymbol{\ell}|\leq (1+|\boldsymbol{k}-\boldsymbol{\ell}|)(1+|\boldsymbol{k}|).
\end{eqnarray*}
Combined with \eqref{ineq_1+|x|}, the following inequalities hold
\begin{eqnarray*}
(1+|\boldsymbol{\ell}|^2)^{\frac{1}{2}}\leq 1+|\boldsymbol{\ell}|\leq (1+|\boldsymbol{k}-\boldsymbol{\ell}|)(1+|\boldsymbol{k}|)
\leq \sqrt{2}(1+|\boldsymbol{k}-\boldsymbol{\ell}|)(1+|\boldsymbol{k}|^2)^{\frac{1}{2}}.
\end{eqnarray*}
Raising both sides of the inequality to the power $s$ and then multiplying by $(1+|\boldsymbol{\ell}|^2)^{-\frac{s}{2}}(1+|\boldsymbol{k}|^2)^{-\frac{s}{2}}$, 
we prove the desired result for $r<0$. Thus, the proof of this lemma is complete.
\end{proof}

\begin{theorem}\label{theorem_exists_unique_Sobolev}
Let $s\geq 0$.
Assume that $A$ satisfies \eqref{bound_A} and Assumption \ref{assume_A} with $N>n+s+\tau$,
and that the projection matrix $P$ satisfies the Diophantine condition \eqref{Diophantine_condition_P}. 
Then for any $F\in H_{0}^{2\tau+s}(\mathbb T^n)$, 
there exists a unique solution $U\in H_{0}^{s}(\mathbb T^n)$ for the variational problem \eqref{Projection_Problem}.
\end{theorem}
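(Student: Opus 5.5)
The plan is to reduce the statement to a weighted energy estimate for the Galerkin approximations $U_K$ of \eqref{Projection_Problem_Galerkin}, with the Fourier multiplier weight $(1+|\boldsymbol{k}|^2)^{\sigma}$ for $\sigma:=s+\tau$. The Diophantine condition is then used twice.

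\textbf{Step 1: existence and uniqueness in $\overline H^1_{P,0}$.} Lemma \ref{lemma_Diophantine_Embedding} with exponent $1$ gives $H_0^{\tau}(\mathbb T^n)\hookrightarrow \overline H_{P,0}^{-1}(\mathbb T^n)$. Since $F\in H_0^{2\tau+s}\subset H_0^{\tau}$, we get $F\in\overline H_{P,0}^{-1}$. Lemma \ref{exist_unique_simple} then yields a unique $U\in\overline H^1_{P,0}(\mathbb T^n)$. Uniqueness in $H^s_0$ follows once we show that this $U$ lies in $H^s_0$.

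\textbf{Step 2: reduction of the $H^s$ norm to a projected norm.} The Diophantine condition \eqref{Diophantine_condition_P} gives $1\le c^{-2}|P\boldsymbol{k}|^2|\boldsymbol{k}|^{2\tau}$ for $\boldsymbol{k}\neq\boldsymbol 0$. Writing $\Lambda^{\sigma}$ for the multiplier $\widehat{U}_{\boldsymbol{k}}\mapsto(1+|\boldsymbol{k}|^2)^{\sigma/2}\widehat U_{\boldsymbol{k}}$, this gives
\[
\|U\|_{H^s}^2\le c^{-2}\sum_{\boldsymbol{k}\neq\boldsymbol 0}|P\boldsymbol{k}|^2(1+|\boldsymbol{k}|^2)^{s+\tau}|\widehat U_{\boldsymbol{k}}|^2=c^{-2}\|\Lambda^{\sigma}U\|^2_{\overline H^1_{P,0}} .
\]
So it suffices to bound $\|\Lambda^\sigma U_K\|_{\overline H^1_{P,0}}$ uniformly in $K$. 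Passing to the limit then works exactly as in Step 2 of Theorem \ref{integer_regularity}: take a weakly convergent subsequence, identify its limit coefficientwise with $U$ (using $U_K=\Pi_KU$), and use weak lower semicontinuity.

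\textbf{Step 3: the weighted energy estimate.} The space $X_K$ is invariant under $\Lambda^{2\sigma}$, so $V_K=\Lambda^{2\sigma}U_K$ is an admissible test function. Expanding $a(\cdot,\cdot)$ in Fourier modes,
\[
a(U_K,\Lambda^{2\sigma}U_K)=\sum_{\boldsymbol{k},\boldsymbol{\ell}}\widehat A_{\boldsymbol{k}-\boldsymbol{\ell}}\,(P\boldsymbol{\ell})\cdot(P\boldsymbol{k})\,\widehat{U}_{\boldsymbol{\ell}}\overline{\widehat U_{\boldsymbol{k}}}(1+|\boldsymbol{k}|^2)^{\sigma}.
\]
We split the weight as $(1+|\boldsymbol{k}|^2)^{\sigma/2}(1+|\boldsymbol{\ell}|^2)^{\sigma/2}$ plus a commutator remainder. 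This gives
\[
a(U_K,\Lambda^{2\sigma}U_K)=a(\Lambda^\sigma U_K,\Lambda^\sigma U_K)+R_K,
\]
and the main term is at least $\alpha_0\|\Lambda^\sigma U_K\|^2_{\overline H^1_{P,0}}$ by \eqref{Ellipticity_a_Periodic}. The right-hand side satisfies
\[
|\langle F,\Lambda^{2\sigma}U_K\rangle|\le\|\Lambda^\sigma F\|_{\overline H^{-1}_{P,0}}\|\Lambda^\sigma U_K\|_{\overline H^1_{P,0}}.
\]
By the computation in Lemma \ref{lemma_Diophantine_Embedding}, $\|\Lambda^\sigma F\|_{\overline H^{-1}_{P,0}}\le c^{-1}\|F\|_{H^{\sigma+\tau}}=c^{-1}\|F\|_{H^{s+2\tau}}$. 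This is exactly where the hypothesis $F\in H_0^{2\tau+s}$ enters.

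\textbf{Main obstacle: the commutator $R_K$.} By the mean value theorem together with Lemma \ref{lemma_ineq},
\[
\bigl|(1+|\boldsymbol{k}|^2)^{\sigma/2}-(1+|\boldsymbol{\ell}|^2)^{\sigma/2}\bigr|\lesssim|\boldsymbol{k}-\boldsymbol{\ell}|(1+|\boldsymbol{k}-\boldsymbol{\ell}|)^{|\sigma-1|}(1+|\boldsymbol{\ell}|^2)^{(\sigma-1)/2}.
\]
So $R_K$ is a bilinear form in $P\nabla\Lambda^{\sigma-1}U_K$ and $P\nabla\Lambda^{\sigma}U_K$, with kernel $|\widehat A_{\boldsymbol{k}-\boldsymbol{\ell}}|(1+|\boldsymbol{k}-\boldsymbol{\ell}|)^{\sigma}$. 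By Assumption \ref{assume_A}, a Schur/Young convolution bound makes this kernel bounded on $\ell^2$ provided $\sum_{\boldsymbol{j}}(1+|\boldsymbol{j}|)^{\sigma-N}<\infty$, i.e.\ $N>n+s+\tau$, which is the stated hypothesis. Hence
\[
|R_K|\lesssim\|\Lambda^{\sigma-1}U_K\|_{\overline H^1_{P,0}}\|\Lambda^\sigma U_K\|_{\overline H^1_{P,0}}.
\]
The difficulty is that the lower-order quantity $\|\Lambda^{\sigma-1}U_K\|_{\overline H^1_{P,0}}$ cannot be absorbed via a Poincaré inequality, which is unavailable by Remark \ref{Remark_1}. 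I would control it by induction on $\sigma$ in steps of at most one, starting from the base case $\sigma\le0$ (or $\sigma\in[0,1]$). At each step, the lower-order term is bounded by the previous level's estimate, using the interpolation bound \eqref{eq_corollary_cap_embedding} between $\|U_K\|_{\overline H^1_{P,0}}$ and the already-controlled level, and Young's inequality absorbs the cross term into $\alpha_0\|\Lambda^\sigma U_K\|^2_{\overline H^1_{P,0}}$. If the constant from the commutator is too large to close this way, the fallback is to prove the fractional case $\sigma\in(0,1)$ directly. In that range the weight difference is bounded by $(1+|\boldsymbol{k}-\boldsymbol{\ell}|)^{\sigma}$ times a weight of order $\sigma-1\le0$. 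That weight is dominated by $|P\boldsymbol{\ell}|$-weights only through the $\overline H^1_{P,0}$ bound of Lemma \ref{exist_unique_simple}. One then iterates in unit steps.
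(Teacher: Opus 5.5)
Your route is correct in outline but genuinely different from the paper's. The paper rescales to $\widehat W_{\boldsymbol k}=|P\boldsymbol k|\widehat U_{\boldsymbol k}$ and $\widehat G_{\boldsymbol k}=|P\boldsymbol k|^{-1}\widehat F_{\boldsymbol k}$, and writes the problem as $\mathcal C w=g$ with $\mathcal C_{\boldsymbol k\boldsymbol\ell}=\widehat A_{\boldsymbol k-\boldsymbol\ell}\,(P\boldsymbol k)\cdot(P\boldsymbol\ell)/(|P\boldsymbol k||P\boldsymbol\ell|)$. This matrix is coercive on $\ell^2$ with off-diagonal decay $(1+|\boldsymbol k-\boldsymbol\ell|)^{-N}$. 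The paper then invokes Jaffard's theorem to transfer that decay to $\mathcal C^{-1}$, and Schur's test with Lemma \ref{lemma_ineq} makes $\mathcal C^{-1}$ bounded on $h^{s+\tau}$ when $N>n+s+\tau$.

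Your quantities are the same ones: $\|\Lambda^\sigma U\|_{\overline H^1_{P,0}}=\|W\|_{H^{s+\tau}}$ and $\|\Lambda^\sigma F\|_{\overline H^{-1}_{P,0}}=\|G\|_{H^{s+\tau}}$. The Diophantine condition also enters at the same two places. The difference is that you prove boundedness of $\mathcal C^{-1}$ on $h^\sigma$ directly: coercivity of $\mathcal C$, a Young-inequality bound on the commutator $[\Lambda^\sigma,\mathcal C]$, and an upward induction, in place of inverse-closedness of the Jaffard class. Your version is elementary, self-contained, gives explicit constants, and mirrors Theorem \ref{integer_regularity}. The paper's is shorter and gives entrywise decay of $\mathcal C^{-1}$ and boundedness on every $h^r$ with $|r|<N-n$ at once, but rests on a deep theorem with non-explicit constants.

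Three points need repair.

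(i) The fractional level is not a fallback; it is necessary. With your mean-value bound, a level $\sigma_0\in(0,1)$ produces the kernel $(1+|\boldsymbol j|)^{2-\sigma_0-N}$, which needs $N>n+2-\sigma_0$ and can exceed $n+s+\tau$. Moreover, the bound you sketch there ($(1+|\boldsymbol k-\boldsymbol\ell|)^{\sigma}$ times a weight of order $\sigma-1$) is false. With the weight at $\boldsymbol\ell$, take $\boldsymbol k=\boldsymbol 0$ and $|\boldsymbol\ell|\to\infty$: the left side grows like $|\boldsymbol\ell|^{\sigma}$, the right side like $|\boldsymbol\ell|^{2\sigma-1}$. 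Use instead $|(1+|\boldsymbol k|^2)^{\sigma/2}-(1+|\boldsymbol\ell|^2)^{\sigma/2}|\le|\boldsymbol k-\boldsymbol\ell|^{\sigma}$ for $0<\sigma\le 1$. This holds by subadditivity of $t\mapsto t^\sigma$ and the $1$-Lipschitz property of $\boldsymbol x\mapsto(1+|\boldsymbol x|^2)^{1/2}$. It gives $|R_K|\lesssim\|U_K\|_{\overline H^1_{P,0}}\|\Lambda^\sigma U_K\|_{\overline H^1_{P,0}}$ with kernel $|\widehat A_{\boldsymbol j}||\boldsymbol j|^{\sigma}$, summable exactly when $N>n+\sigma$. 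Use the mean-value bound only at levels $\ge 1$, where it needs $N>n+\sigma$ as you say.

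(ii) There is nothing to absorb. Set $X_\rho:=\|\Lambda^\rho U_K\|_{\overline H^1_{P,0}}$. Since $|R_K|\le C X_{\sigma-1}X_\sigma$, dividing by $X_\sigma$ gives $X_\sigma\le\alpha_0^{-1}(c^{-1}\|F\|_{H^{\sigma+\tau}}+CX_{\sigma-1})$ whatever the size of $C$. Unit steps also make interpolation unnecessary. In any case, \eqref{eq_corollary_cap_embedding} lives in the $|P\boldsymbol k|$-scale, not the $\Lambda$-scale.

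(iii) The identity $U_K=\Pi_K U$ holds only for constant $A$. For variable $A$, the Galerkin system couples modes inside and outside $\{|\boldsymbol k|\le K\}$. Identify the weak limit instead through $U_K\to U$ in $\overline H^1_{P,0}$. This follows from C\'ea's lemma (as in Theorem \ref{Theorem_Error_Estimate}, with $X_K$ in place of $\mathcal V_p$) together with $\|U-\Pi_K U\|_{\overline H^1_{P,0}}\to 0$.
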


\begin{proof}
By comparing the Fourier coefficients of equation \eqref{Projection_Problem}, 
the following equation holds
\begin{eqnarray*}
\widehat{F}_{\boldsymbol{k}}=\sum_{\boldsymbol{\ell}\in\mathbb Z^n\setminus\{\boldsymbol 0\}} 
\widehat{A}_{\boldsymbol{k}-\boldsymbol{\ell}} (P\boldsymbol{k})\cdot (P\boldsymbol{\ell}) \widehat{U}_{\boldsymbol{\ell}},
\ \ \ \boldsymbol{k}\in\mathbb Z^n\setminus\{\boldsymbol 0\}.
\end{eqnarray*}
Multiplying both sides of the above identity by $|P\boldsymbol{k}|^{-1}$ and doing rearrangement, 
we obtain the following equation 
\begin{eqnarray}\label{eq_fourier_ceff}
|P\boldsymbol{k}|^{-1}\widehat{F}_{\boldsymbol{k}}=\sum_{\boldsymbol{\ell}\in\mathbb Z^n\setminus\{\boldsymbol 0\}} 
\widehat{A}_{\boldsymbol{k}-\boldsymbol{\ell}} 
\frac{(P\boldsymbol{k})\cdot (P\boldsymbol{\ell})}{|P\boldsymbol{k}||P\boldsymbol{\ell}|} |P\boldsymbol{\ell}|\widehat{U}_{\boldsymbol{\ell}}.
\end{eqnarray}
Let us define two functions $W,G\in L_0^2(\mathbb T^n)$ 
with following Fourier coefficients 
\begin{eqnarray*}
\widehat{W}_{\boldsymbol{k}}=|P\boldsymbol{k}|\widehat{U}_{\boldsymbol{k}},
\ \ \ \widehat{G}_{\boldsymbol{k}}=|P\boldsymbol{k}|^{-1}\widehat{F}_{\boldsymbol{k}},
\ \ \ \boldsymbol{k}\in \mathbb Z^n\setminus\{\boldsymbol 0\}. 
\end{eqnarray*}
We use $w$ and $g$ to denote the images of $W$ and $G$ under the Fourier coefficient 
isomorphism from $L_0^2(\mathbb T^n)$ to $\ell^2(\mathbb Z^n\setminus\{\boldsymbol 0\})$, namely,
\begin{eqnarray*}
w=\{\widehat{W}_{\boldsymbol{k}}\},\ \ \ g=\{\widehat{G}_{\boldsymbol{k}}\}.
\end{eqnarray*}
We further define an infinite-dimensional matrix $\mathcal{C}$ whose entries are given by
\begin{eqnarray*}
\mathcal{C}_{\boldsymbol{k}\boldsymbol{\ell}}=\widehat{A}_{\boldsymbol{k}-\boldsymbol{\ell}}\frac{(P\boldsymbol{k})
\cdot (P\boldsymbol{\ell})}{|P\boldsymbol{k}||P\boldsymbol{\ell}|},
\ \ \ \boldsymbol{k},\boldsymbol{\ell}\in \mathbb Z^n\setminus\{\boldsymbol 0\}.
\end{eqnarray*}
Then the problem \eqref{eq_fourier_ceff} is equivalent to the following 
infinite-dimensional linear equation
\begin{eqnarray*}
\mathcal{C} w=g.
\end{eqnarray*}

First, we estimate the off-diagonal decay rate of $\mathcal{C}^{-1}$.
Since $a(U,U)=(w,\mathcal{C}w)_{\ell^2}$, the following inequalities hold
\begin{eqnarray*}
\alpha_0\|w\|_{\ell^2}\leq (w,\mathcal{C}w)_{\ell^2}\leq\alpha_1\|w\|_{\ell^2},
\end{eqnarray*}
and then $\mathcal{C}$ is invertible.
It follows from the assumptions of the theorem that 
\begin{eqnarray*}
|\mathcal{C}_{\boldsymbol{k}\boldsymbol{\ell}}|=\left|\widehat{A}_{\boldsymbol{k}-\boldsymbol{\ell}}
\frac{(P\boldsymbol{k})\cdot (P\boldsymbol{\ell})}{|P\boldsymbol{k}||P\boldsymbol{\ell}|}\right|
\leq |\widehat{A}_{\boldsymbol{k}-\boldsymbol{\ell}}|\leq C_{N}(1+|\boldsymbol{k}-\boldsymbol{\ell}|)^{-N}.
\end{eqnarray*}
Then, it follows from Jaffard's Theorem \cite{Jaffard,Jaffard_Eng} that the entries of $\mathcal{C}^{-1}$ 
have the same off-diagonal decay rate, that is
\begin{eqnarray*}
|\mathcal{C}^{-1}_{\boldsymbol{k}\boldsymbol{\ell}}|\leq C^{\prime}_{N}(1+|\boldsymbol{k}-\boldsymbol{\ell}|)^{-N}.
\end{eqnarray*}

Next, we prove that the following inequality holds for every $r\in \mathbb R$
\begin{eqnarray*}
\|\mathcal{C}^{-1}z\|_{h^r}\lesssim \|z\|_{h^r}.
\end{eqnarray*}
Here $z$ is an arbitrary sequence in $h^r(\mathbb Z^n\setminus\{\boldsymbol 0\})$, 
where $h^r(\mathbb Z^n\setminus\{\boldsymbol 0\})$ is the sequence space associated 
with the Fourier coefficients of functions in $H_0^r(\mathbb T^n)$.
Define an infinite-dimensional matrix $\mathcal K=(\mathcal{K}_{\boldsymbol{k}\boldsymbol{\ell}})$ by
\begin{eqnarray*}
\mathcal{K}_{\boldsymbol{k}\boldsymbol{\ell}}=(1+|\boldsymbol{k}-\boldsymbol{\ell}|)^r|\mathcal{C}_{\boldsymbol{k}\boldsymbol{\ell}}^{-1}|,
\end{eqnarray*}
then $\mathcal{K}$ satisfies the following off-diagonal decay estimate
\begin{eqnarray*}
|\mathcal{K}_{\boldsymbol{k}\boldsymbol{\ell}}|\leq C_N^{\prime} (1+|\boldsymbol{k}-\boldsymbol{\ell}|)^{-(N-r)}.
\end{eqnarray*}
For $N>n+r$, we have 
\begin{eqnarray*}
\sum_{\boldsymbol{\ell}\in\mathbb Z^n\setminus\{\boldsymbol 0\}} 
|\mathcal{K}_{\boldsymbol{k}\boldsymbol{\ell}}|\leq C_N^{\prime} 
\sum_{\boldsymbol{\ell}\in\mathbb Z^n\setminus\{\boldsymbol 0\}} (1+|\boldsymbol{k}-\boldsymbol{\ell}|)^{-(N-r)}
\leq C_N^{\prime} \sum_{\boldsymbol{m}\in\mathbb Z^n\setminus\{\boldsymbol 0\}} (1+|\boldsymbol{m}|)^{-(N-r)}<\infty.
\end{eqnarray*}
By the same argument, we also have $\sum_{\boldsymbol{k}\in\mathbb Z^n\setminus\{\boldsymbol 0\}} |\mathcal{K}_{\boldsymbol{k}\boldsymbol{\ell}}|<\infty$. 
Then, it follows from Schur's test; see \cite[Theorem 5.2]{Schur} and the original paper \cite{Schur1911}, that $\mathcal{K}$ is a bounded linear operator and 
\begin{eqnarray*}
\|\mathcal{K} z\|_{\ell^2}\leq C_{\mathcal{K}} \|z\|_{\ell^2}.
\end{eqnarray*}
By Lemma \ref{lemma_ineq}, together with a straightforward argument, for $\boldsymbol{k}\in\mathbb Z^n\setminus\{\boldsymbol 0\}$, we have
\begin{eqnarray*}
(1+|\boldsymbol{k}|^2)^{\frac{r}{2}}\left|\sum_{\boldsymbol{\ell}\in\mathbb Z^n\setminus\{\boldsymbol 0\}} \mathcal{C}_{\boldsymbol{k}\boldsymbol{\ell}}^{-1} z_{\boldsymbol{\ell}}\right|
&\leq& \sum_{\boldsymbol{\ell}\in\mathbb Z^n\setminus\{\boldsymbol 0\}} (1+|\boldsymbol{k}|^2)^{\frac{r}{2}} |\mathcal{C}_{\boldsymbol{k}\boldsymbol{\ell}}^{-1}| |z_{\boldsymbol{\ell}}|\\
&\leq& C_r \sum_{\boldsymbol{\ell}\in\mathbb Z^n\setminus\{\boldsymbol 0\}}  (1+|\boldsymbol{k}-\boldsymbol{\ell}|)^{|r|} |\mathcal{C}_{\boldsymbol{k}\boldsymbol{\ell}}^{-1}| (1+|\boldsymbol{\ell}|^2)^{\frac{r}{2}}|z_{\boldsymbol{\ell}}|\\
&=& C_r \sum_{\boldsymbol{\ell}\in\mathbb Z^n\setminus\{\boldsymbol 0\}} \mathcal{K}_{\boldsymbol{k}\boldsymbol{\ell}} v_{\boldsymbol{\ell}},
\end{eqnarray*}
where $v=\{(1+|\boldsymbol{\ell}|^2)^{\frac{r}{2}} |z_{\boldsymbol{\ell}}|\}$. Then it comes to the following estimate 
\begin{eqnarray*}
\|\mathcal{C}^{-1}z\|_{h^r}^2&=&\sum_{\boldsymbol{k}\in\mathbb Z^n\setminus\{\boldsymbol 0\}} \left(1+|\boldsymbol{k}|^2\right)^{r}
\left|\sum_{\boldsymbol{\ell}\in\mathbb Z^n\setminus\{\boldsymbol 0\}} \mathcal{C}_{\boldsymbol{k}\boldsymbol{\ell}}^{-1} z_{\boldsymbol{\ell}}\right|^2\\
&\leq&   C_r^2 \sum_{\boldsymbol{k}\in\mathbb Z^n\setminus\{\boldsymbol 0\}} \left(\sum_{\boldsymbol{\ell}\in\mathbb Z^n\setminus\{\boldsymbol 0\}} 
\mathcal{K}_{\boldsymbol{k}\boldsymbol{\ell}} v_{\boldsymbol{\ell}}\right)^2\\
&=& C_r^2 \|\mathcal{K}v\|_{\ell^2}^2\leq C_r^2 C_{\mathcal{K}}^2 \|v\|_{\ell^2}^2 
= C_r^2 C_{\mathcal{K}}^2 \|z\|_{h^r}^2,
\end{eqnarray*}
which is $\|\mathcal{C}^{-1}z\|_{h^r}\lesssim \|z\|_{h^r}$.

Finally, together with the conclusion of Lemma \ref{lemma_Diophantine_Embedding}, 
this yields the following inequality
\begin{eqnarray*}
\|U\|_{H^s}\lesssim \|W\|_{H^{s+\tau}} \lesssim \|G\|_{H^{s+\tau}} \lesssim \|F\|_{H^{s+2\tau}}.
\end{eqnarray*}
Thus, the proof of the theorem is complete.
\end{proof}

\begin{remark}
We can illustrate by giving examples that Theorem \ref{theorem_exists_unique_Sobolev} is sharp.
Assume $A\equiv A_0$ is a constant function, then through Fourier analysis, we can obtain
\begin{eqnarray*}
\widehat{U}_{\boldsymbol{k}}=\frac{\widehat{F}_{\boldsymbol{k}}}{A_0|P\boldsymbol{k}|^2},\ \ \ \boldsymbol{k}\in \mathbb Z^n\setminus\{\boldsymbol 0\}.
\end{eqnarray*} 
Then we have the following result 
\begin{eqnarray*}
F\in H^{2\tau}(\mathbb T^n) \Rightarrow F\in \overline{H}_{P}^{-2}(\mathbb T^n)	
\Leftrightarrow 
\sum_{\boldsymbol{k}\in \mathbb Z^n\setminus\{\boldsymbol 0\}} \frac{|\widehat{F}_{\boldsymbol{k}}^2|}{|P\boldsymbol{k}|^4}<\infty
\Leftrightarrow \sum_{\boldsymbol{k}\in \mathbb Z^n\setminus\{\boldsymbol 0\}} |\widehat{U}_{\boldsymbol{k}}|^2<\infty
\Leftrightarrow U\in L^2_{0}(\mathbb T^n).
\end{eqnarray*}
This corresponds to the case $s=0$ of Theorem \ref{theorem_exists_unique_Sobolev}.
\end{remark}

Theorem \ref{theorem_exists_unique_Sobolev} is particularly important for numerical approximation.
Indeed, once the solution $U$ is improved to the standard Sobolev space, 
the classical approximation theory for Fourier projections becomes applicable, 
which yields quantitative convergence rates.
In particular, the arbitrarily slow convergence phenomenon exhibited in Theorem \ref{bad_lower_bound} 
no longer occurs under the additional assumptions of Theorem \ref{theorem_exists_unique_Sobolev}.
The following corollary illustrates this conclusion.

\begin{corollary}\label{cor_projection_rate}
Let the assumptions of Theorem \ref{theorem_exists_unique_Sobolev} be satisfied, 
and assume that $F\in H_0^{2\tau+s+1}(\mathbb T^n)$.
Then the corresponding solution \(U\) to \eqref{Projection_Problem} belongs 
to \(H_0^{s+1}(\mathbb T^n)\), and the Fourier projection \(\Pi_K U\) 
satisfies the following estimates
\begin{eqnarray*}
\|U-\Pi_K U\|_{\overline{H}_{P,0}^1}&\leq& C K^{-s}\|U\|_{H_0^{s+1}},\\
\|U-\Pi_K U\|_{L_0^2}&\leq& C K^{-(s+1)}\|U\|_{H_0^{s+1}}.
\end{eqnarray*}
\end{corollary}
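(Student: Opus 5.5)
The plan has two parts: first get the regularity $U\in H_0^{s+1}(\mathbb T^n)$ from Theorem \ref{theorem_exists_unique_Sobolev}, then compare Fourier tails.

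\emph{Regularity.} Apply Theorem \ref{theorem_exists_unique_Sobolev} with $s$ replaced by $s+1$. Its hypothesis $F\in H_0^{2\tau+(s+1)}(\mathbb T^n)$ is exactly our assumption on $F$. The decay exponent needs $N>n+s+1+\tau$, which is stronger than the $N>n+s+\tau$ we are given. So I would read ``the assumptions of Theorem \ref{theorem_exists_unique_Sobolev}'' as being imposed at the level $s+1$, i.e.\ assume $N>n+s+1+\tau$, and state this explicitly. One could try to avoid the extra unit by checking the Schur-test step in that proof more carefully, but I would simply make the assumption. The theorem then gives $U\in H_0^{s+1}(\mathbb T^n)$ with $\|U\|_{H^{s+1}}\lesssim\|F\|_{H^{2\tau+s+1}}$.

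\emph{Energy-norm estimate.} Since $\Pi_K$ is a Fourier truncation, $U-\Pi_K U$ has coefficients $\widehat U_{\boldsymbol k}$ for $|\boldsymbol k|>K$ only. Using \eqref{Pk_leq_k}, $|P\boldsymbol k|^2\le C_\lambda^2|\boldsymbol k|^2$, we get
\begin{eqnarray*}
\|U-\Pi_KU\|_{\overline H^1_{P,0}}^2=\sum_{|\boldsymbol k|>K}|P\boldsymbol k|^2|\widehat U_{\boldsymbol k}|^2
\le C_\lambda^2\sum_{|\boldsymbol k|>K}|\boldsymbol k|^{2}(1+|\boldsymbol k|^2)^{-(s+1)}(1+|\boldsymbol k|^2)^{s+1}|\widehat U_{\boldsymbol k}|^2 .
\end{eqnarray*}
For $|\boldsymbol k|>K$ and $s\ge0$ we have $|\boldsymbol k|^2(1+|\boldsymbol k|^2)^{-(s+1)}\le(1+|\boldsymbol k|^2)^{-s}\le K^{-2s}$. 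This gives $\|U-\Pi_KU\|_{\overline H^1_{P,0}}\le C_\lambda K^{-s}\|U\|_{H_0^{s+1}}$.

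\emph{$L^2$ estimate.} Similarly,
\begin{eqnarray*}
\|U-\Pi_KU\|_{L^2_0}^2=\sum_{|\boldsymbol k|>K}|\widehat U_{\boldsymbol k}|^2\le \sup_{|\boldsymbol k|>K}(1+|\boldsymbol k|^2)^{-(s+1)}\,\|U\|^2_{H_0^{s+1}}\le K^{-2(s+1)}\|U\|_{H_0^{s+1}}^2 .
\end{eqnarray*}
The point of this step is that, unlike under mere $\overline H_{P,0}$-regularity (Theorem \ref{bad_lower_bound}), membership in the standard Sobolev space $H_0^{s+1}$ already puts $U$ in $L^2_0$, and the tail is controlled by $|\boldsymbol k|$ rather than by $|P\boldsymbol k|$.

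The only delicate point is the first step, namely matching the hypotheses of Theorem \ref{theorem_exists_unique_Sobolev} with the index shifted by one. The remaining two estimates are routine tail bounds.
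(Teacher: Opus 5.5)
Your proposal is correct and follows the paper's proof: you invoke Theorem \ref{theorem_exists_unique_Sobolev} at level $s+1$ and then bound the Fourier tail. The only differences are minor. You inline the embedding $H_0^1(\mathbb T^n)\hookrightarrow\overline H^1_{P,0}(\mathbb T^n)$ through $|P\boldsymbol k|\le C_\lambda|\boldsymbol k|$, where the paper quotes the standard $H_0^1$ projection estimate, and you rightly note that the index shift needs $N>n+s+1+\tau$ rather than the stated $N>n+s+\tau$, which the paper's proof assumes without comment.
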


\begin{proof}
By Theorem \ref{theorem_exists_unique_Sobolev} with \(s\) replaced by \(s+1\), we have
\begin{eqnarray*}
U\in H_0^{s+1}(\mathbb T^n).
\end{eqnarray*}
Then the following estimates follow from the standard approximation property 
of the Fourier projection \(\Pi_K\):
\begin{eqnarray*}
\|U-\Pi_K U\|_{H_0^1}&\leq& C K^{-s}\|U\|_{H_0^{s+1}},\\
\|U-\Pi_K U\|_{L^2_0}&\leq& C K^{-(s+1)}\|U\|_{H_0^{s+1}}.
\end{eqnarray*}
Since \( H_0^1(\mathbb T^n) \hookrightarrow \overline{H}_{P,0}^1(\mathbb T^n)\), 
the desired results follows and the proof is complete. 
\end{proof}

\begin{remark}
It is worth noting that, although the convergence rate derived here is similar to that in \cite[Theorem 3.1]{JiangLiZhang}, 
a crucial difference lies in the treatment of regularity. 
The convergence result in \cite{JiangLiZhang} assumes a priori that the parent function $U$ 
of the exact solution and the source term $F$ already possess the Sobolev regularity required 
for the approximation analysis.
For quasiperiodic problems, however, such assumptions are not entirely natural, 
since the required regularity should preferably be justified from the variational problem 
itself rather than imposed directly in the high-dimensional periodic setting.

A main contribution of the present work is to provide a detailed study of the 
regularity of quasiperiodic problems and to derive a regularity improvement result 
for the exact solution under suitable assumptions on the source term.
More precisely, we identify natural sufficient conditions on $A$ and $F$ which 
guarantee that the solution $U$ belongs to the Sobolev space needed for the numerical approximation.
\end{remark}

\section{Tensor neural network based machine learning method}\label{Section_TNN_ML}
In this section, we propose a TNN-based machine learning method for solving the 
high-dimensional periodic problem \eqref{Projection_Problem}.
An approximate solution to the original quasiperiodic problem is then obtained by applying 
the pullback mapping to the resulting high-dimensional TNN solution.
\subsection{Tensor neural network architecture}\label{Subsection_TNN}
In this subsection, we introduce the TNN structure and its approximation 
properties and together with several techniques for improving numerical stability. 
The approximation property and computational complexity of related 
integration associated with the TNN structure have been discussed in \cite{WangJinXie}.

The TNN is built with $n$ subnetworks, and each subnetwork is 
a continuous mapping from a bounded closed set $\mathbb T\subset\mathbb R$ 
to $\mathbb R^p$, which can be expressed as
\begin{eqnarray}\label{def_FNN}
\Phi_i(y_i;\theta_i)&=&\big(\phi_{i,1}(y_i;\theta_i), \phi_{i,2}(y_i;\theta_i),\cdots,\phi_{i,p}(y_i;\theta_i)\big)^\top  , 
\end{eqnarray}
where $i=1, \cdots, n$, each $y_i$ denotes the one-dimensional input, 
$\theta_i$ denotes the parameters of the $i$-th subnetwork, typically the weights and biases. 
As illustrated in Figure \ref{TNNstructure}, the TNN structure is composed of $p$ Feedforward 
Neural Networks (FNNs) for spatial basis functions $\Phi_i(y_i;\theta_i)$, $i=1,2,\cdots, n$. 
\begin{figure}[htb!]
\centering
\includegraphics[width=7.7cm,height=8cm]{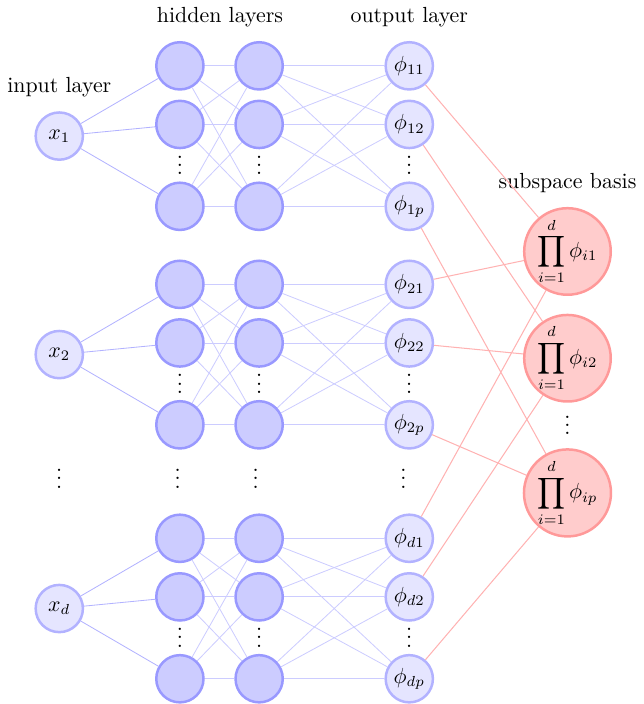}
\caption{Architecture of TNN. Blue arrows mean linear transformation
(or affine transformation). Each ending node of red arrows is obtained by taking the
scalar multiplication of all starting nodes of red arrows that end in this ending node.
The final output of TNN is derived from the linear combination
of all red nodes.}\label{TNNstructure}
\end{figure}

In order to improve the numerical stability, we normalize each 
$\phi_{i,j}(y_i)$ and use the following normalized TNN structure:
\begin{eqnarray}\label{def_TNN_normed}
\Psi(\boldsymbol{y};c,\theta )&=&\sum_{j=1}^p{c_j}\widehat{\phi}_{1,j}(y_1;\theta_1)
\cdots \widehat{\phi}_{n,j}(y_n;\theta_n)\nonumber\\
&=&\sum_{j=1}^p{c_j}\prod_{i=1}^n{\widehat{\phi }_{i,j}(y_i;\theta_i)}
=:\sum_{j=1}^pc_j\varphi_j(\boldsymbol{y};\theta), 
\end{eqnarray}
where each $c_j$ is a scaling parameter with respect to the normalized rank-one function, 
$c=\{c_j\}_{j=1}^{p}$ denotes the linear coefficients for the 
basis system built by the rank-one functions $\varphi _j(\boldsymbol{y};\theta )$, $j=1, \cdots, p$. 
For $i=1, \cdots, n$,  $j=1,\cdots, p$,  $\widehat\phi_{i,j}(y_i;\theta_i)$ 
are $L^2$-normalized function as follows:
\begin{eqnarray}\label{normalized}
\widehat{\phi}_{i,j}(y_i;\theta_i)=\frac{\phi_{i,j}(y_i;\theta_i)}
{\left\|\phi_{i,j}(y_i;\theta_i) \right\|_{L^2(\mathbb T)}}. 
\end{eqnarray}
For simplicity of notation, $\phi_{i,j}(y_i;\theta_i)$ denotes 
the normalized function in the following parts.

Due to the isomorphism relation between $L^2(\Omega_1\times\cdots\times\Omega_n)$ 
and the tensor product space $L^2(\Omega_1)\otimes\cdots\otimes L^2(\Omega_{n})$ 
with $\Omega_i=\mathbb T$ for $i=1, \cdots, n$,   
the process of approximating the function $f(x)\in L^2(\Omega_1\times\cdots\times\Omega_{n})$ 
with the TNN defined by (\ref{def_TNN_normed}) is actually to search for a correlated CP decomposition 
to approximate $F(\boldsymbol{y})$ in the space $L^2(\Omega_1)\otimes\cdots\otimes L^2(\Omega_{n})$ 
with rank not greater than $p$.
Thanks to the low-rank structure, we will find that 
the polynomial mapping acting on the TNN and its derivatives can be done with small 
scale computational work \cite{WangJinXie}. In order to show the validity of solving PDEs by the TNN,  
we introduce the following approximation result to the functions of 
the space $L^2(\Omega_1\times\cdots\times\Omega_{n})$ in the sense 
of $H^m(\Omega_1\times\cdots\times\Omega_n)$-norm. 
\begin{theorem}\cite[Theorem 1]{WangJinXie}\label{theorem_approximation}
Assume that each $\Omega_i$ is an interval in $\mathbb R$ for $i=1, \cdots, n$, $\Omega=\Omega_1\times\cdots\times\Omega_n$,
and the function $f(\boldsymbol{x})\in H^m(\Omega)$. 
Then for any tolerance $\varepsilon>0$, there exist a
positive integer $p$ and the corresponding TNN defined by (\ref{def_TNN_normed})
such that the following approximation property holds
\begin{equation}\label{eq:L2_app}
\|f(\boldsymbol{x})-\Psi(\boldsymbol{x};c,\theta)\|_{H^m(\Omega)}<\varepsilon.
\end{equation}
\end{theorem}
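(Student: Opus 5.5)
The plan is to combine a separation-of-variables (tensor product) approximation with the universal approximation property of one-dimensional feedforward networks in $H^m$. First, I approximate $f$ by a finite sum of rank-one products. Since each $\Omega_i$ is a bounded interval, I fix on each $\Omega_i$ an orthonormal basis $\{e^{(i)}_{\ell}\}$ of $L^2(\Omega_i)$ consisting of smooth functions, for instance Legendre polynomials rescaled to $\Omega_i$. Products of such polynomials span the full polynomial space on the box $\Omega$, and polynomials are dense in $H^m(\Omega)$ for a Lipschitz box: extend $f$ to a compactly supported $H^m$ function, mollify to get a smooth function, and approximate it by polynomials in $C^m(\overline\Omega)$ via Weierstrass-type results. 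This yields a finite linear combination
\[
f_p(\boldsymbol{x})=\sum_{j=1}^{p} c_j\prod_{i=1}^n g_{i,j}(x_i)
\]
with $\|f-f_p\|_{H^m(\Omega)}<\varepsilon/2$, where each $g_{i,j}$ is a univariate polynomial. After absorbing norms into $c_j$, I take $\|g_{i,j}\|_{L^2(\Omega_i)}=1$.

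Second, I replace each univariate factor $g_{i,j}$ by the output $\phi_{i,j}(x_i;\theta_i)$ of a subnetwork. I use the known universal approximation theorem for one-hidden-layer networks with smooth non-polynomial activation (e.g.\ $\tanh$ or $\sin$) in $C^m$ on compact intervals, in the style of Hornik/Pinkus. This gives, for any $\delta>0$, networks with $\|g_{i,j}-\phi_{i,j}\|_{C^m(\overline{\Omega_i})}<\delta$. All $p$ outputs of the $i$-th subnetwork can be realized by stacking the hidden units into a single network $\Phi_i$.

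Third comes the error estimate for the products, which I expect to be the only step requiring care. I telescope:
\[
\prod_i g_{i,j}-\prod_i\phi_{i,j}=\sum_{q=1}^n\Big(\prod_{i<q}\phi_{i,j}\Big)(g_{q,j}-\phi_{q,j})\Big(\prod_{i>q}g_{i,j}\Big).
\]
For tensor-product functions, any mixed derivative $\partial^\alpha$ with $|\alpha|\le m$ factorizes, so the $H^m(\Omega)$ norm of each term is bounded by the product of the one-dimensional $C^{m}$ norms times $|\Omega|^{1/2}$. This gives a bound of the form
\[
C\,n\,\delta\,\bigl(\max_{i,j}\|g_{i,j}\|_{C^m}+1\bigr)^{n-1}.
\]
Summing over $j$ with weights $|c_j|$ and choosing $\delta$ small makes the total network error below $\varepsilon/2$.

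Finally, I handle the normalization in \eqref{normalized}. Since $\|\phi_{i,j}\|_{L^2}\to\|g_{i,j}\|_{L^2}=1$ as $\delta\to0$, rescaling each factor by its norm changes it by $O(\delta)$ in $C^m$, so the same estimate applies after shrinking $\delta$. The normalizing constants can alternatively be absorbed into $c_j$ exactly, since the normalized rank-one product equals the unnormalized one divided by a positive scalar. The triangle inequality then gives \eqref{eq:L2_app}.

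The main obstacle is mostly bookkeeping: verifying the $C^m$ universal approximation for the chosen activation, and controlling the constant growth in the telescoping sum. Both are standard once $p$ and the $g_{i,j}$ are fixed before choosing $\delta$.
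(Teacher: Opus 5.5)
Your proof is correct. The paper does not prove this statement itself; it quotes \cite[Theorem 1]{WangJinXie}, and your argument follows the same outline as that source: a separated (tensor-product) approximation of $f$, one-dimensional universal approximation of each factor, a telescoping bound for the products, and absorption of the normalizing constants into $c_j$. Your polynomial-density argument on the bounded box is a clean way to show that finite sums of products of one-dimensional functions are dense in $H^m(\Omega)$, which is exactly what the paper's remark about the $L^2$ tensor-product isomorphism does not by itself give.
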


\subsection{Adaptive TNN subspace and loss functions}\label{Subsection_TNN_ML}
This subsection is devoted to designing a type of adaptive optimization of TNN parameters for 
solving the quasiperiodic elliptic problem (\ref{Elliptic_Equation}) with high accuracy. 
For this aim, we first introduce the solving procedure 
and then show the way to compute the high-dimensional integration included in the loss functions.
The basic idea is to generate a subspace using the output functions of TNN as the basis functions,  
where we find an approximate solution in some sense.  Then the training steps are adopted to update the 
TNN subspace to improve its approximation property \cite{WangLinLiaoLiuXie}. 

Based on the definition (\ref{def_TNN_normed}), we define the TNN subspace as follows: 
\begin{eqnarray}\label{Subspace_V_p}
\mathcal V_{p}:={\rm span}\Big\{\varphi_{j}(\boldsymbol{y};\theta), j=1, \cdots, p\Big\},
\end{eqnarray}
where $\varphi_{j}(\boldsymbol{y};\theta)$ is the rank-one function defined in (\ref{def_TNN_normed}).

Then the subspace approximation procedure can be adopted to get the solution $U_p\in \mathcal V_p$. 
For this purpose, in this paper, the subspace approximation $U_p\in \mathcal V_p$ 
is obtained by solving the following discrete Galerkin problem: 
Find $U_p\in \mathcal V_p$ such that
\begin{eqnarray}\label{Galerkin_Problem}
a(U_p, V_p)=(F, V_p),\ \  \forall V_p\in\mathcal V_p.
\end{eqnarray}
The following theorem shows that the solution of discrete Galerkin problem \eqref{Galerkin_Problem} 
has the best approximation property with respect to the $\overline H_{P,0}^1$-norm.

\begin{theorem}\label{Theorem_Error_Estimate}
Let $U$ is the solution of variational problem \eqref{Projection_Problem} 
and  $U_p$ is the solution of discrete Galerkin problem \eqref{Galerkin_Problem}, 
then the following inequality holds
\begin{eqnarray}\label{eq_Cea}
\| U-U_p \|_{\overline H_{P,0}^1}\leq \frac{\alpha_1}{\alpha_0} \inf_{V_p\in \mathcal V_p}\|  U-V_p \|_{\overline H_{P,0}^1}
\end{eqnarray}
\end{theorem}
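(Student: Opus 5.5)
This is the standard C\'ea lemma, and I would prove it with Galerkin orthogonality, coercivity \eqref{Ellipticity_a_Periodic} and continuity \eqref{Boundedness_a_Periodic}. First I will check that the discrete problem \eqref{Galerkin_Problem} makes sense as a conforming Galerkin scheme. The TNN basis functions $\varphi_j$ are finite sums of products of smooth periodic one-dimensional functions, so they lie in $H^1(\mathbb T^n)$. After removing the mean, which I will assume is done (or imposed) so that $\mathcal V_p$ has zero-mean elements, we get $\mathcal V_p\subset H_0^1(\mathbb T^n)\hookrightarrow \overline H^1_{P,0}(\mathbb T^n)$ by the earlier embedding lemma $\overline H^s_0\hookrightarrow\overline H^s_{P,0}$. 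Since $\mathcal V_p$ is a subspace of $\overline H^1_{P,0}(\mathbb T^n)$, Lax--Milgram restricted to $\mathcal V_p$ (as in Lemma \ref{exist_unique_simple}) gives existence and uniqueness of $U_p$. The right-hand side $(F,V_p)$ is read as the duality pairing $\langle F,V_p\rangle$ of \eqref{Projection_Problem}.

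Second, I will obtain Galerkin orthogonality. Testing \eqref{Projection_Problem} with $V=V_p\in\mathcal V_p$ and subtracting \eqref{Galerkin_Problem} gives $a(U-U_p,V_p)=0$ for all $V_p\in\mathcal V_p$.

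Third, for an arbitrary $V_p\in\mathcal V_p$, I will combine coercivity, orthogonality (using $U_p-V_p\in\mathcal V_p$) and continuity:
\begin{eqnarray*}
\alpha_0\|U-U_p\|_{\overline H^1_{P,0}}^2\leq a(U-U_p,U-U_p)=a(U-U_p,U-V_p)\leq \alpha_1\|U-U_p\|_{\overline H^1_{P,0}}\|U-V_p\|_{\overline H^1_{P,0}}.
\end{eqnarray*}
Dividing by $\|U-U_p\|_{\overline H^1_{P,0}}$ (the case where it vanishes is trivial) and taking the infimum over $V_p$ gives \eqref{eq_Cea}.

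The only delicate point is the conformity $\mathcal V_p\subset\overline H^1_{P,0}(\mathbb T^n)$, specifically the zero-mean requirement, because Lemma \ref{not_embedding_L2} shows there is no Poincar\'e inequality to fall back on. I would handle this by defining $\mathcal V_p$ modulo constants, equivalently by subtracting the mean of each $\varphi_j$. This is harmless because the seminorm $\|P\nabla\cdot\|$ does not see constants. If $A$ is complex-valued or the spaces are complex, I would use the sesquilinear version of the same argument with no other changes.
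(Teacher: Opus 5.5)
Your proof is correct and follows the paper's argument essentially verbatim: Galerkin orthogonality $a(U-U_p,V_p)=0$, then coercivity, the splitting $a(U-U_p,U-U_p)=a(U-U_p,U-V_p)$ using $U_p-V_p\in\mathcal V_p$, continuity, division, and the infimum over $V_p$. You also check explicitly that $\mathcal V_p\subset\overline H^1_{P,0}(\mathbb T^n)$ via mean removal, consistent with the mean-subtracted TNN function $\widehat{\Psi}$ used later in the paper; the paper relies on this conformity implicitly, and you handle it correctly.
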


\begin{proof}
The following equality can be directly obtained from  \eqref{Projection_Problem} and \eqref{Galerkin_Problem} 
\begin{eqnarray*}
a(U-U_p,V_p)=0,\ \ \ \forall V_P\in\mathcal V_p.
\end{eqnarray*}
Then from (\ref{Ellipticity_a_Periodic})-(\ref{Boundedness_a_Periodic}),  for any $V_p\in \mathcal V_P$, the following inequality holds
\begin{eqnarray*}
\alpha_0 \| U-U_p \|_{\overline H_{P,0}^1}^2 &\leq& a(U-U_p,U-U_p)\\
&=&a(U-U_p, U-V_p)+a(U-U_p,V_p-U_p)\\
&=&a(U-U_p, U-V_p)\\
&\leq& \alpha_1 \| U-U_p  \|_{\overline H_{P,0}^1} \| U-V_p \|_{\overline H_{P,0}^1}.
\end{eqnarray*}
From this, we can obtain
\begin{eqnarray*}
\| U-U_p \|_{\overline H_{P,0}^1}\leq \frac{\alpha_1}{\alpha_0} \|  U-V_p \|_{\overline H_{P,0}^1}.
\end{eqnarray*}
Since the arbitrariness of $V_p$, \eqref{eq_Cea} can be derived, and the theorem is proved.
\end{proof}

Based on the basis functions of the subspace $\mathcal V_p$ in (\ref{Subspace_V_p}), we can assemble the 
corresponding stiffness matrix and right hand side term as in the finite element method. 
By solving the deduced linear system, we obtain the Galerkin approximation 
$\Psi$ \eqref{def_TNN_normed} in subspace $\mathcal V_p$.

After obtaining the approximation $\Psi\in \mathcal V_p$, the subspace $\mathcal V_p$ is 
updated by optimizing the loss function in the 
training steps which will be defined in Subsection \ref{SubSection_TNN}. 
Then following the idea of the adaptive finite element methods or 
moving mesh methods, we update the subspace $\mathcal V_p$ adaptively 
during the training process of the machine learning method. 
By ``adaptively'', we mean that the NN parameters and thus the fixed-structure 
NN subspace are updated based on some criterion, which in this case is the 
optimization of the loss function, to achieve better accuracy performance. 
For example, if the Ritz-type loss function 
\begin{eqnarray}\label{Ritz_Loss}
\mathcal L(\Psi) = \frac{1}{2}a(\Psi,\Psi)-(F,\Psi)
\end{eqnarray}
is used, then the training process is actually to 
minimize the error $\|U-\Psi\|_a$ of the NN approximation $\Psi$ in the energy norm.
\begin{theorem}\label{Theorem_Ritz_Optimization}
Let $\mathcal V_{\rm NN}$ denote the neural network-based trial space.  
If the NN function $U_{\rm NN}\in  \mathcal V_{\rm NN}\subset \overline H_{P,0}^1(\mathbb T^n)$ 
is defined by the following optimization problem 
\begin{eqnarray}\label{Ritz_Problem}
U_{\rm NN} = \arg\inf_{V\in\mathcal V_{\rm NN}}
\left[\frac{1}{2}a(V,V)-(F,V)\right], 
\end{eqnarray} 
then the following optimal property holds 
\begin{eqnarray}\label{Ritz_Optimal}
\left\|U-U_{\rm NN}\right\|_a
= \inf_{V\in \mathcal V_{\rm NN}}\left\|U-V\right\|_a.
\end{eqnarray}
\end{theorem}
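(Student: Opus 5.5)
The plan is to reduce the Ritz minimization to a minimization of the energy-norm error by completing the square. The starting point is that $U$ solves \eqref{Projection_Problem}. Hence, for every $V\in\mathcal V_{\rm NN}\subset \overline H_{P,0}^1(\mathbb T^n)$, we may substitute $(F,V)=a(U,V)$ into the Ritz functional. This is legitimate because $V$ is an admissible test function in \eqref{Projection_Problem}. Since $a(\cdot,\cdot)$ is symmetric (the coefficient $A$ is a scalar), we get
\begin{eqnarray*}
\frac12 a(V,V)-(F,V)=\frac12 a(V,V)-a(U,V)=\frac12 a(U-V,U-V)-\frac12 a(U,U)=\frac12\|U-V\|_a^2-\frac12\|U\|_a^2 .
\end{eqnarray*}

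The key observation is that the term $\frac12\|U\|_a^2$ depends only on $U$, which is fixed and finite by Lemma \ref{exist_unique_simple} together with the continuity bound \eqref{Boundedness_a_Periodic}. Therefore, minimizing the Ritz functional over $\mathcal V_{\rm NN}$ is equivalent to minimizing $\|U-V\|_a^2$ over $\mathcal V_{\rm NN}$. The two objectives differ by a constant, so they have the same minimizers, and their infima are related by
\begin{eqnarray*}
\inf_{V\in\mathcal V_{\rm NN}}\Big[\frac12 a(V,V)-(F,V)\Big]=\frac12\inf_{V\in\mathcal V_{\rm NN}}\|U-V\|_a^2-\frac12\|U\|_a^2 .
\end{eqnarray*}

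Evaluating the identity at $V=U_{\rm NN}$, which attains the infimum by the definition \eqref{Ritz_Problem}, gives $\|U-U_{\rm NN}\|_a^2=\inf_{V}\|U-V\|_a^2$. Taking square roots then yields \eqref{Ritz_Optimal}, since the square root is monotone on nonnegative reals.

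There is no real obstacle in this argument. The only points that need care are the following.
\begin{itemize}
\item The inclusion $\mathcal V_{\rm NN}\subset\overline H_{P,0}^1(\mathbb T^n)$ is what permits testing \eqref{Projection_Problem} with $V$.
\item The symmetry of $a$ is what allows the completion of the square.
\item The statement implicitly assumes the arg-inf is attained. If it is not, the same identity shows that any minimizing sequence of the Ritz functional is a minimizing sequence for the energy error, so the equality holds in the sense of infima.
\end{itemize}
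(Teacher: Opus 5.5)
Your proposal is correct and takes essentially the same route as the paper. You substitute $(F,V)=a(U,V)$, add the constant $\frac12\|U\|_a^2$, and complete the square, so the Ritz functional differs from $\frac12\|U-V\|_a^2$ only by a constant; your explicit use of the symmetry of $a$ and your remark on attainment of the infimum make precise what the paper's argument uses implicitly.
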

\begin{proof}
It is easy to know that the exact solution $U$ satisfy the following problem 
\begin{eqnarray}
U = \arg\inf_{V\in \overline H^1_{P}}\left[\frac{1}{2}a(V,V)-(F,V)\right].
\end{eqnarray}
From the definitions for the exact solution $U$ and $U_{\rm NN}$, we have the following optimality
\begin{eqnarray*}\label{Equality_1}
\frac{1}{2}\left\|U_{\rm NN}\right\|_a^2-\left(F,U_{\rm NN}\right) 
= \inf\limits_{V\in \mathcal V_{\rm NN}}\left(\frac{1}{2}\left\|V\right\|_a^2-\left(F, V\right)\right).
\end{eqnarray*}
Then  the following equality for $U$ and $U_{\rm NN}$ holds 
\begin{eqnarray*}\label{Equality_2}
\frac{1}{2}\left\|U_{\rm NN}\right\|_a^2-a(U, U_{\rm NN})
+\frac{1}{2}\left\|U\right\|_a^2
= \inf\limits_{V\in \mathcal V_{\rm NN}}
\left(\frac{1}{2}\left\|V\right\|_a^2-a(U, V)+\frac{1}{2}\left\|U\right\|_a^2\right).
\end{eqnarray*}
The following equality hold 
\begin{eqnarray*}
\frac{1}{2}\left\|U-U_{\rm NN}\right\|_a^2 
= \inf_{U\in \mathcal V_{\rm NN}}\frac{1}{2}\left\|U-V\right\|_a^2,
\end{eqnarray*}
which leads to the optimal approximation. 
\end{proof}
Theorem \ref{Theorem_Ritz_Optimization} shows that the minimum point 
of the Ritz type of loss function 
is equivalent to arrive the optimal error estimates among 
all the choices of NN parameters $\theta$ and $c$.  
In Algorithm \ref{Algorithm_1}, the NN subspace is updated by minimizing 
the loss function which defines the target of the training steps.  

Inspired by the standard PINN framework, one may naturally consider using a residual-type loss function in the training process.
However, for quasiperiodic problems, such a loss function is not always appropriate.
To illustrate this point, we present a few intuitive examples.
\begin{lemma}\label{Theorem_Residual_Los}
If $A$ is a constant function, then for every $\varepsilon>0$, there exist function $V\in \overline H_{P,0}^1(\mathbb T^n)$ 
such that 
\begin{eqnarray}
\frac{\|{\rm div}\left(P^\top  AP\nabla V\right)\|_0^2}{\| V\|_{\overline H^1_{P,0}}^2} \leq \varepsilon. 
\end{eqnarray}
\end{lemma}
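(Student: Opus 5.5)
The plan is to take $V$ to be a single Fourier mode $V=e^{{\rm i}\boldsymbol{k}^\top\boldsymbol{y}}$ with $\boldsymbol{k}\in\mathbb Z^n\setminus\{\boldsymbol 0\}$ and compute both norms explicitly. Since $A\equiv A_0$ is constant, $P\nabla V={\rm i}(P\boldsymbol{k})V$. Then ${\rm div}(P^\top A_0 P\nabla V)=-A_0|P\boldsymbol{k}|^2 V$. The residual therefore satisfies $\|{\rm div}(P^\top AP\nabla V)\|_0^2=A_0^2|P\boldsymbol{k}|^4$, using the normalization $\|e^{{\rm i}\boldsymbol{k}^\top\boldsymbol{y}}\|_{L^2}=1$ consistent with the Fourier-coefficient definition of $L^2(\mathbb T^n)$. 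By definition, $\|V\|_{\overline H^1_{P,0}}^2=|P\boldsymbol{k}|^2$, so
\begin{eqnarray*}
\frac{\|{\rm div}(P^\top AP\nabla V)\|_0^2}{\|V\|_{\overline H^1_{P,0}}^2}=A_0^2|P\boldsymbol{k}|^2 .
\end{eqnarray*}
This is nonzero, because $P\boldsymbol{k}\neq\boldsymbol 0$ by the $\mathbb Q$-linear independence of the columns of $P$.

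It then suffices to make $|P\boldsymbol{k}|$ small. The proof of Lemma \ref{not_embedding_L2} showed that $\boldsymbol 0$ is an accumulation point of $\Gamma=\{P\boldsymbol{k}:\boldsymbol{k}\in\mathbb Z^n\}$. So for given $\varepsilon>0$ there is $\boldsymbol{k}\neq\boldsymbol 0$ with $0<|P\boldsymbol{k}|^2\leq \varepsilon/A_0^2$, and the corresponding single mode $V$ satisfies the required inequality.

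It remains to check that $V\in\overline H^1_{P,0}(\mathbb T^n)$ and that the residual norm is finite. Both are immediate for a trigonometric monomial with nonzero frequency. If one prefers real-valued $V$, the same computation applies to $\cos(\boldsymbol{k}^\top\boldsymbol{y})$, whose modes $\pm\boldsymbol{k}$ have the same value of $|P\boldsymbol{k}|$. There is no real obstacle; the only substantive ingredient is the density of $\Gamma$ near $\boldsymbol 0$, which Lemma \ref{not_embedding_L2} already provides. The point of the lemma is that the residual loss is not coercive relative to the energy norm, so a small residual does not control the energy error.
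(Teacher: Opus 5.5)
Your proposal is correct and follows essentially the same route as the paper. Both take a single Fourier mode $e^{{\rm i}\bar{\boldsymbol{k}}^\top\boldsymbol{y}}$, for which the ratio equals $A_0^2|P\bar{\boldsymbol{k}}|^2$, and then use the accumulation of $\Gamma$ at $\boldsymbol 0$ (established in the proof of Lemma \ref{not_embedding_L2}) to make it smaller than $\varepsilon$; the paper merely computes both norms for a general Fourier series first and bounds the constant by $\alpha_1$, which is a cosmetic difference.
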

\begin{proof}
For $V\in \overline H^1_{P,0}(\mathbb T^n)$, the following equation holds
\begin{eqnarray*}
\| V\|_{\overline H^1_{P,0}}^2&=&\|P\nabla V\|_0^2\\
&=&\left(\sum_{\boldsymbol{k}\in\mathbb Z^n\setminus\{\boldsymbol 0\}}
{\rm i}\widehat V_{\boldsymbol{k}}P\boldsymbol{k}e^{{\rm i}\boldsymbol{k}^\top  \boldsymbol{y}}, 
\sum_{\boldsymbol{k}\in\mathbb Z^n\setminus\{\boldsymbol 0\}}{\rm i} 
\widehat V_{\boldsymbol{k}}P\boldsymbol{k}e^{{\rm i}\boldsymbol{k}^\top  \boldsymbol{y}}\right)\\
&=&\sum_{\boldsymbol{k}\in\mathbb Z^n\setminus\{\boldsymbol 0\}}
|\widehat V_{\boldsymbol{k}} |^2\|P\boldsymbol{k}\|^2,
\end{eqnarray*}
where the last equation is due to \eqref{B_P^2_orth}.
Similarly, 
\begin{eqnarray*}
&&\left\|{\rm div}\left(P^\top  AP\nabla V\right)\right\|_0^2\\
&=&\left(\sum_{\boldsymbol{k}\in\mathbb Z^n\setminus\{\boldsymbol 0\}}
{\rm i}\widehat V_{\boldsymbol{k}}\ {\rm div}\left(P^\top  A P\boldsymbol{k}e^{{\rm i}\boldsymbol{k}^\top \boldsymbol{y}}\right), 
\sum_{\boldsymbol{k}\in\mathbb Z^n\setminus\{\boldsymbol 0\}}{\rm i} 
\widehat V_{\boldsymbol{k}}\ {\rm div}\left(P^\top  AP\boldsymbol{k}e^{{\rm i}\boldsymbol{k}^\top \boldsymbol{y}}\right)\right)\\
&=&\left(\sum_{\boldsymbol{k}\in\mathbb Z^n\setminus\{\boldsymbol 0\}}
\widehat V_{\boldsymbol{k}}\left(\boldsymbol{k}^\top  P^\top  A P\boldsymbol{k}\right)e^{{\rm i}\boldsymbol{k}^\top \boldsymbol{y}}, 
\sum_{\boldsymbol{k}\in\mathbb Z^n\setminus\{\boldsymbol 0\}}
\widehat V_{\boldsymbol{k}}\left(P^\top  AP\boldsymbol{k}\right)e^{{\rm i}\boldsymbol{k}^\top \boldsymbol{y}}\right)\\
&=&\sum_{\boldsymbol{k}\in\mathbb Z^n\setminus\{\boldsymbol 0\}}
|\widehat V_{\boldsymbol{k}} |^2\|A\|^2\|P\boldsymbol{k}\|^4.
\end{eqnarray*}
From Remark \ref{Remark_1}, we can know the set $\Gamma$ defined in (\ref{Definition_SubGroup}) 
has the accumulation point  $\mathbf 0\in \mathbb R^d$. Then for arbitrary small $\varepsilon>0$, there exist 
$\bar{\boldsymbol{k}} \in \mathbb Z^n$ such that 
\begin{eqnarray*}
\|P\bar{\boldsymbol{k}}\|^2 < \frac{\varepsilon}{\alpha_1^2}, 
\end{eqnarray*} 
where the constant $\alpha_1$ is defined in (\ref{Lower_Upper_Bound_alpha}).  
Let us chose $\bar V=e^{{\rm i}\bar{\boldsymbol{k}}^\top \boldsymbol y}$. The following inequality holds 
\begin{eqnarray*}
\frac{\|{\rm div}\left(P^\top  AP\nabla \bar V\right)\|_0^2}{\| V\|_{\overline H^1_{P,0}}^2} 
\leq \|A\|^2 \|P\bar{\boldsymbol{k}}\|^2 <\varepsilon.
\end{eqnarray*}
Then the proof is complete. 
\end{proof} 
\begin{remark}
In Lemma \ref{Theorem_Residual_Los}, we assume that $A$ is constant only to make the underlying mechanism 
behind the potential failure of the PINN loss more transparent to the reader.
In fact, the same conclusion remains valid whenever the eigenvalue problem
\begin{eqnarray*}
LV:=-{\rm div}(P^\top  AP\nabla V)  =\lambda V
\end{eqnarray*}
admits a sequence of eigenvalues converging to $0$.
Indeed, if $(\lambda_j, V_j)$ is such a sequence, then
\begin{eqnarray*}
\frac{\|LV_j\|_0^2}{\|V\|_{\overline H^1_{P,0}}^2}=\lambda_j\rightarrow 0,
\end{eqnarray*}
and hence conclusion of Lemma \ref{Theorem_Residual_Los} still follows.

Such a situation is not exceptional. For instance, it typically occurs when the coefficient $A$ is decoupled in one direction.
More precisely, when $A$ depends only on $n-1$ variables in an $n$-dimensional problem.
In that case, it is well known that the operator $L$ admits a fiber decomposition, from which one can obtain a sequence of point spectrum tending to $0$.

For more general coefficients $A$, the coupling between different frequencies makes it difficult 
to derive a general result and requires additional analytical techniques
This issue will be further explored in our future work.
\end{remark}
In PINN, the residual type of loss function is defined as follows 
\begin{eqnarray}\label{loss_Residual}
\mathcal L(U_p)=\|F+{\rm div}(P^\top  A P\nabla U_p)\|_0^2.
\end{eqnarray}
Then combining (\ref{Projection_Problem}), we have 
\begin{eqnarray}\label{Posteriori_Error_Estimate_2}
\mathcal L(U_p)=\|{\rm div}(P^\top  A P\nabla U)-{\rm div}(P^\top  A P\nabla U_p)\|_0^2
=\|{\rm div}(P^\top  A P\nabla(U-U_P))\|_0^2.
\end{eqnarray}
Based on this equality, Lemma \ref{Theorem_Residual_Los} implies that the residual type of loss function (\ref{loss_Residual}) 
can not control the error $\|U-U_p\|_{\overline{H}_{P,0}^1}^2$, and hence cannot control $\|U-U_p\|_a^2$, either. 
This suggests that the choice of the loss function requires particular care.

\subsection{Procedure of TNN-based machine learning method}\label{SubSection_TNN}
In this subsection, we introduce the adaptive TNN subspace method to solve  
high-dimensional problem (\ref{Projection_Problem}) and then the original 
quasiperiodic problem (\ref{Elliptic_Equation}) by 
executing the projection method described in Section \ref{Section_Projection}. 
Then the machine learning method can be described as follows: 
\begin{enumerate}
\item Solve the higher-dimensional periodic problem (\ref{Projection_Problem}) 
using TNN-based machine learning method.  

\item Compute an approximate solution to the original quasiperiodic elliptic problem \eqref{Elliptic_Equation} 
by projecting back through the pullback isomorphism $\mathcal{J}_P$.
\end{enumerate}
In the first step, in order to solve the high-dimensional problems (\ref{Projection_Problem}), 
we build the TNN function $\Psi(\boldsymbol{y}; c, \Theta)$ 
described in Section \ref{Subsection_TNN} as the approximation to the functions 
in $\overline H_{P,0}^1(\mathbb T^n)$. 
In order to make $\Psi(\boldsymbol{y};c, \Theta)$ 
satisfy the periodic boundary condition on $\mathbf{y}$, 
we choose the $\sin(x)$ as the activation function, and fix all entries in the 
weight matrix of the first layer for each FNN subnetwork on the corresponding dimension 
of $\mathbf{y}$ as the multiples of $2\pi$. 
Meanwhile, the corresponding biases are registered as trainable parameters, 
and so are the remaining parameters for $\mathbf{y}$. 
Under these settings, $\phi_{i,j}(y_i, \theta)$ 
satisfies the periodic boundary condition on $y_i$ automatically.
In addition, since the integral of the solution of equation (\ref{Projection_Problem}) is $0$ 
with respect to $\mathbf{y}$, we construct a new tensor neural network function 
based on $\Psi(\boldsymbol{y}; c, \Theta)$ as
\begin{eqnarray}\label{hat-psi}
\widehat{\Psi}(\boldsymbol{y}; c, \Theta) = \Psi( \boldsymbol{y}; c, \Theta) 
- \int_{\mathbb T^n} \Psi(\boldsymbol{y}; c, \Theta)d\boldsymbol{y} . 
\end{eqnarray}
which satisfies the following condition
\begin{eqnarray}\label{Integration_Constraint}
\int_{\mathbb T^n} \widehat{\Psi}( \boldsymbol{y}; c, \Theta)d\boldsymbol{y} = 0.
\end{eqnarray}
The TNN function $\widehat{\Psi}(\boldsymbol{y}; c, \Theta)$ now satisfies 
the constraints of the solution for the equation (\ref{Projection_Problem}). 
Notice that it is usually tricky to make the trial functions 
satisfy the integration constraint (\ref{Integration_Constraint}) 
of the projection problem (\ref{Projection_Problem}), 
since the accuracy of the high-dimensional integration is always hard to be guaranteed.  
However, due to the special structure of TNN, 
the constraint here is easy and natural to deal with for TNN-based method.

For the training procedure, Theorem \ref{Theorem_Ritz_Optimization} and Lemma \ref{Theorem_Residual_Los} suggest 
that we prefer to primarily use the following Ritz-type loss function associated with \eqref{Galerkin_Problem} 
for the periodic elliptic problem \eqref{Projection_Problem}
\begin{eqnarray}\label{loss_Ritz}
L(\theta) = \int_{\mathbb T^n}
\left(\frac{1}{2}\|A^{\frac{1}{2}}P\nabla \Psi(\boldsymbol{y};c, \Theta)\|^2
-F\Psi(\boldsymbol{y};c, \Theta) \right)d\boldsymbol{y}.
\end{eqnarray}
In our numerical experiments, we observe that, once a sufficiently accurate initial approximation is obtained 
through the Ritz-type loss function, further optimization using the PINN-type residual loss \eqref{loss_Residual} 
may yield additional improvement in accuracy.

After the $\ell$-th training step, the TNN $\Psi (\boldsymbol{y};c,\theta^{(\ell)})$ 
belongs to the following subspace:
\begin{eqnarray*}
\mathcal V_{p}^{(\ell )}:=\mathrm{span}\left\{ \varphi _j(\boldsymbol{y};\theta ^{(\ell )}),
\ \ \  j=1,\cdots ,p \right\},
\end{eqnarray*}
where $\varphi_{j}(\boldsymbol{y};\theta^{(\ell )})$ is the rank-one function defined 
in (\ref{def_TNN_normed}) after  $\ell$ training steps. 

Different from the standard FNN-based machine learning method,  
where the Monte Carlo integration is usually the indispensable option, 
the quadrature scheme with fixed quadrature points can be used 
to do the numerical integrations in this paper. 
Fortunately, based on TNN structure in the loss functions (\ref{loss_Residual}) 
and (\ref{loss_Ritz}),  Theorem 3 in \cite{WangJinXie} shows that these numerical 
integrations here does not encounter ``curse of dimensionality''
since the computational work can be bounded by the polynomial scale of dimension $n$.  
Due to the high accuracy of the high-dimensional integrations with Gauss points, 
all the computational process can be done with high accuracy, 
which is the core reason why the TNN-based method achieves high accuracy.

According to the definition of TNN and the corresponding space $\mathcal V_{p}^{(\ell )}$, 
it is easy to know that the neural network parameters $\theta^{(\ell)}$ 
determine the space $\mathcal V_p^{(\ell)}$, while the coefficient parameters $c^{(\ell)}$ 
determine the direction of TNN in the space $\mathcal V_p^{(\ell)}$. 
We can divide the optimization step into two sub-steps. Firstly, 
assume the neural network parameters $\theta^{(\ell)}$ are fixed. 
The optimal coefficient parameters $c^{(\ell+1)}$  
can be obtained by solving the Galerkin problem (\ref{Galerkin_Problem}), i.e., 
by solving the following linear equation 
\begin{eqnarray}\label{linear_system}
A^{(\ell)}c^{(\ell+1)}=B^{(\ell)},
\end{eqnarray}
where the matrix $A^{(\ell)}\in \mathbb R^{p\times p}$ and the vector 
$B^{(\ell)}\in\mathbb R^{p\times 1}$ are assembled as follows: 
\begin{eqnarray*}
A_{m,n}^{(\ell )}=a(\varphi _{n}^{(\ell )}, \varphi _{m}^{(\ell )} ),\ \  
B_{m}^{(\ell )}=(F, \varphi _{m}^{(\ell )}), \ \  1\le m,n\le p.
\end{eqnarray*}
\begin{remark}
In the process of solving equation (\ref{linear_system}) to obtain 
the coefficients $c^{(\ell +1)}$, the equation may become ill-conditioned 
due to the linear dependence of the basis functions 
$\varphi_{j}(x,t;\theta), j=1, \cdots, p$. In such cases, we can employ techniques 
such as singular value decomposition (SVD) or other methods to obtain the coefficients $c^{(\ell +1)}$. 
In this article, we solve this problem by solving the linear system (\ref{linear_system}) 
using ridge regression as follows:
\begin{eqnarray}\label{reg_linear_system}
\left( (A^{(\ell )})+\lambda E \right) c^{(\ell +1)}=B^{(\ell )},
\end{eqnarray}
where $ \lambda $ is the regularization parameter, which is chosen empirically 
as $1 \times 10^{-5}$ and $ E $ is the identity matrix. 
\end{remark}

Secondly, when the coefficient parameters $c^{(\ell+1)}$ are fixed, 
the neural network parameters $\theta^{(\ell+1)}$ can be updated by 
the optimization steps included, such as Adam or L-BFGS for the 
loss function $\mathcal{L}^{(\ell+1)}(c^{(\ell + 1)},\theta ^{(\ell)})$ 
defined in (\ref{loss_Residual}) or (\ref{loss_Ritz}). 
Thus, the TNN-based machine learning method for the high-dimensional periodic problem 
can be defined in Algorithm \ref{Algorithm_1}. 
This type of parameter optimization significantly enhances the efficiency of the training process 
and effectively improves the accuracy of the TNN method.
\begin{breakablealgorithm}
\caption{Adaptive TNN subspace method for (\ref{Projection_Problem})}\label{Algorithm_1}
\begin{enumerate}
\item Initialization:
Build the initial TNN $\Psi(\boldsymbol{y};c^{(0)},\theta^{(0)})$ defined in 
Section \ref{Subsection_TNN}, set the loss function $\mathcal{L}(\Psi (\boldsymbol{y}; c,\theta))$, 
the maximum number of training steps $M$, 
learning rate $\gamma$, and the iteration counter $\ell = 0$.
\item Define the subspace $\mathcal V_{p}^{(\ell)}$ as follows:
\begin{eqnarray*}
\mathcal V_{p}^{(\ell )}:=\mathrm{span}\left\{ \varphi_j(\boldsymbol{y};\theta ^{(\ell )}), 
j=1,\cdots ,p \right\} .
\end{eqnarray*}
Assemble the stiffness matrix $A^{(\ell)} \in \mathbb{R}^{p \times p}$ and right-hand side term
$B^{(\ell)} \in \mathbb{R}^{p}$ on $\mathcal V_{p}^{(\ell)}$:
\begin{eqnarray*}
A_{m,n}^{(\ell )}=a(\varphi _{n}^{(\ell )},\varphi _{m}^{(\ell)}), 
\ \ B_{m}^{(\ell )}=(F,\varphi _{m}^{(\ell )})\ \  1\le m,n\le p. 
\end{eqnarray*}
\item Solve the following linear system to determine the coefficient vector $c \in \mathbb{R}^{p}$
\begin{eqnarray*}
A^{(\ell)}c = B^{(\ell)}.
\end{eqnarray*}
Update $\Psi(\boldsymbol{x};c^{(\ell+1)},\theta^{(\ell)})$ 
with the coefficient vector: $c^{(\ell+1)} = c$.
\item Update the network parameters from $\theta^{(\ell)}$ to $\theta^{(\ell+1)}$, by optimizing the loss 
function $\mathcal{L}^{(\ell+1)}(c^{(\ell + 1)},\theta ^{(\ell)})$ 
defined by (\ref{loss_Residual}) or (\ref{loss_Ritz}) through gradient-based training steps.
\item Iteration: set $\ell = \ell + 1$. If $\ell < M$, go to Step 2. Otherwise, terminate.
\end{enumerate}
\end{breakablealgorithm}
In the third step, based on the high-dimensional function 
$\Psi(\boldsymbol{y}; c^{(\ell+1)}, \theta^{(\ell)})$, 
we can build the neural network function 
$\Psi(P^\top  \boldsymbol{x};c^{(\ell+1)}, \theta^{(\ell)})$
as the approximation to the solution 
$u(\boldsymbol{x})\in \overline H_{QP}^1\left(\mathbb R^d\right)$ 
for the quasiperiodic elliptic problem (\ref{Elliptic_Equation}).  
From Theorem \ref{Theorem_Error_Estimate}, 
we can easily obtain the following optimal approximation property 
\begin{eqnarray*}
\| U- \Psi(\boldsymbol{y}; c^{(\ell+1)}, \theta^{(\ell)})\|_{\overline{H}_{P,0}^1} 
\leq \frac{\alpha_1}{\alpha_0} \inf_{V_p\in \mathcal V_p^{(\ell)}}\|  U-V_p  \|_{\overline{H}_{P,0}^1}.
\end{eqnarray*}

\subsection{Quadrature scheme for TNN loss function}\label{Section_Integration}
In this subsection, we introduce the quadrature scheme for computing the integrations 
in the loss function (\ref{loss_Residual}) or (\ref{loss_Ritz}), 
which involves TNN functions and tensor-product-type coefficient. 
As for the general case, please refer to \cite{WangJinXie}, 
where the method to compute the numerical 
integrations of polynomial composite functions of TNN and their derivatives are designed. 
In the loss function (\ref{loss_Ritz}) 
\begin{equation*}
\begin{aligned}
&L(\theta) = \left(\int_{\mathbb T^n}\left(\frac{1}{2}
\sum_{i=1}^n\sum_{j=1}^n\sum_{\ell=1}^n A(\boldsymbol{y})P_{i,j}P_{i,\ell}
\frac{\partial \Psi(\boldsymbol{y};c,\Theta) }{\partial y_j}
\frac{\partial \Psi(\boldsymbol{y}; c, \Theta) }{\partial y_\ell}
-F(\boldsymbol{y})\Psi(\boldsymbol{y};c, \Theta)\right)d\boldsymbol{y}\right)^{\frac{1}{2}},  
\end{aligned}
\end{equation*}
we need to compute a $n$-dimensional integration. 
With the tensor-product-type structure of TNN and the multiscale coefficients, 
we are able to compute the integration with polynomial scale computational 
complexity and high accuracy by transforming the high-dimensional integration 
into the multiplication of $n$ one-dimensional integrations. 
As an example, we consider the case of $d=2$ and two $\mathbb Q$-independent periods, i.e., $n=4$ and   
and compute one of the terms of the $L^2$ square expansion, i.e., 
\begin{equation}\label{integration_square_expansion}
\frac{1}{2}\int_{\mathbb T^n}
\sum_{j=1}^4 \sum_{\ell=1}^4 A(\boldsymbol{y})P_{i,j}P_{i,\ell}
\frac{\partial \Psi(\boldsymbol{y}; c, \Theta) }{\partial y_j}
\frac{\partial \Psi(\boldsymbol{y}; c, \Theta) }{\partial y_\ell}d\boldsymbol{y}. 
\end{equation}
Furthermore, we assume that the coefficient $A$ has  following tensor-product structure, 
\begin{equation*}
A(\mathbf{y}) = \sum_{e=1}^{p_a} \alpha_e\prod_{i=1}^4\varphi_{i,e}(y_i).
\end{equation*}
This assumption is quite common for the $n$-dimensional periodic problems. 
Recall that by (\ref{hat-psi}) and (\ref{def_TNN_normed}), $\widehat{\Psi}_j$ is also tensor-product-type 
\begin{equation*}
\begin{aligned}
\widehat{\Psi}_j := \sum_{\ell=1}^p c_{\ell} \prod_{i=1}^4 \phi_{i,\ell}(y_i).
\end{aligned}
\end{equation*}
Hence, (\ref{integration_square_expansion}) can be computed by
\begin{eqnarray}\label{intergration_square_tensor_form}
&&\frac{1}{2}\int_{\mathbb T^4}
\sum_{j=1}^n\sum_{\ell=1}^n A(\boldsymbol{y})P_{i,j}P_{i,\ell}
\frac{\partial \Psi(\boldsymbol{y}, \Theta) }{\partial y_j}
\frac{\partial \Psi(\boldsymbol{y}, \Theta) }{\partial y_\ell}d\boldsymbol{y} \nonumber\\
&=& \frac{1}{2}\sum_{i=1}^n\sum_{j=1}^n\sum_{\ell=1}^nP_{i,j}P_{i,\ell}
\int_{\mathbb T^4}\left(\sum_{e=1}^{p} \alpha_{e} 
\prod_{k=1}^4 \varphi_{k,e}(y_k) \cdot 
\sum_{m=1}^p c_m \prod_{n \neq j} \phi_{n,m}(y_n)
\frac{\partial \phi_{j,m}(y_j)}{\partial y_j}\right.\nonumber\\
&&\times \left.\sum_{r=1}^pc_r\prod_{s\neq \ell}\phi_{s,r}(y_s)
\frac{\partial\phi_{\ell,r}(y_\ell)}{\partial y_\ell} \right)dy_1 dy_2 dy_3 dy_4\nonumber\\
&=& \frac{1}{2}\sum_{i=1}^n\sum_{j=1}^n\sum_{\ell=1}^n\sum_{e=1}^{p_\alpha}\sum_{m=1}^p\sum_{r=1}^p P_{i,j}P_{i,\ell}
\alpha_ec_mc_r \int_{\mathbb T^4}\prod_{k=1}^4 \varphi_{k,e}(y_k) \nonumber\\
&&\times  \prod_{n \neq j} \phi_{n,m}(y_n)\frac{\partial \phi_{j,m}(y_j)}{\partial y_j}\cdot\prod_{s\neq \ell}\phi_{s,r}(y_s)
\frac{\partial\phi_{\ell,r}(y_\ell)}{\partial y_\ell}dy_1 dy_2 dy_3 dy_4.
\end{eqnarray}
The expansion (\ref{intergration_square_tensor_form}) gives the hint to 
design the efficient numerical scheme to compute the high-dimensional integration (\ref{integration_square_expansion}). Each integration term in 
(\ref{intergration_square_tensor_form}) can be transformed to the one-dimensional 
integration which can be calculated with high accuracy with the classical quadrature 
schemes. For example, the integration term in (\ref{integration_square_expansion}) 
for $j=2$ and $\ell=3$ can be decomposed to the one-dimensional integration as follows 
\begin{eqnarray*}
&&\int_{\mathbb T^4}\prod_{k=1}^4 \varphi_{k,e}(y_k)\cdot  \prod_{n \neq 2} 
\phi_{n,m}(y_n)\frac{\partial \phi_{2,m}(y_2)}{\partial y_2}\cdot\prod_{s\neq 3}\phi_{s,r}(y_s)
\frac{\partial\phi_{3,r}(y_3)}{\partial y_3}dy_1 dy_2 dy_3 dy_4\\
&&=\int_{\mathbb T}\varphi_{1,e}(y_1)\phi_{1,m}(y_1)\phi_{1,r}(y_1)dy_1
\int_{\mathbb T}\varphi_{2,e}(y_2)\frac{\partial \phi_{2,m}(y_2)}{\partial y_2}
\phi_{2,r}(y_2)dy_2\\
&&\quad\times\int_{\mathbb T} \varphi_{3,e}(y_3)\phi_{3,m}(y_3)
\frac{\phi_{3,r}(y_3)}{\partial y_3}dy_3 
\int_{\mathbb T}\varphi_{4,e}(y_4)\phi_{4,m}(y_4)\phi_{4,r}(y_4)dy_4. 
\end{eqnarray*}
Therefore, the computational work of the high-dimensional integration 
of (\ref{integration_square_expansion}) can be reduced to the polynomial scale of dimension $n$. 
Following the same procedure, we are able to compute all the terms of the $L^2$ 
square expansion of loss function (\ref{loss_Residual}) and (\ref{loss_Ritz}) 
with high accuracy and acceptable computational complexity. 
Solving the minimization problems using optimization algorithms to find the optimized parameters 
of the neural networks becomes much more reasonable and numerically stable 
when the accuracy of computing loss functions is guaranteed, 
which is the initial motivation for designing the TNN structure.

\section{Numerical examples}\label{Section_Numerical_Examples}
In this section, we provide several examples to validate the efficiency and accuracy of 
the proposed TNN-based machine learning method for solving the 
quasiperiodic problems.

The incommensurate coefficient $\alpha$ is constructed according to Lindemann-Weierstrass Theorem \cite[Thoerem 1.4]{Lindemann}, 
so that the resulting projection matrix $P$ has $\mathbb Q$-independent column vectors, 
and the corresponding quasiperiodic problem is thereby determined.

In all numerical experiments in this section, we use the TNN $\Psi (\boldsymbol{y};c,\Theta)$ 
defined in \eqref{def_TNN_normed}, to approximate the solution of the high-dimensional periodic problem, 
and let $\Phi (\boldsymbol{x};c,\Theta):=\Psi (P^\top \boldsymbol{x};c,\Theta)$ 
the approximation to the low-dimensional quasiperiodic problem obtained by applying 
the pullback mapping $\mathcal J_P$ to $\Psi (\boldsymbol{y};c,\Theta)$.
The following two notations about the approximate solution 
$\Psi (\boldsymbol{y};c,\Theta)$ and the exact solution $U$ 
are used to measure the convergence behavior and accuracy of the examples in this section:
\begin{itemize}
\item Relative $L^2$ norm error 
$$e_{L^2}:=\frac{\left\|U-\Psi (\boldsymbol{y};c,\Theta)\right\|_{L^2}}
{\left\|u\right\|_{L^2}},$$ 
Here $\|\cdot\|_{L^2}$ denotes the $L^2(\mathbb T^n)$ norm.
\item Relative $L^2$ error at the test points 
$$e_{\rm test}:=\frac{\sqrt{\sum_{k=1}^K{( \Psi (\boldsymbol{x}^k;c,\Theta) -u( \boldsymbol{x}^k))^2}}}
{\sqrt{\sum_{k=1}^K{(u( \boldsymbol{x}^k))^2}}},$$
where the selected test points $\{(\boldsymbol{x}^k)\}$ are $K=5000$ randomly sampled points drawn uniformly from $\mathbb T^n$. 
\end{itemize}

For all examples, each FNN has three hidden layers with 50 neurons in each layer and uses
sine function as the activation function.
The rank parameter for the TNN is set to be $p = 20$.
Moreover, the TNN is constructed so that it automatically satisfies the periodic boundary conditions when approximating $U(\boldsymbol{y})$. 

During computing the loss function, the interval $\mathbb T$ 
is subdivided into $100$ subintervals with $4$ Gauss points selected within each subinterval for building the quadrature scheme. 
During the optimization of the TNN, the learning rate was set to 0.003 for the Adam optimizer and to 1 for the LBFGS optimizer.

All the experiments were done on an NVIDIA RTX A6000 GPU with 48 GB of memory. 

\subsection{Example 1}
In the first example, we solve the quasiperiodic elliptic problem (\ref{Elliptic_Equation}) 
with following coefficient  
\[
\alpha = \cos(2\pi x) + \cos(2\pi \sqrt{2} x) + 6.
\]
The right hand side term $f(\boldsymbol{x})$ is chosen such that the exact solution is 
\[
u = \sin(2\pi x) + \sin(2\pi \sqrt{2} x).
\]
In order to use the projection method, we take the following projection matrix 
\[
P = \begin{bmatrix} 
1 & \sqrt{2} 
\end{bmatrix}.
\]
Then $\mathbb T = [0, 1]$ and the corresponding two dimensional function $A$ and $U$ are defined as follows 
\[
A(\boldsymbol{y}) = \cos(2\pi y_1) + \cos(2\pi\sqrt{2} y_2) + 6, \ \ \ \ 
U(\boldsymbol{y}) = \sin(2\pi y_1) + \sin(2\pi\sqrt{2} y_2).
\]

First, we use the loss function defined in (\ref{loss_Ritz}) 
for Step 4 of Algorithm \ref{Algorithm_1}, training the TNN for 1,000 iterations with Adam and subsequently for 500 iterations with LBFGS.
The relative $L^2$ norm error of the approximation $\Phi(\boldsymbol{x};c,\Theta)$ 
to the exact solution $u(\boldsymbol{x})$ is \(1.0513 \times 10^{-5}\).  
The corresponding numerical results are presented in Figure \ref{fig_errors_ex1_ritz}. 
\begin{figure}[ht]
\centering
\includegraphics[width=6cm,height=4.5cm]{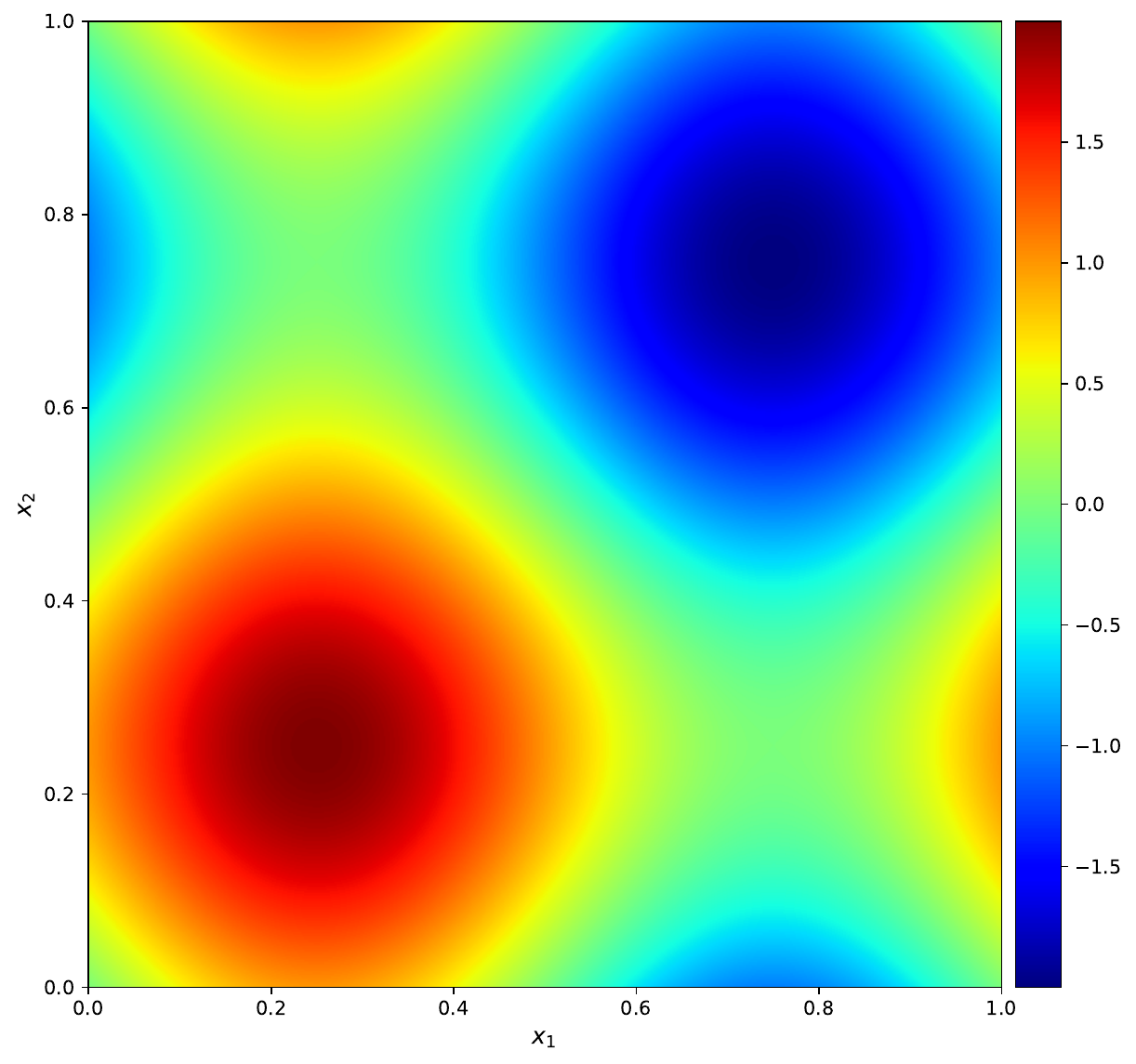}
\includegraphics[width=6cm,height=4.5cm]{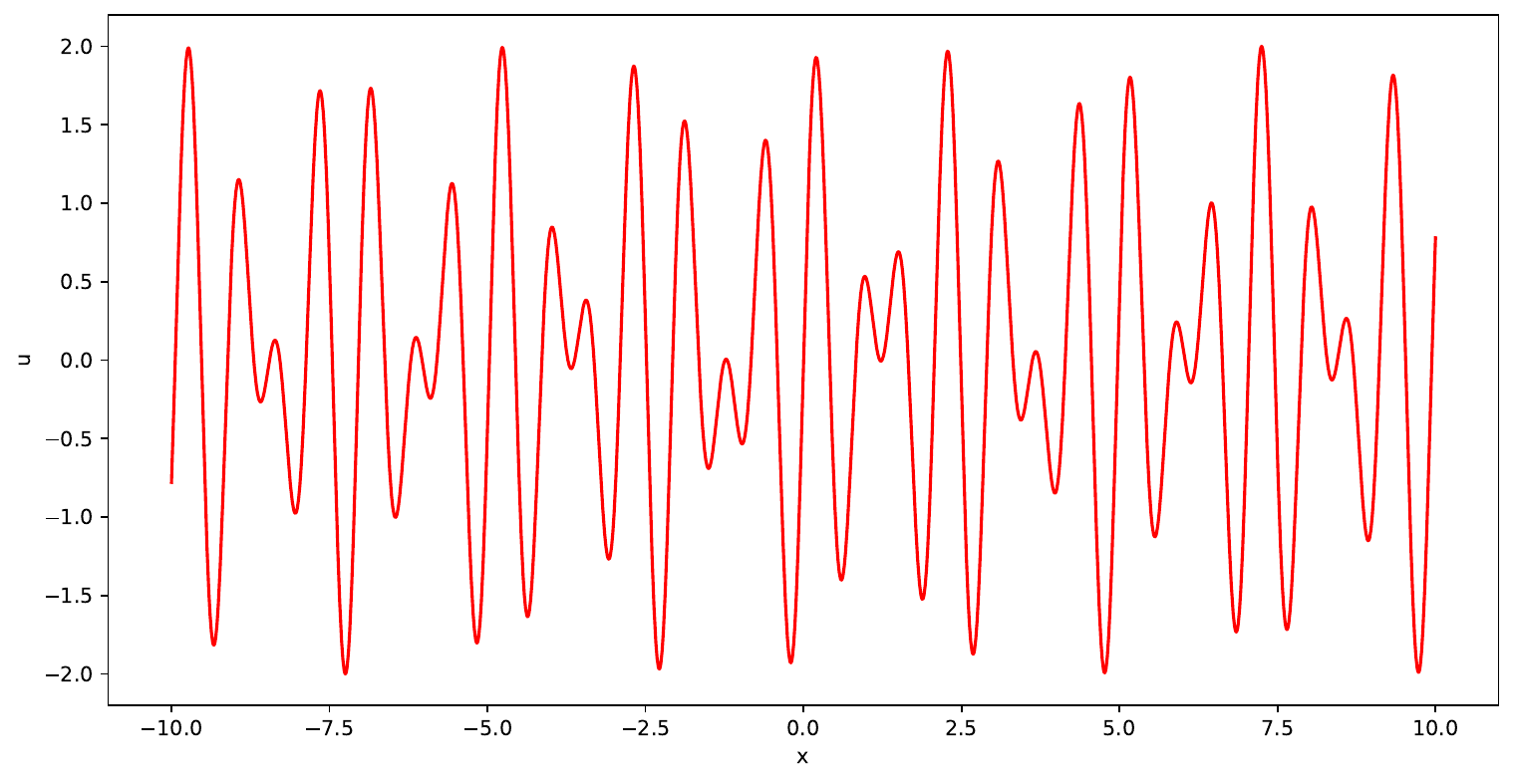}\\
\includegraphics[width=6cm,height=4.5cm]{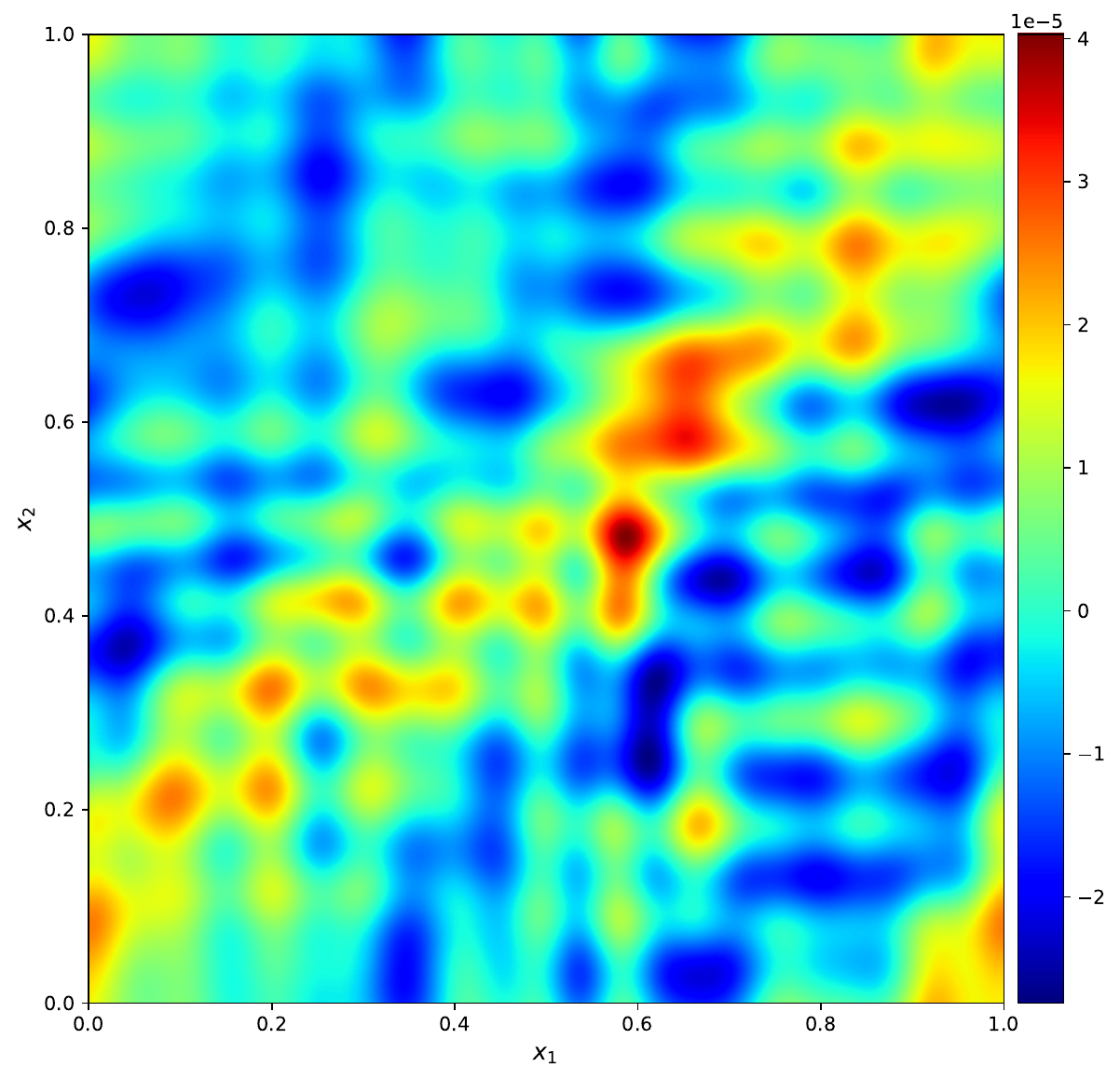}
\includegraphics[width=6cm,height=4.5cm]{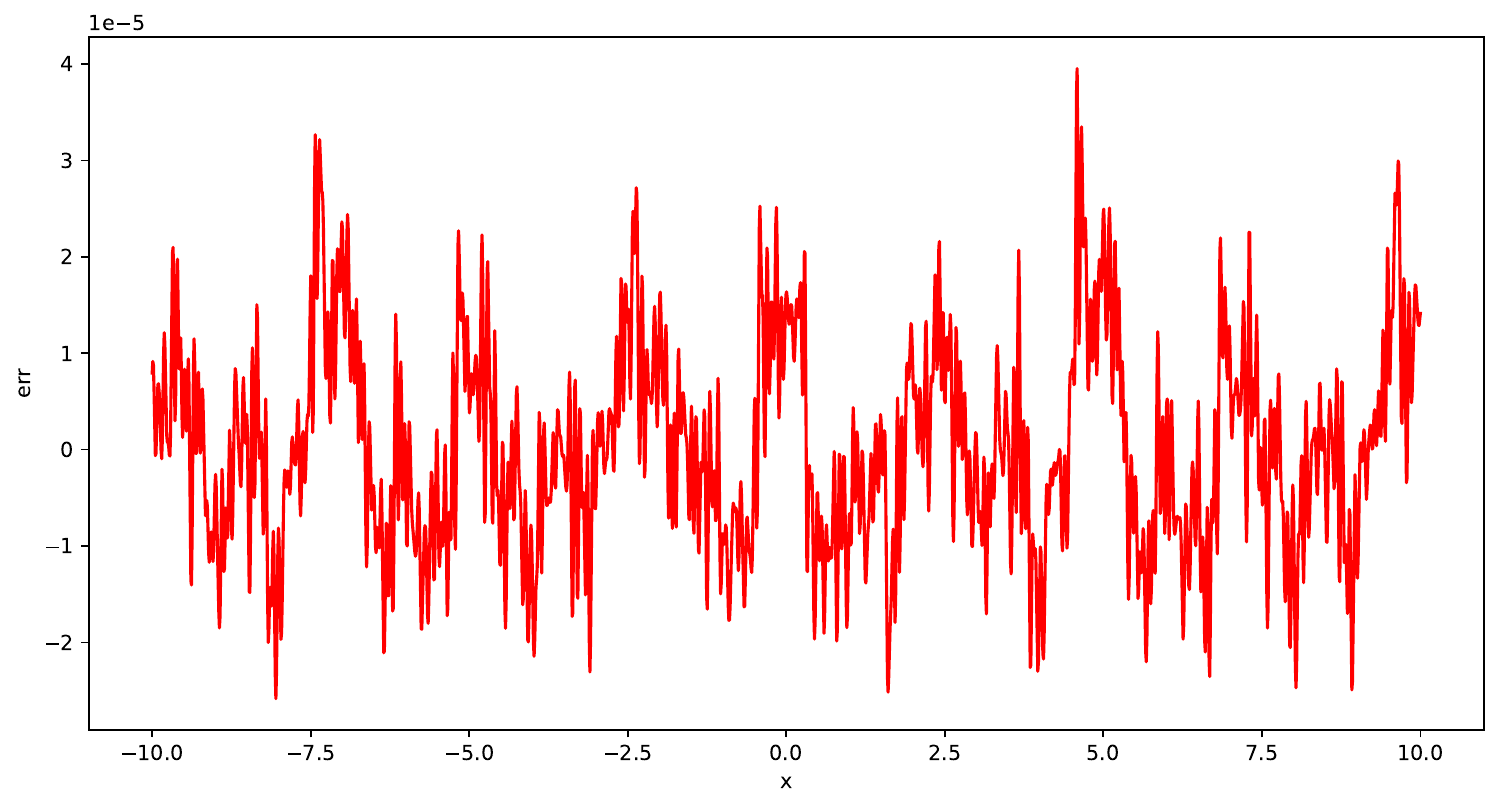}
\caption{The numerical results for Example 1 
with loss function (\ref{loss_Ritz}), top-left shows the 
TNN approximation $\Psi(\boldsymbol y;c,\Theta)$ to $U(\boldsymbol y)$, 
top-right is the figure of the approximation $\Phi(\boldsymbol{x};c,\Theta)$ to 
the quasiperiodic function $u(\boldsymbol x)$, down-left 
shows the error $U(\boldsymbol y)-\Psi(\boldsymbol y;c, \Theta)$ and 
down-right is the error $u(\boldsymbol x)-\Phi(\boldsymbol{x};c,\Theta)$.}\label{fig_errors_ex1_ritz}
\end{figure}

To further improve the accuracy, then we use the loss function defined in (\ref{loss_Residual}) 
for Step 4 of Algorithm \ref{Algorithm_1}, training the TNN for 500 iterations with LBFGS.
The relative $L^2$ norm error of the approximation $\Phi(\boldsymbol{x};c,\Theta)$ 
to the exact solution $u(\boldsymbol x)$ is \(6.8285 \times 10^{-8}\).  
The corresponding numerical results are presented in Figure \ref{fig_errors_ex1}. 
\begin{figure}[ht]
\centering
\includegraphics[width=6cm,height=4.5cm]{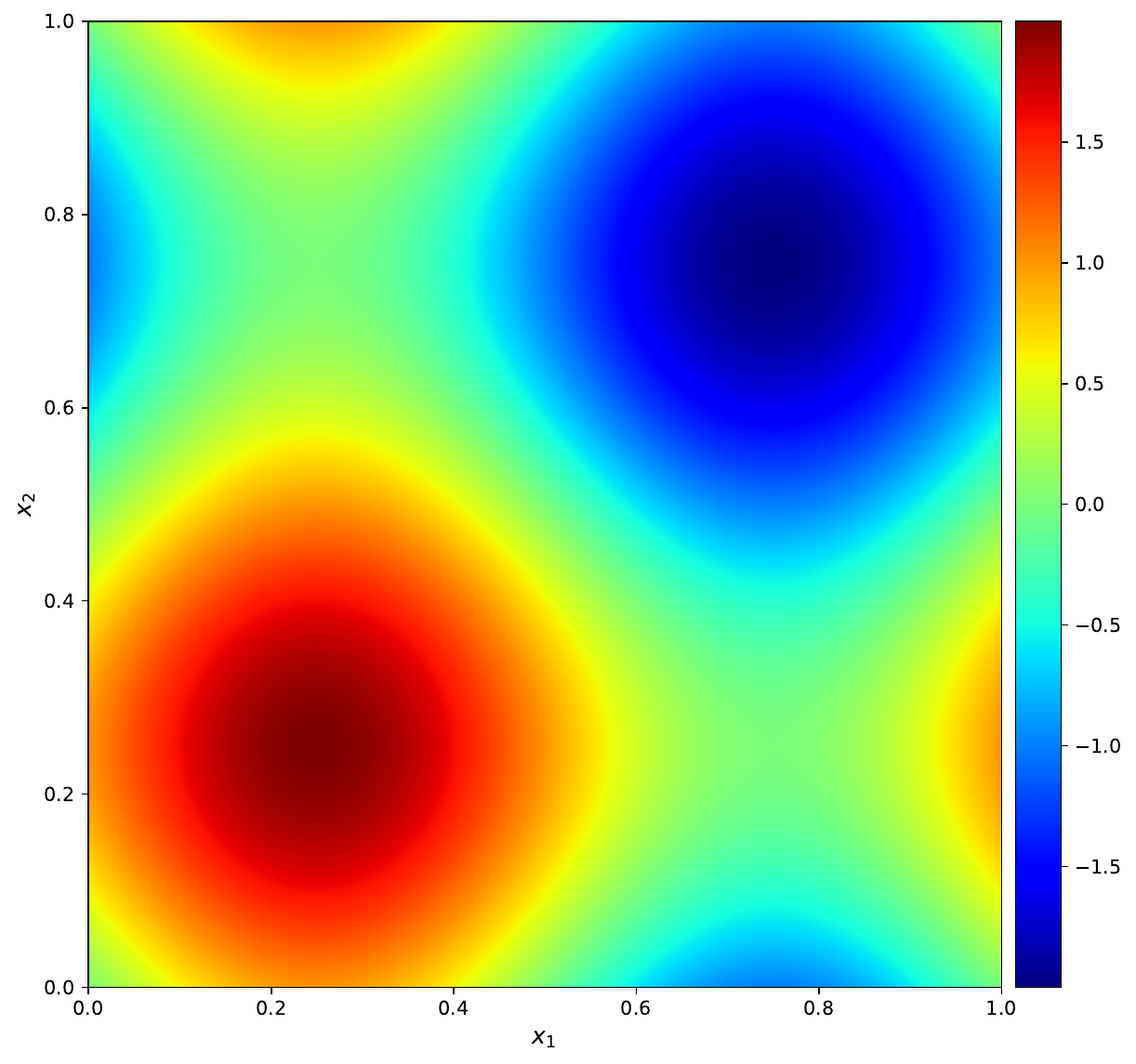}
\includegraphics[width=6cm,height=4.5cm]{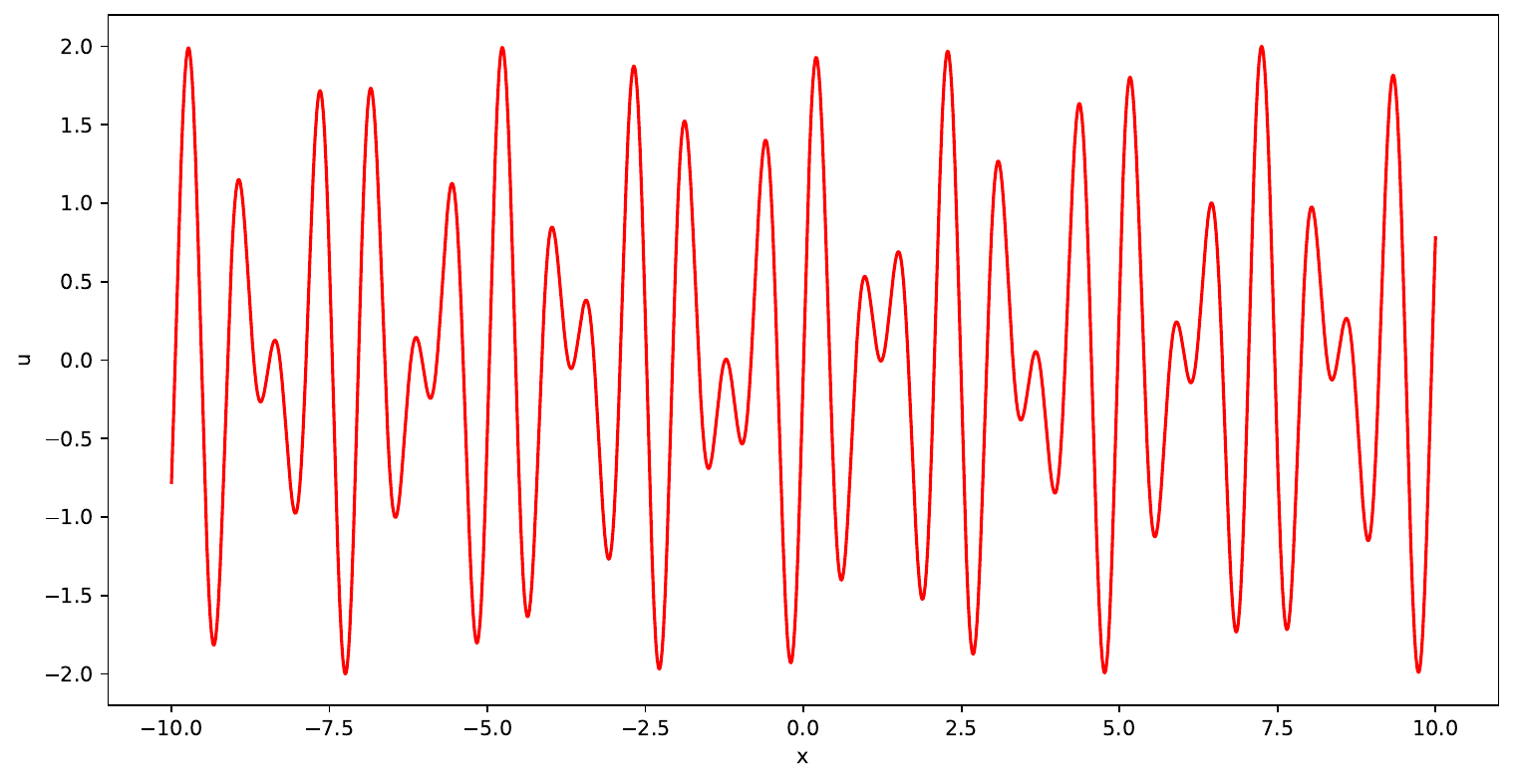}\\
\includegraphics[width=6cm,height=4.5cm]{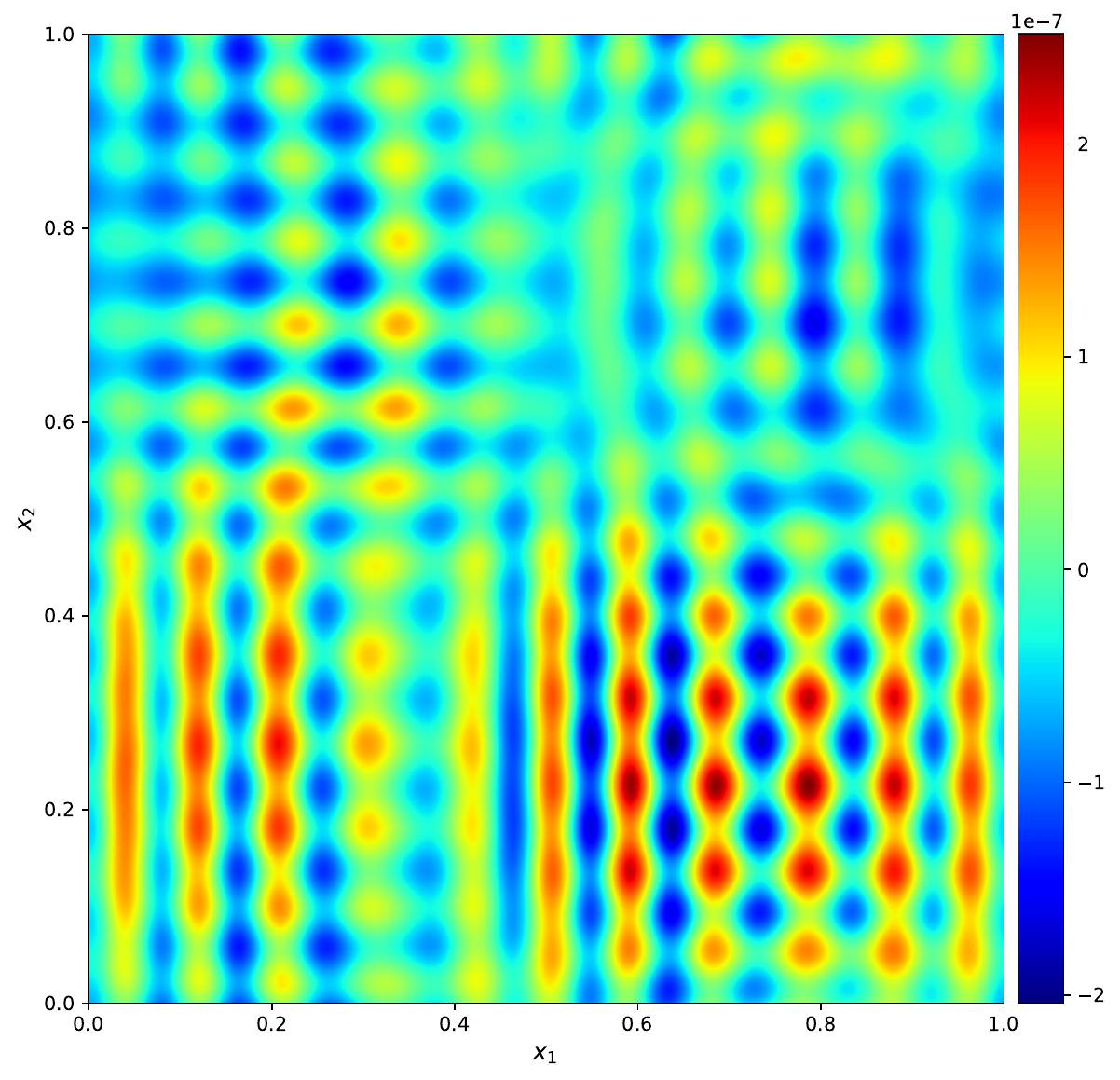}
\includegraphics[width=6cm,height=4.5cm]{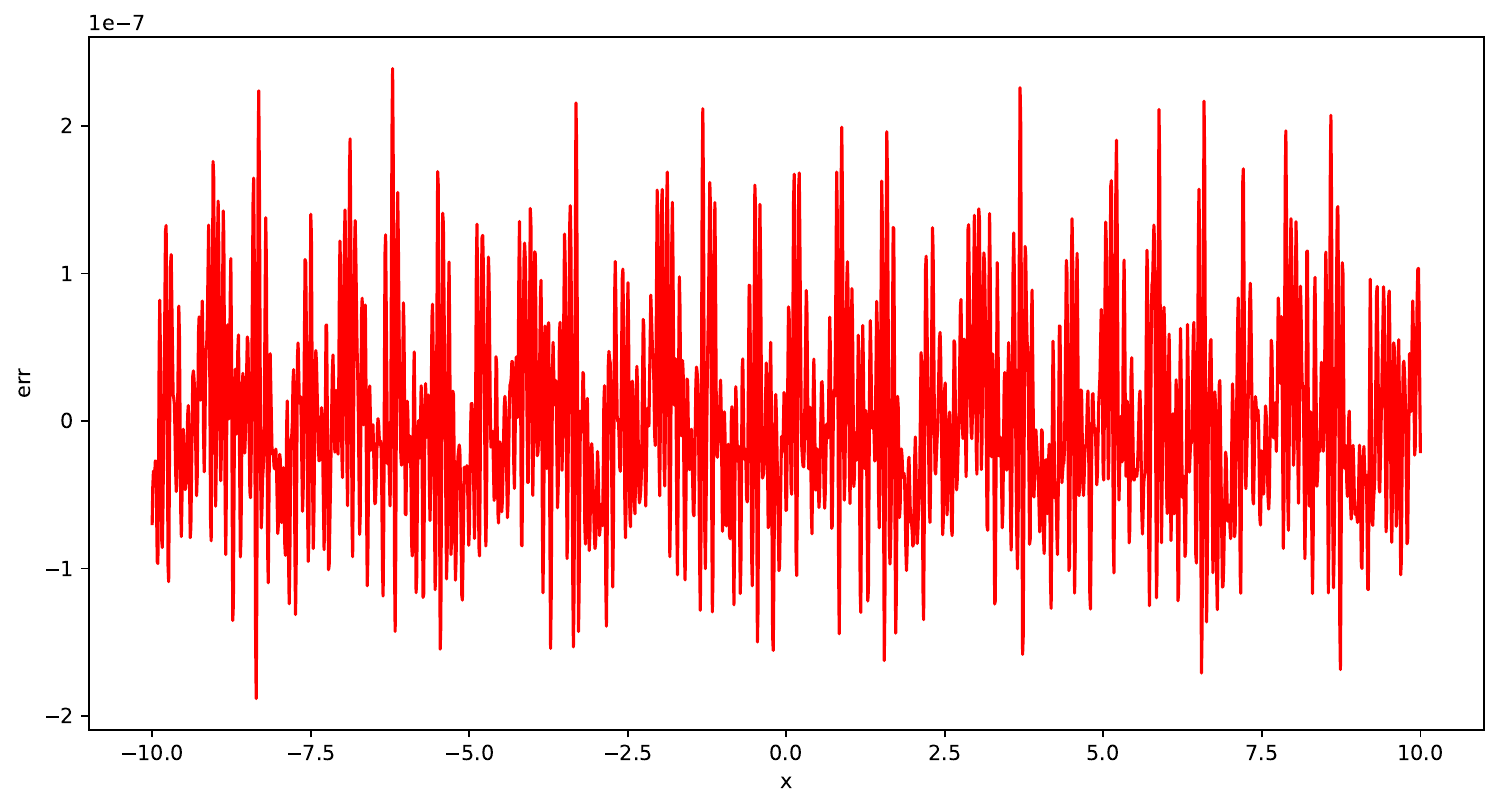}
\caption{The numerical results for Example 1 with loss function (\ref{loss_Residual}), top-left shows the 
TNN approximation $\Psi(\boldsymbol{y};c,\Theta)$ to $U(\boldsymbol{y})$, 
top-right is the figure of the approximation $\Phi(\boldsymbol{x};c,\Theta)$ to 
the quasiperiodic function $u(\boldsymbol{x})$, dow-left 
shows the error $U(\boldsymbol{y})-\Psi(\boldsymbol{y};c, \Theta)$ and 
down-right is the error $u(\boldsymbol{x})-\Phi(\boldsymbol{x};c,\Theta)$.}\label{fig_errors_ex1}
\end{figure}

Figures \ref{fig_errors_ex1_ritz} and \ref{fig_errors_ex1} both shows 
the quasiperiodic property of the solution. 
Based on the numerical experiments, we can observe that employing the residual type of loss function (\ref{loss_Residual}) effective in further reducing the error. Hence, first applying the Ritz type of 
loss function (\ref{loss_Ritz}) and then the residual type of loss function (\ref{loss_Residual}) is a recommended approach for achieving a high-accuracy numerical solution with favorable computational speed.
However, computing the residual loss function (\ref{loss_Residual}) requires more computational effort and memory. For this reason, when the problem include 
many $\mathbb Q$-independent periods, we recommend to only use the Ritz type of 
loss function (\ref{loss_Ritz}) for Step 4 of Algorithm \ref{Algorithm_1}.

\subsection{Example 2}
The second example is concerned with solving the quasiperioric elliptic 
problem (\ref{Elliptic_Equation}) with the following coefficient
\[
\alpha(x) = \cos(2\pi x) + \cos(2\pi^2 x) + 6.
\]
The source term $f(\boldsymbol{x})$ is chosen such that the exact solution is 
\[
u(x) = \sin(2\pi x) + \sin(2\pi^2 x).
\]
In order to use the projection method, we take the following projection matrix
\[
P = \begin{bmatrix} 1 & \pi \end{bmatrix}.
\]
Then the two dimensional coefficient $A(\boldsymbol y)$ 
and periodic function $U(\boldsymbol y)$ can be defined as follows 
\[
A = \cos(2\pi y_1) + \cos(2\pi y_2) + 6,\ \ \ 
U = \sin(2\pi y_1) + \sin(2\pi y_2).
\]
First, we use the loss function defined in (\ref{loss_Ritz}) 
for Step 4 of Algorithm \ref{Algorithm_1}, training the TNN for 1,000 iterations with Adam and subsequently for 500 iterations with LBFGS.
The relative $L^2$ norm error of the approximation $\Phi(\boldsymbol{x};c,\Theta)$ 
to the exact solution $u(\boldsymbol{x})$ is \(5.0018 \times 10^{-6}\).  
The corresponding numerical results are presented in Figure \ref{fig_errors_ex2_ritz}. 
\begin{figure}[ht]
\centering
\includegraphics[width=6cm,height=4.5cm]{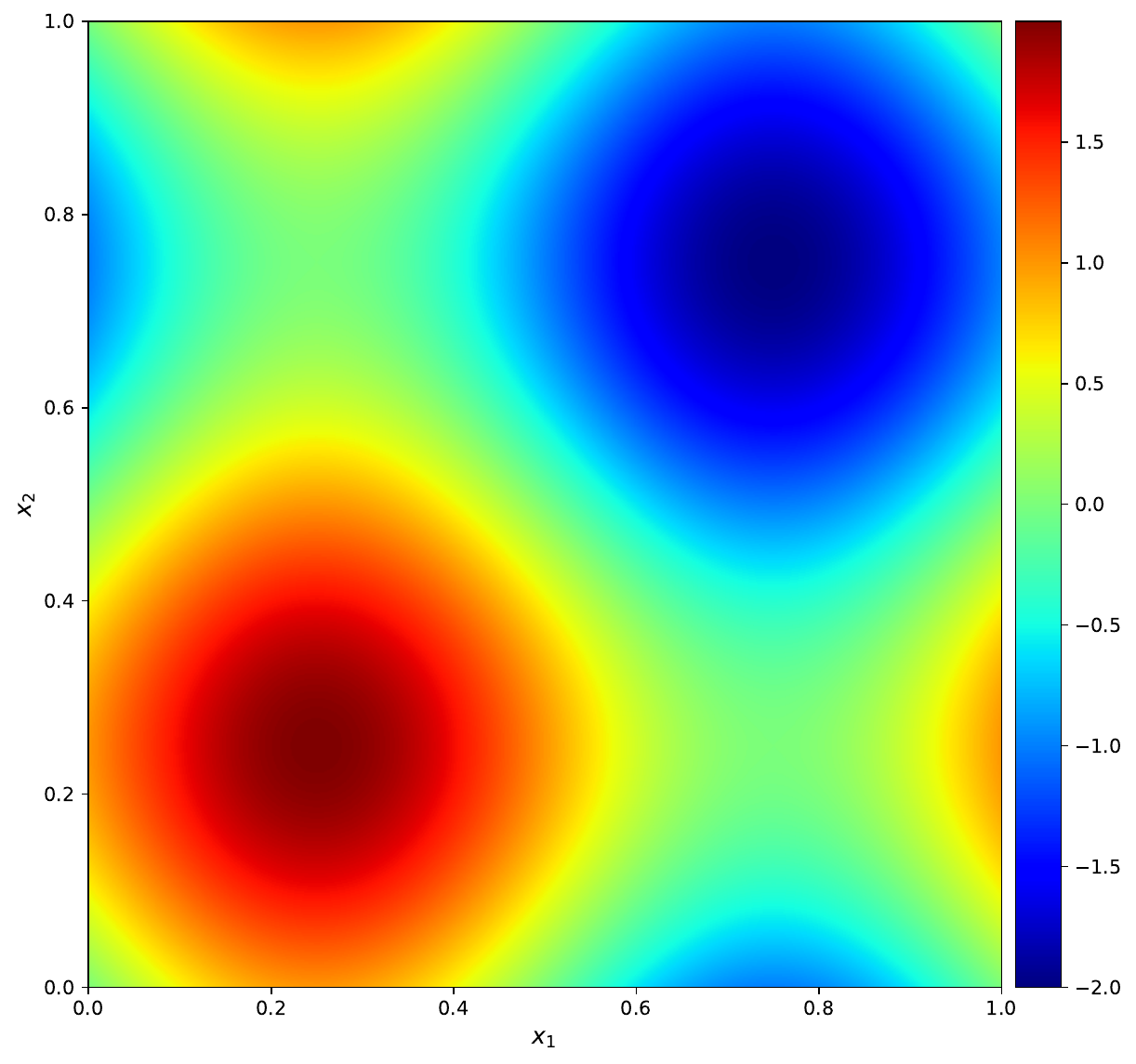}
\includegraphics[width=6cm,height=4.5cm]{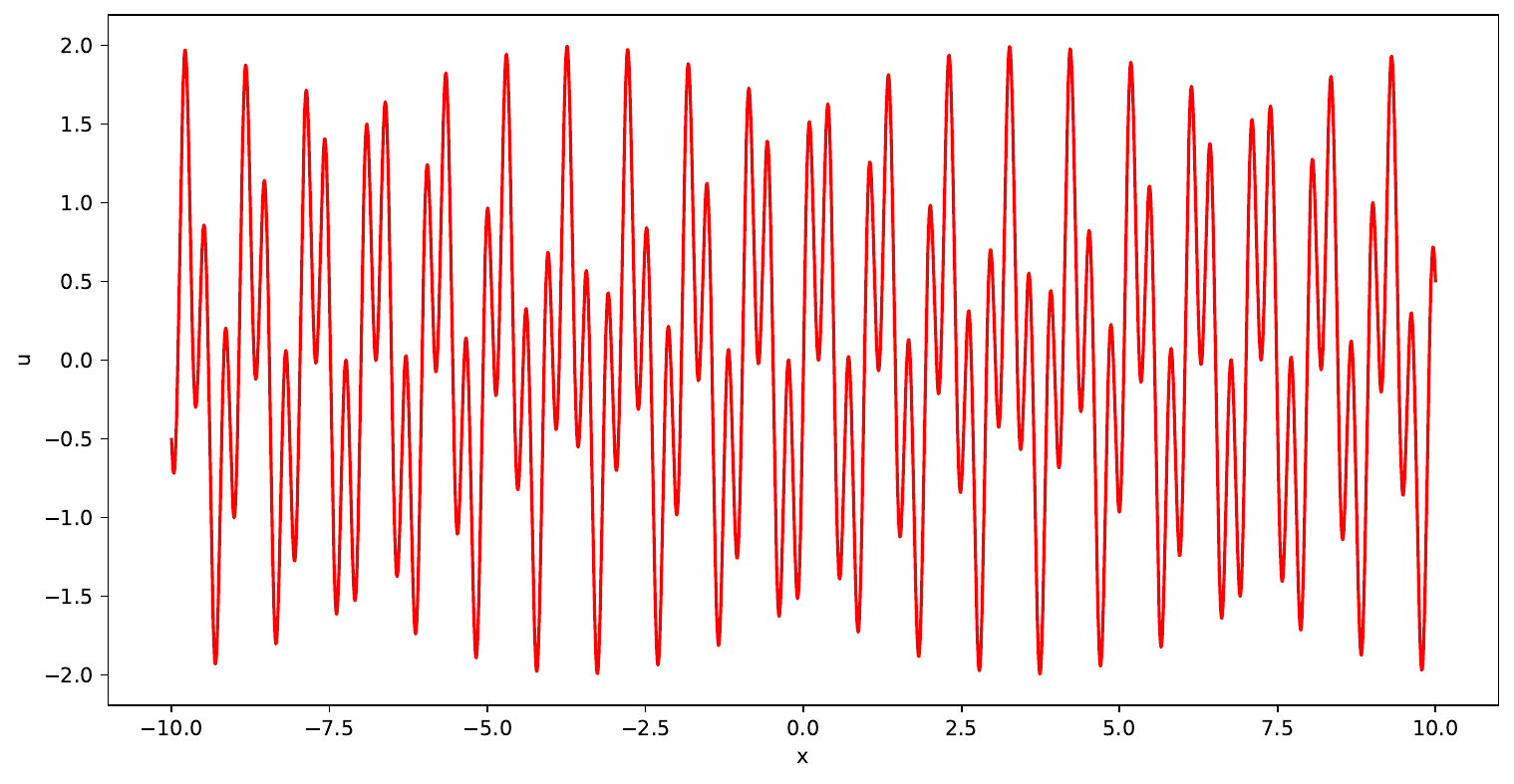}\\
\includegraphics[width=6cm,height=4.5cm]{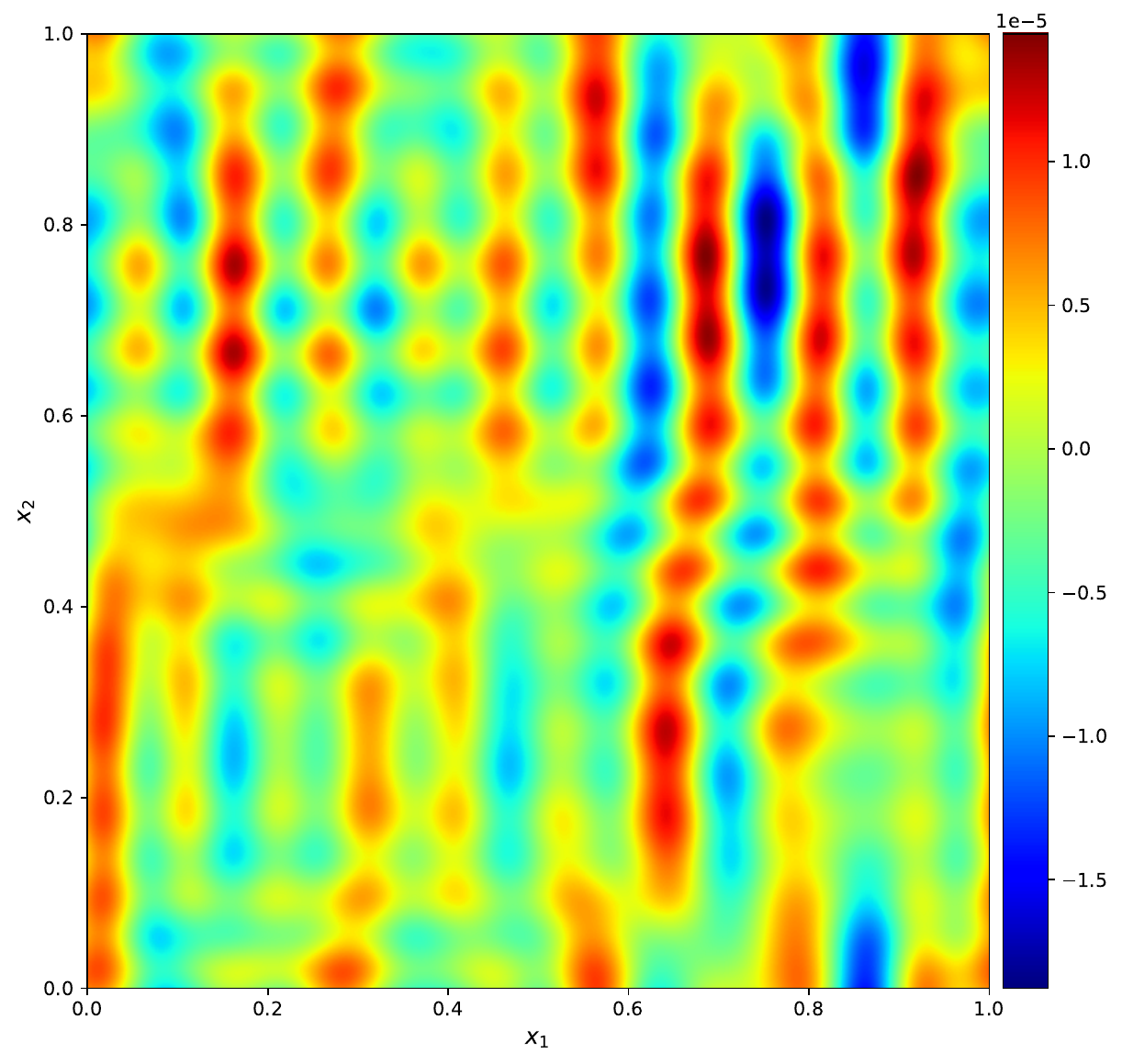}
\includegraphics[width=6cm,height=4.5cm]{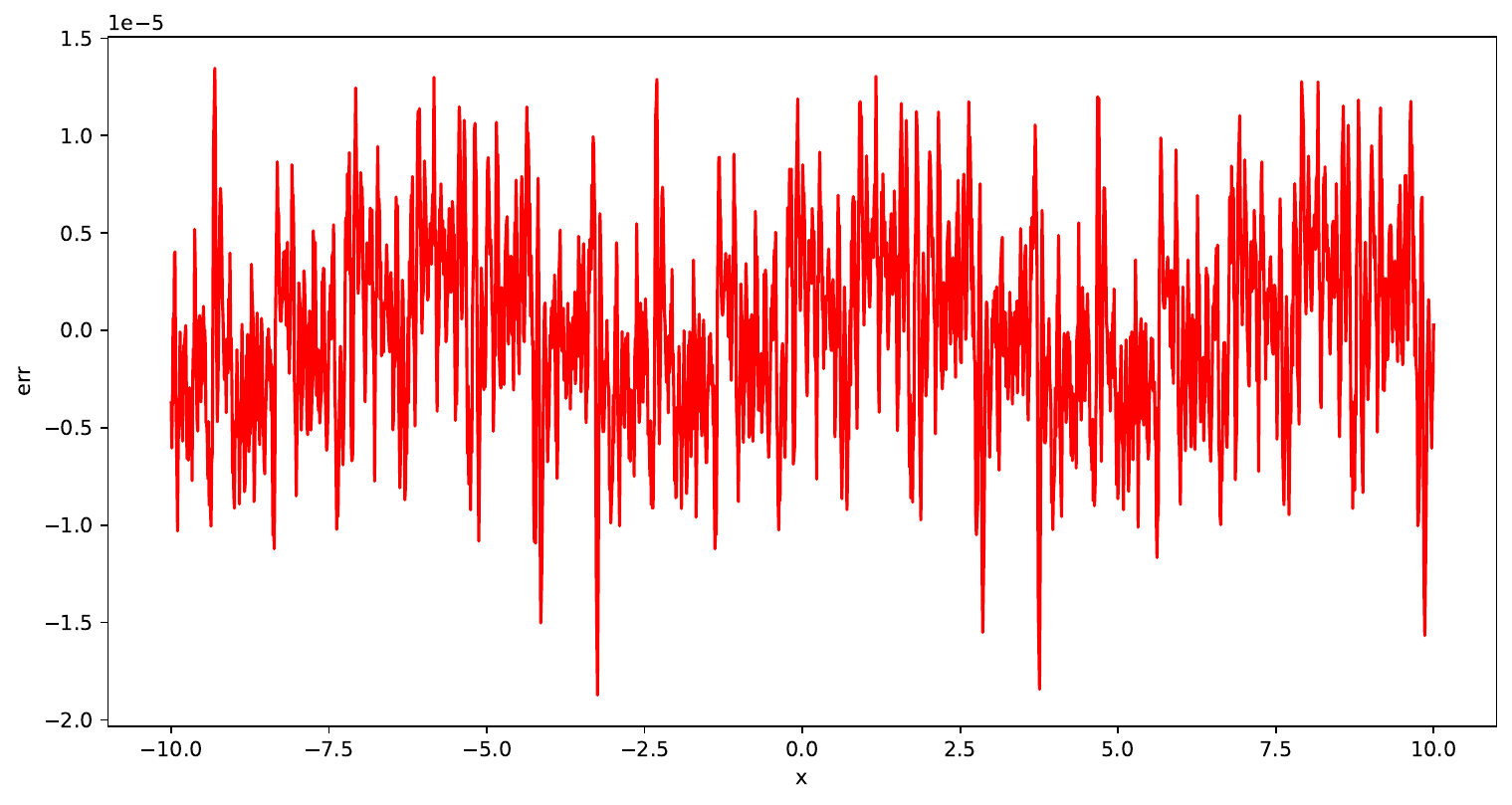}
\caption{The numerical results for Example 2 
with loss function (\ref{loss_Ritz}), top-left shows the 
TNN approximation $\Psi(\boldsymbol{y};c,\Theta)$ to $U(\boldsymbol{y})$, 
top-right is the figure of the approximation $\Phi(\boldsymbol{x};c,\Theta)$ to 
the quasiperiodic function $u(\boldsymbol{x})$, down-left 
shows the error $U(\boldsymbol{y})-\Psi(\boldsymbol{y};c, \Theta)$ and 
down-right is the error $u(\boldsymbol{x})-\Phi(\boldsymbol{x};c,\Theta)$.}\label{fig_errors_ex2_ritz}
\end{figure}

To further improve the accuracy, then we use the loss function defined in (\ref{loss_Residual}) 
for Step 4 of Algorithm \ref{Algorithm_1}, training the TNN for 500 iterations with LBFGS.
The relative $L^2$ norm error of the approximation $\Phi(\boldsymbol{x};c,\Theta)$ 
to the exact solution $u(\boldsymbol{x})$ is \(8.9406 \times 10^{-8}\).  
The corresponding numerical results are presented in Figure \ref{fig_errors_ex2}. 
\begin{figure}[ht]
\centering
\includegraphics[width=6cm,height=4.5cm]{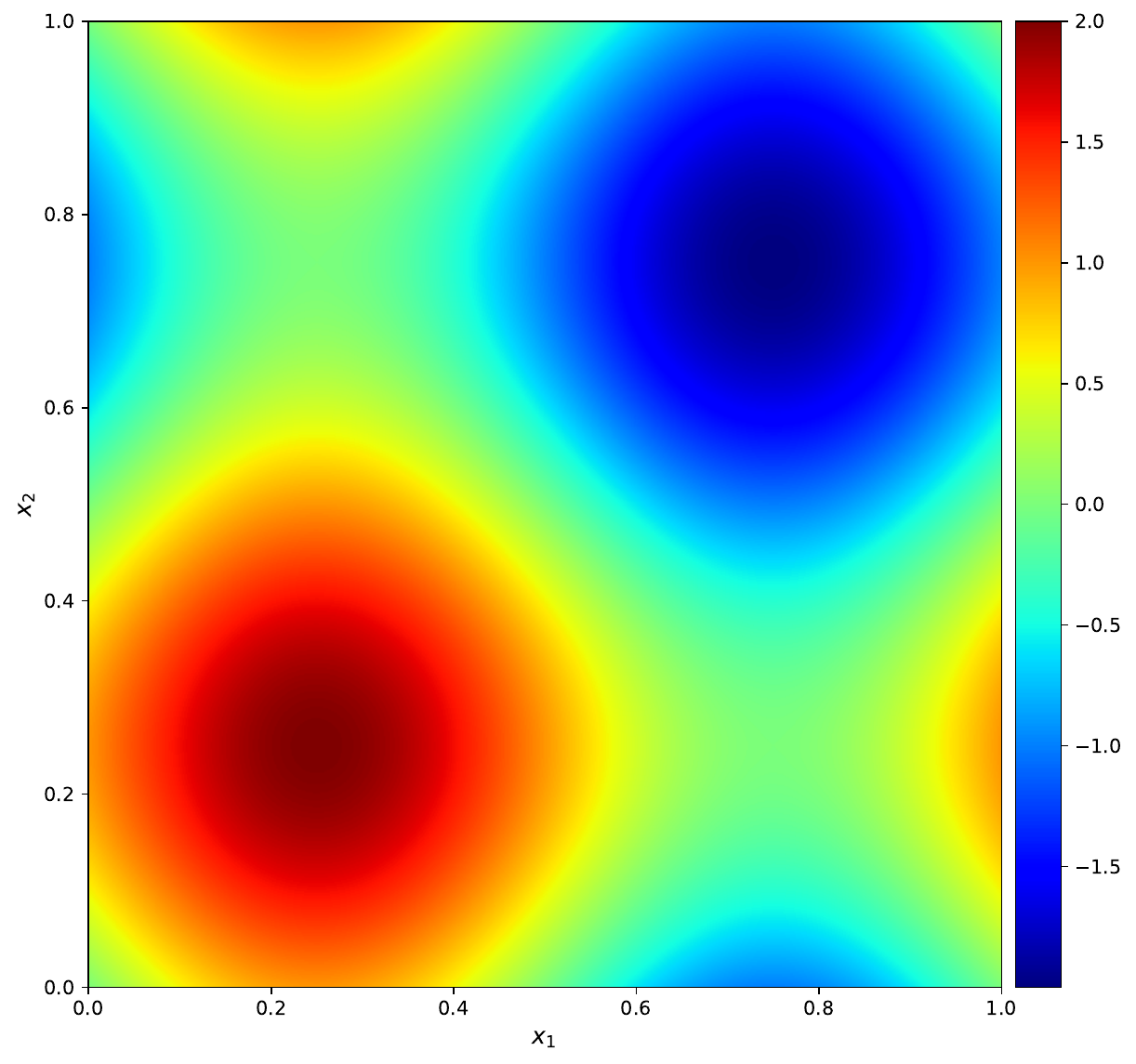}
\includegraphics[width=6cm,height=4.5cm]{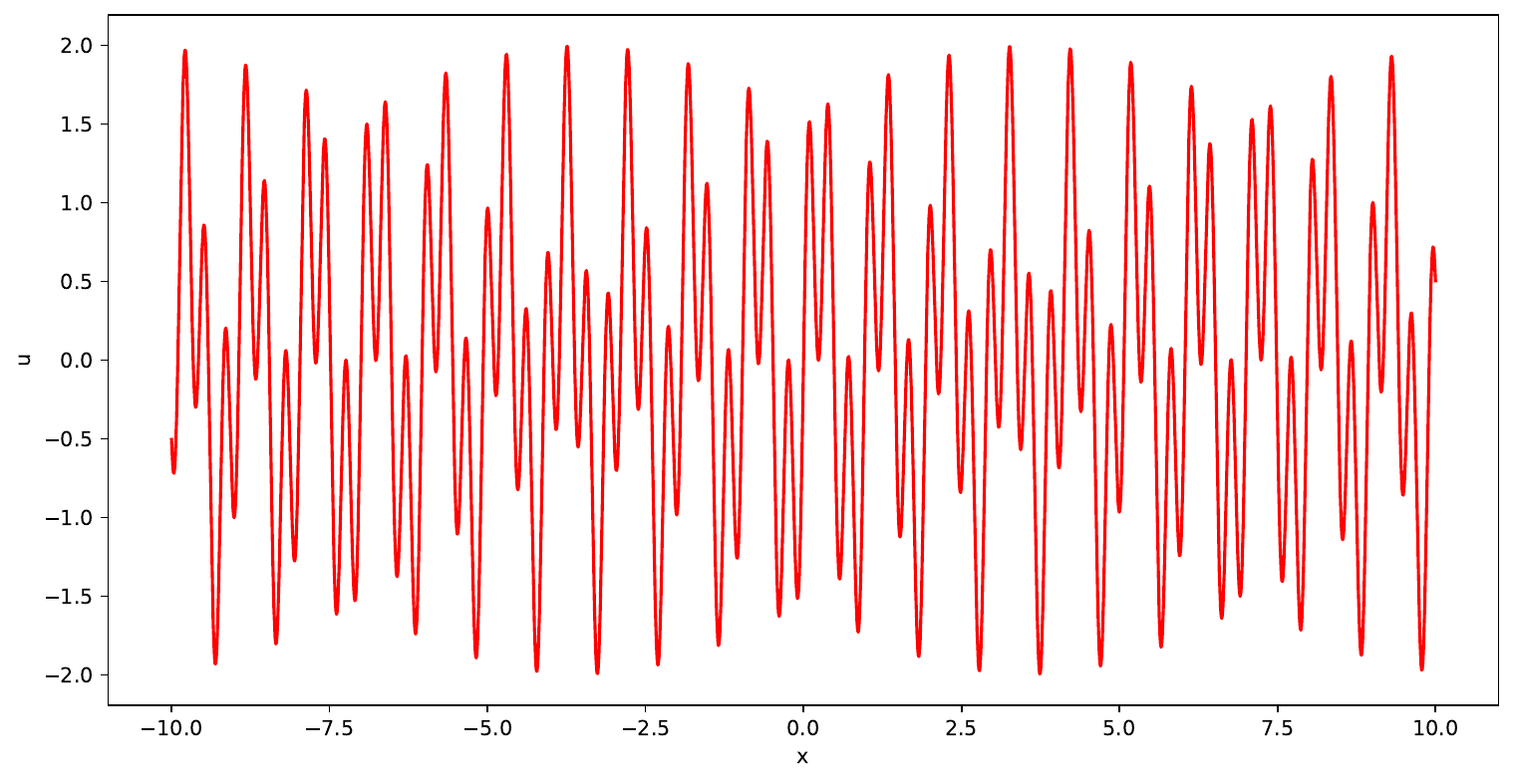}\\
\includegraphics[width=6cm,height=4.5cm]{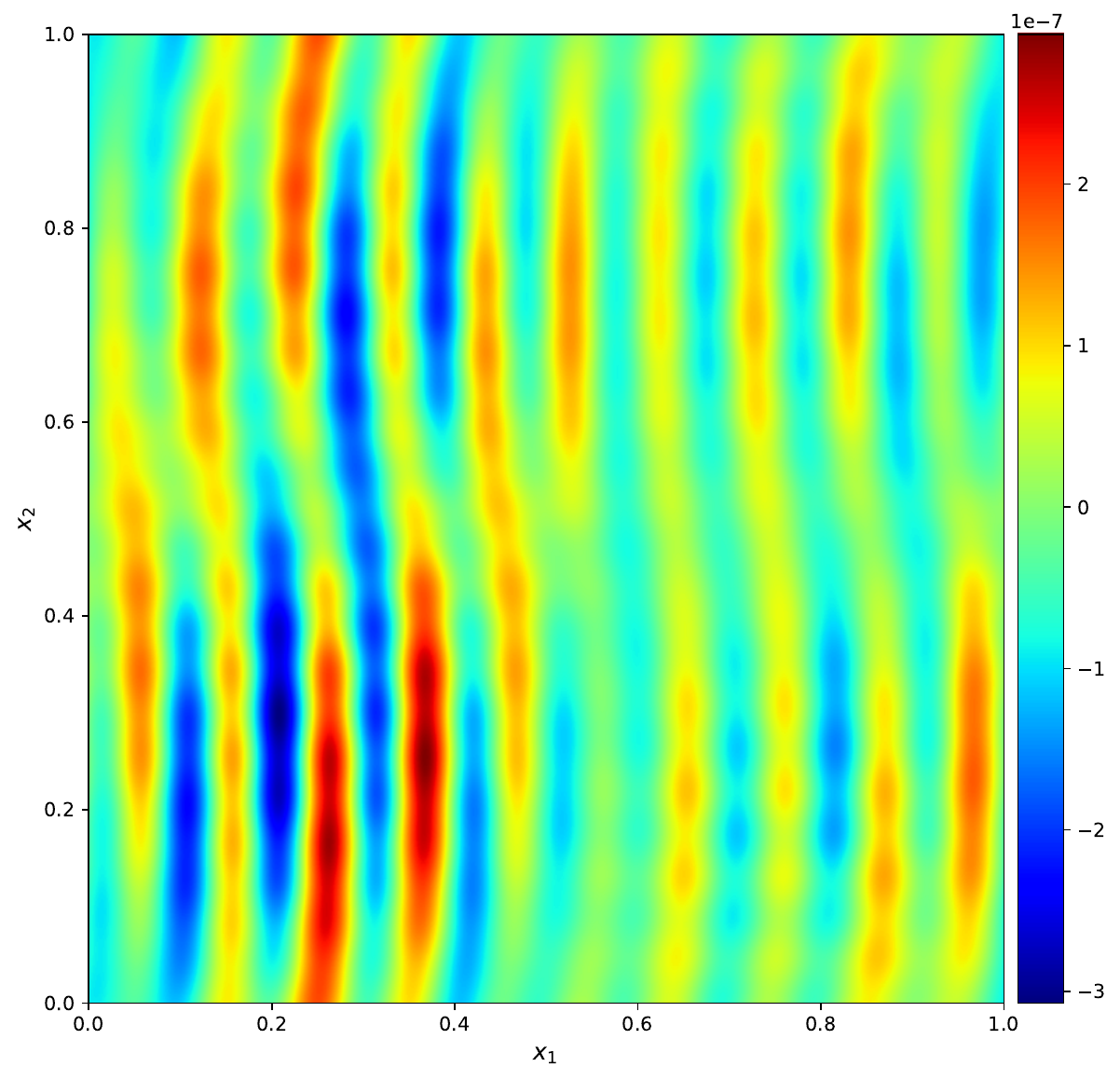}
\includegraphics[width=6cm,height=4.5cm]{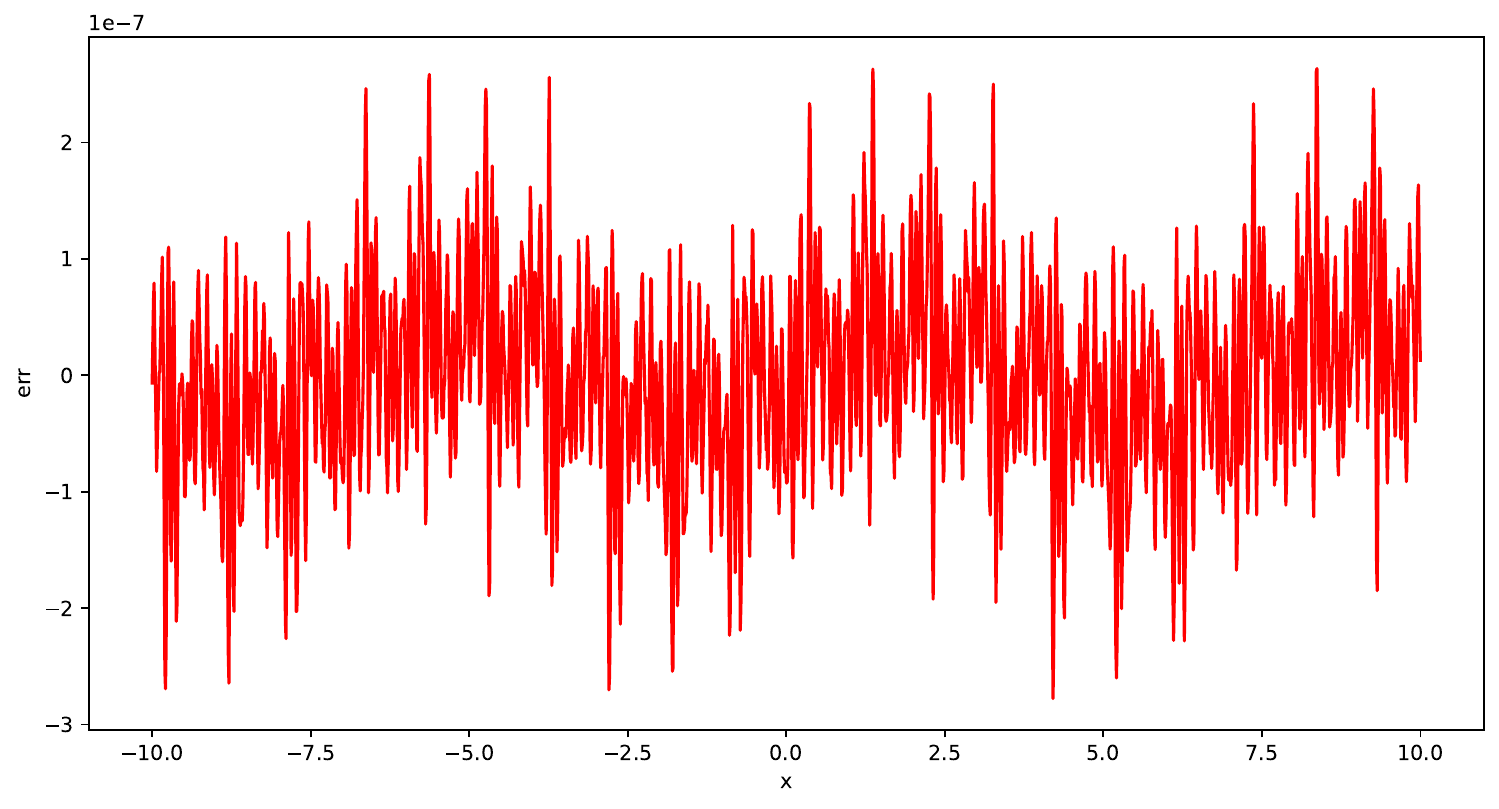}
\caption{The numerical results for Example 2 with loss function (\ref{loss_Residual}), top-left shows the 
TNN approximation $\Psi(\boldsymbol{y};c,\Theta)$ to $U(\boldsymbol{y})$, 
top-right is the figure of the approximation $\Phi(\boldsymbol{x};c,\Theta)$ to 
the quasiperiodic function $u(\boldsymbol{x})$, dow-left 
shows the error $U(\boldsymbol{y})-\Psi(\boldsymbol{y};c, \Theta)$ and 
down-right is the error $u(\boldsymbol{x})-\Phi(\boldsymbol{x};c,\Theta)$.}\label{fig_errors_ex2}
\end{figure}

\subsection{Example 3}
This example is concerned with solving two dimensional 
quasiperiodic elliptic problem with the following 
coefficient 
\[
\alpha(\boldsymbol{x}) = \cos(2\pi x_1) + \cos(2\pi \sqrt{2} x_2) 
+ \cos(2\pi x_2) + \cos(2\pi \sqrt{3} x_2) + 12.
\]
The source term $f(\boldsymbol x)$ is chosen such that the exact solution is 
\[
u = \sin(2\pi x_1) + \sin(2\pi \sqrt{2} x_1) + \sin(2\pi x_2) + \sin(2\pi \sqrt{3} x_2).
\]
Then the projection matrix should be 
\[
P = 
\begin{bmatrix} 
1 & \sqrt{2} & 0 & 0 \\ 
0 & 0 & 1 & \sqrt{3} 
\end{bmatrix},
\]
The higher dimensional coefficient $A(\boldsymbol{y})$ and exact solution $U(\boldsymbol{y})$ are defined as follows 
\begin{eqnarray*}
&&A(\boldsymbol{y}) = \cos(2\pi y_1) + \cos(2\pi y_2) + \cos(2\pi y_3) + \cos(2\pi y_4) + 12,\\
&&U(\boldsymbol{y}) = \sin(2\pi y_1) + \sin(2\pi y_2) + \sin(2\pi y_3) + \sin(2\pi y_4).
\end{eqnarray*}

First, we use the loss function defined in (\ref{loss_Ritz}) 
for Step 4 of Algorithm \ref{Algorithm_1}, training the TNN for 1,000 iterations with Adam and subsequently for 1000 iterations with LBFGS.
The relative $L^2$ norm error of the approximation $\Phi(\boldsymbol{x};c,\Theta)$ 
to the exact solution $u(\boldsymbol{x})$ is \(6.2215 \times 10^{-6}\).  
The corresponding numerical results are presented in Figure \ref{fig_errors_ex3_ritz}.  
\begin{figure}[ht]
\centering
\includegraphics[width=6cm,height=5cm]{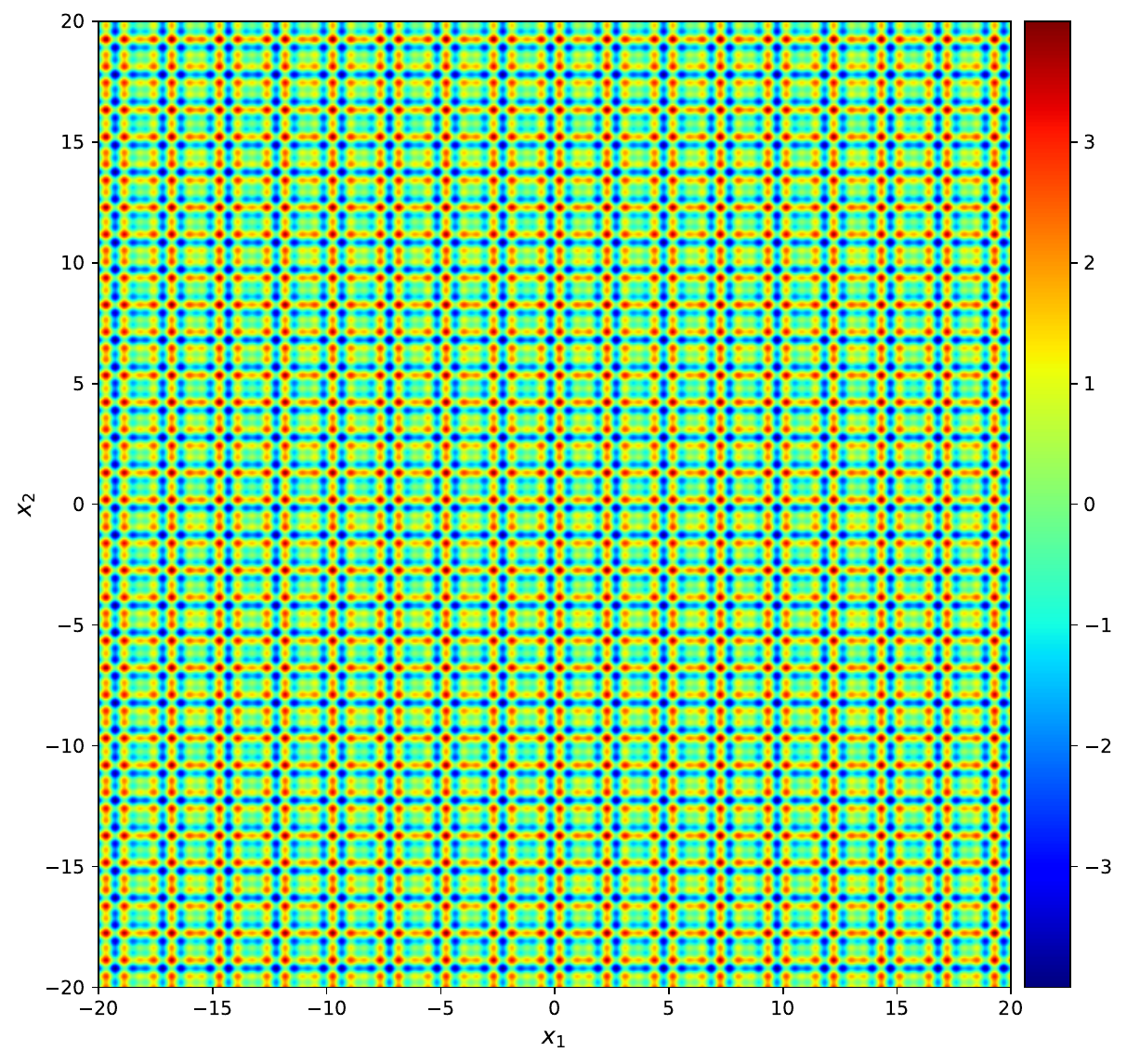}
\includegraphics[width=6cm,height=5cm]{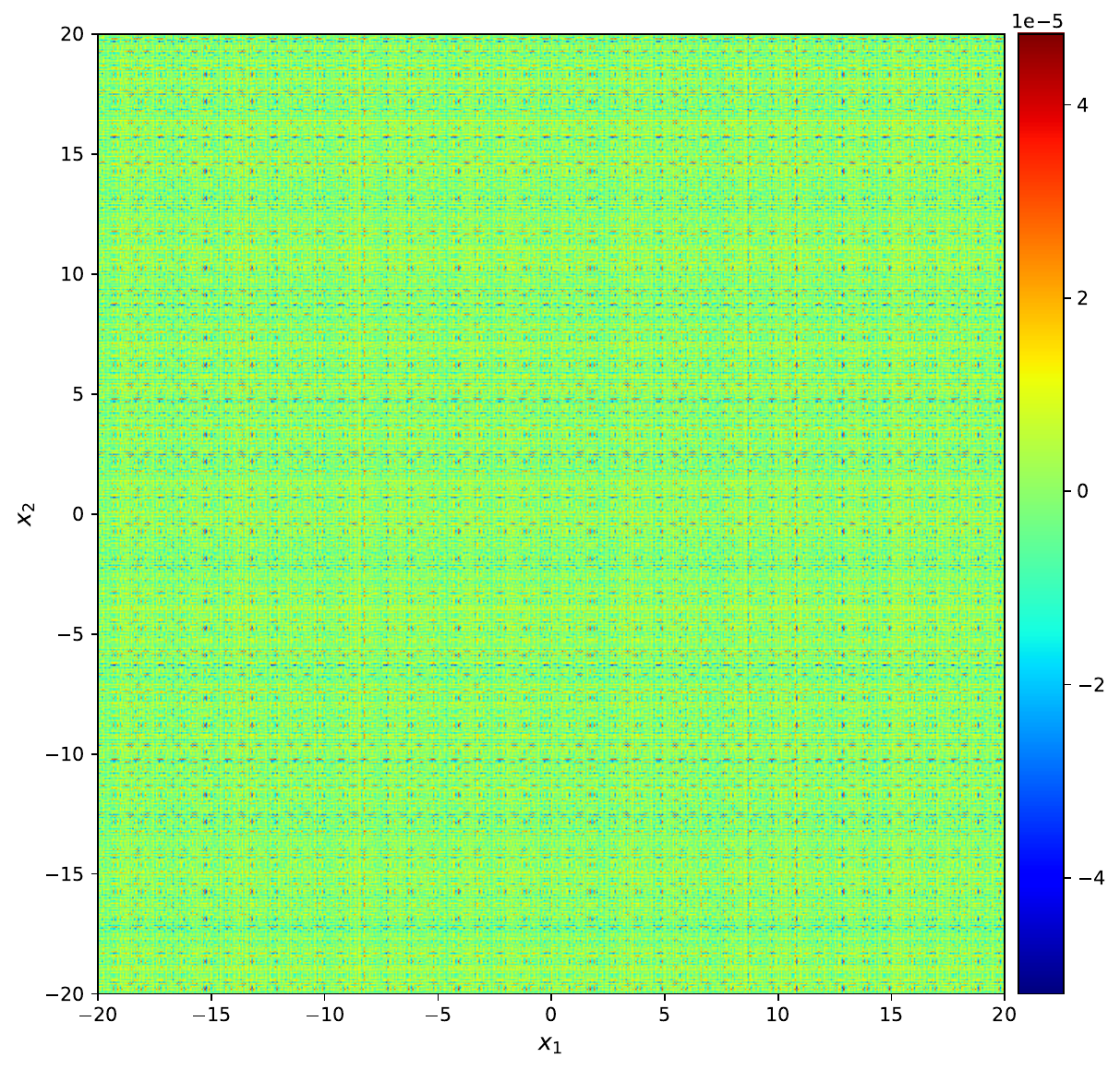}
\caption{The numerical results for Example 3 with loss function (\ref{loss_Ritz}), left shows 
the figure of the approximation $\Phi(P^\top \boldsymbol{x};\Theta)$ to 
the quasiperiodic funtion $u(\boldsymbol{x})$, 
right shows the error $u(\boldsymbol{x})-\Phi(P^\top  \boldsymbol{x})$.}\label{fig_errors_ex3_ritz}
\end{figure}

To further improve the accuracy, then we use the loss function defined in (\ref{loss_Residual}) 
for Step 4 of Algorithm \ref{Algorithm_1}, training the TNN for 3000 iterations with LBFGS.
The relative $L^2$ norm error of the approximation $\Phi(\boldsymbol{x};c,\Theta)$ 
to the exact solution $u(\boldsymbol{x})$ is \(2.1950 \times 10^{-7}\).  
The corresponding numerical results are presented in Figure \ref{fig_errors_ex3}.
\begin{figure}[ht]
\centering
\includegraphics[width=6cm,height=5cm]{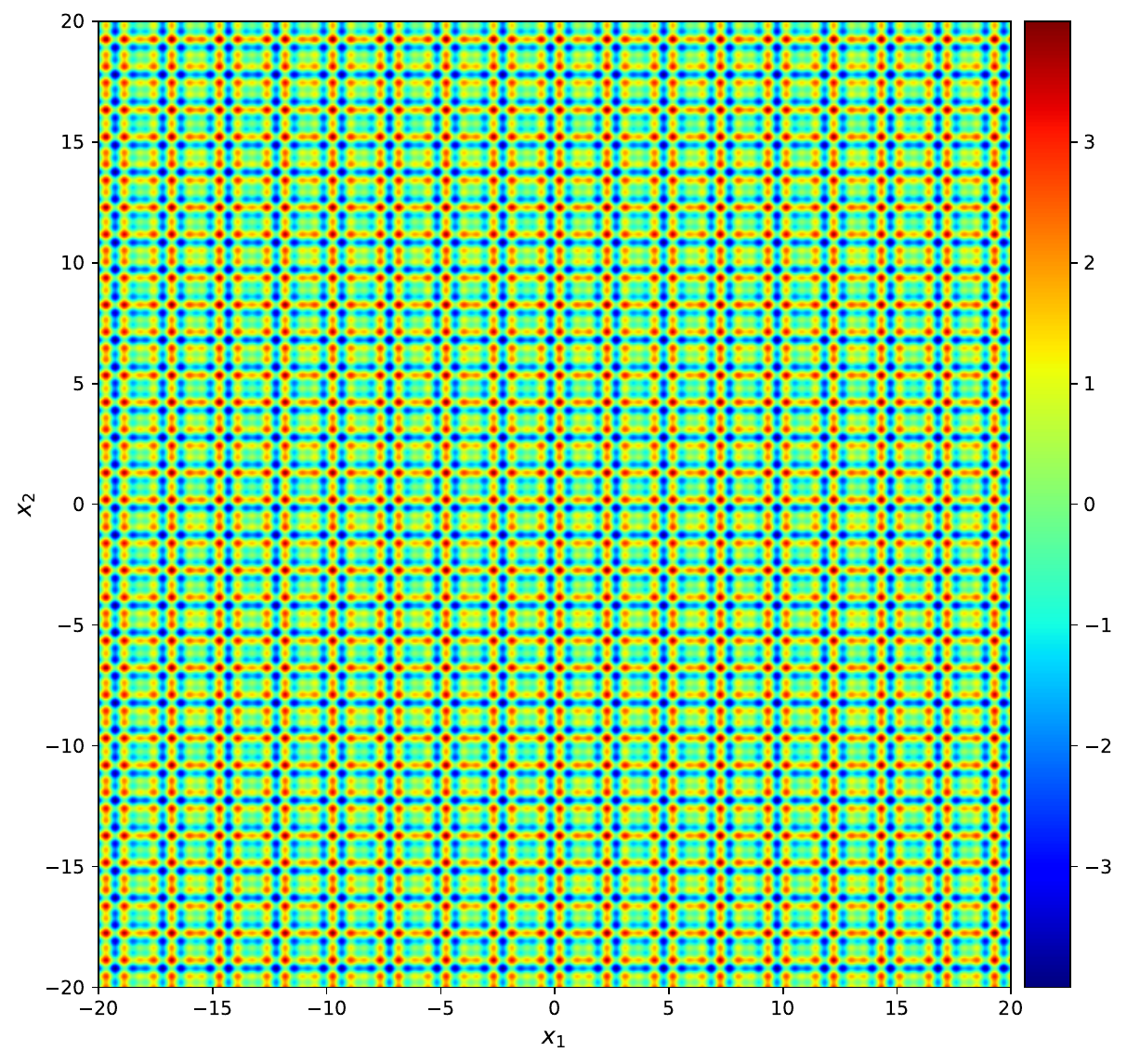}
\includegraphics[width=6cm,height=5cm]{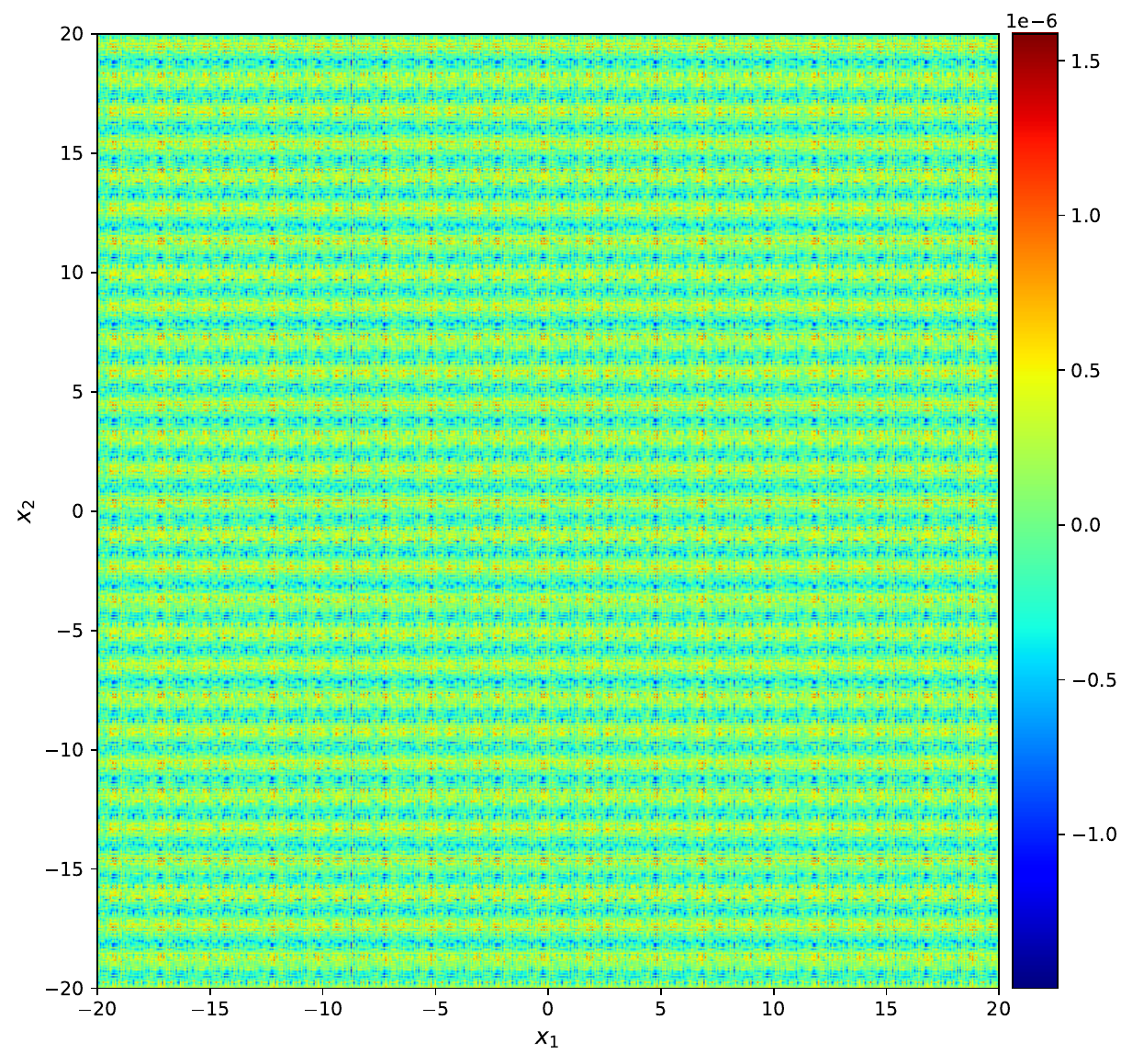}
\caption{The numerical results for Example 3 with loss function (\ref{loss_Residual}), left shows 
the figure of the approximation $\Phi(P^\top \boldsymbol{x};\Theta)$ to 
the quasiperiodic funtion $u(\boldsymbol{x})$, 
right shows the error $u(\boldsymbol{x})-\Phi(P^\top  \boldsymbol{x})$.}\label{fig_errors_ex3}
\end{figure}

\subsection{Example 4}
In this example, we come to investigate the experience of TNN based machine learning method 
for 5 $\mathbb Q$-independent periods. It means that the quasiperiodic 
coefficient $\alpha(x)$ for (\ref{Elliptic_Equation})
is set to be  
\[
\alpha = \cos(2\pi x) + \cos(2\pi \sqrt{2} x) + \cos(2\pi \sqrt{3} x) 
+ \cos(2\pi \sqrt{5} x) + \cos(2\pi \sqrt{7} x) + 12.
\]
Here, the source term $f(\boldsymbol{x})$ is chosen such that the exact solution is 
\[
u = \sin(2\pi x) + \sin(2\pi \sqrt{2} x) + \sin(2\pi \sqrt{3} x) 
+ \sin(2\pi \sqrt{5} x) + \sin(2\pi \sqrt{7} x).
\]
In order to use the projection method, the projection matrix should be 
\[
P = \begin{bmatrix} 1 & \sqrt{2} & \sqrt{3} & \sqrt{5} & \sqrt{7} \end{bmatrix}.
\]
Then the higher dimensional coefficient $A(\boldsymbol{y})$ and 
exact solution $U(\boldsymbol{y})$ are 
\begin{eqnarray*}
&&A = \cos(2\pi y_1) + \cos(2\pi y_2) + \cos(2\pi y_3) + \cos(2\pi y_4) + \cos(2\pi y_5) + 12,\\
&&U = \sin(2\pi y_1) + \sin(2\pi y_2) + \sin(2\pi y_3) + \sin(2\pi y_4) + \sin(2\pi y_5).
\end{eqnarray*}

First, we use the loss function defined in (\ref{loss_Ritz}) 
for Step 4 of Algorithm \ref{Algorithm_1}, training the TNN for 1,000 iterations with Adam and subsequently for 1000 iterations with LBFGS.
The relative $L^2$ norm error of the approximation $\Phi(\boldsymbol{x};c,\Theta)$ 
to the exact solution $u(\boldsymbol{x})$ is \(1.2018 \times 10^{-5}\).  
The corresponding numerical results are presented in Figure \ref{fig_errors_ex4_ritz}.  
\begin{figure}[ht]
\centering
\includegraphics[width=6cm,height=5cm]{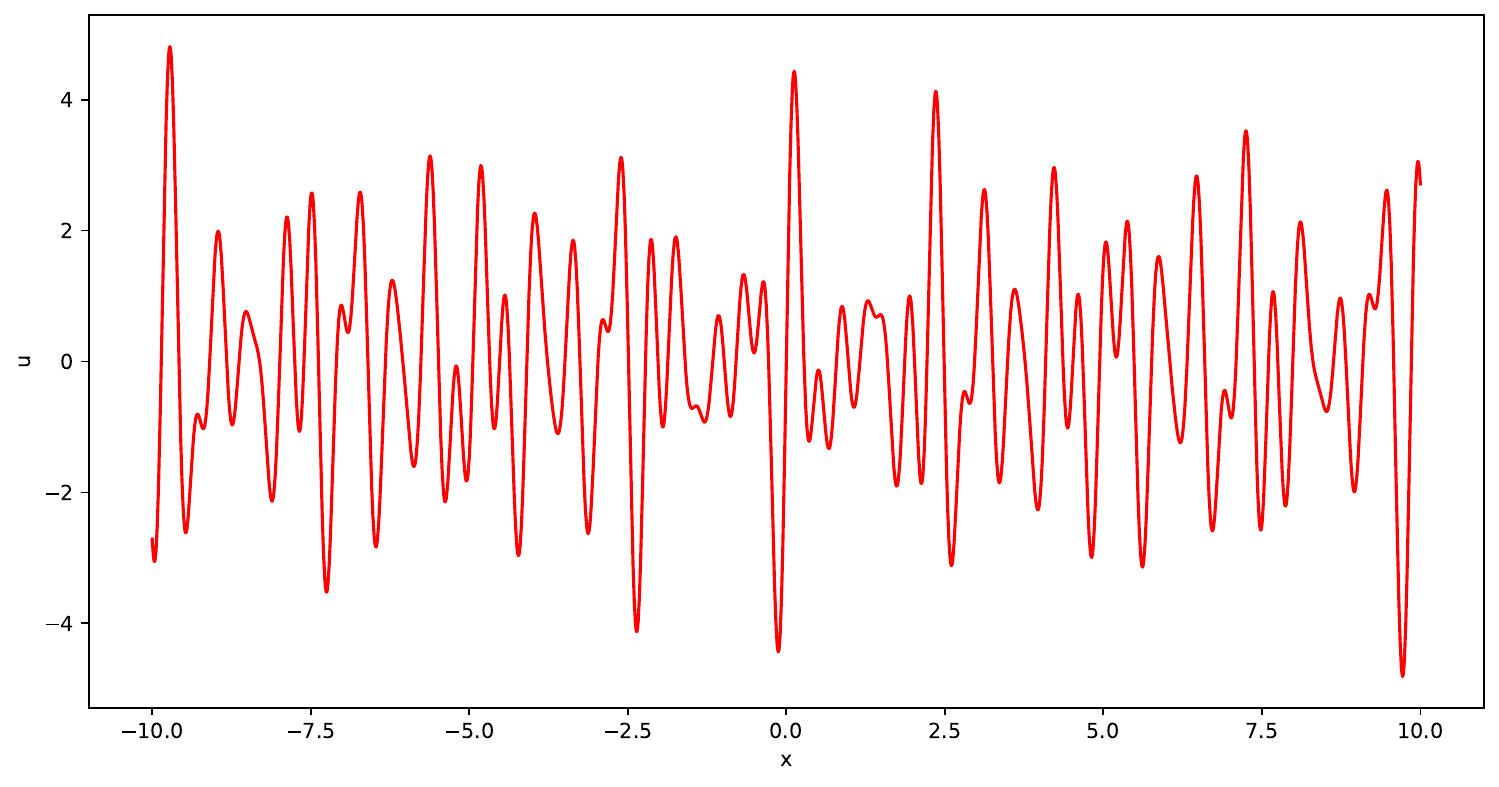}
\includegraphics[width=6cm,height=5cm]{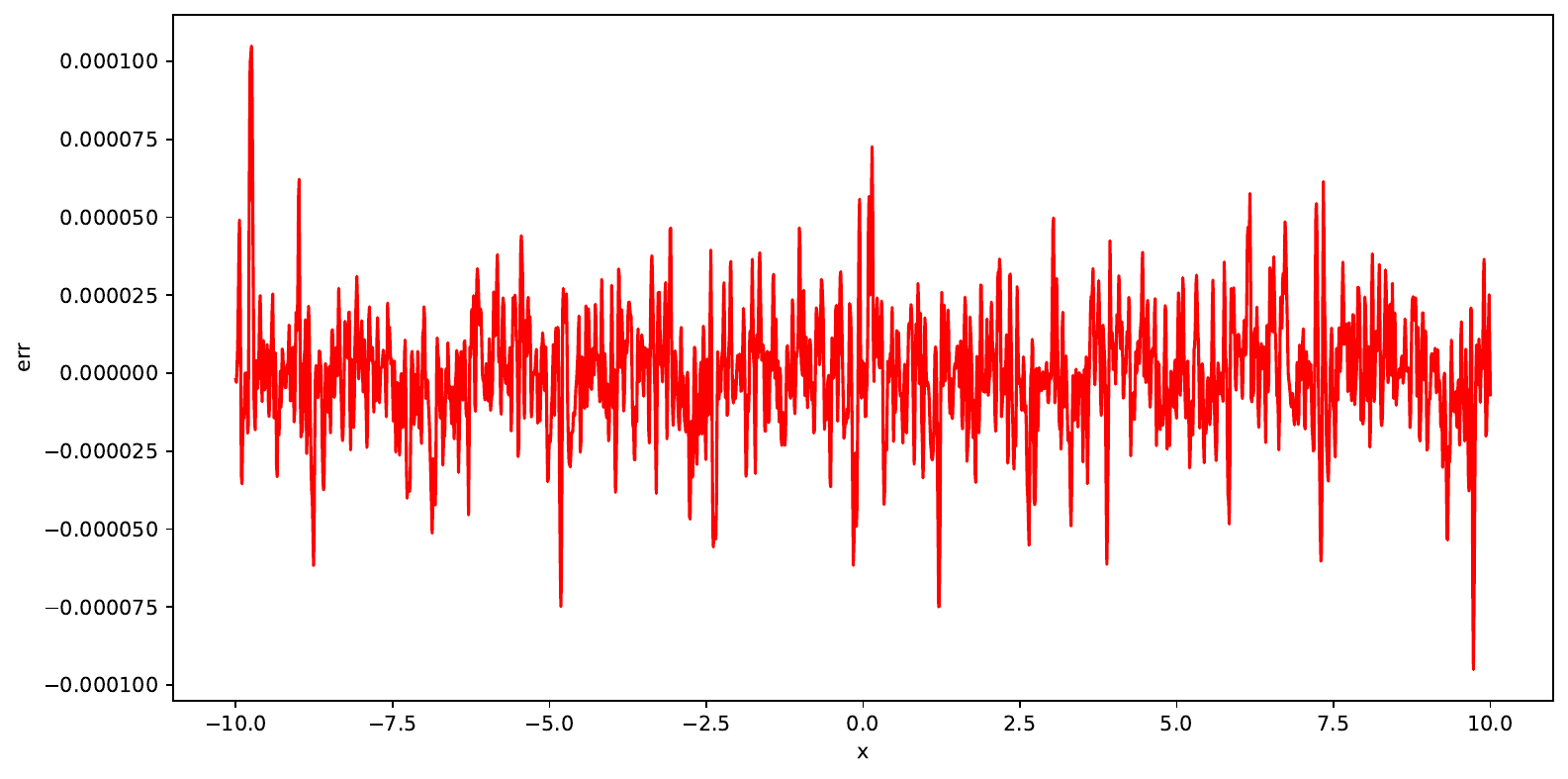}
\caption{The numerical results for Example 4 with loss function (\ref{loss_Ritz}), left shows 
the figure of the approximation $\Phi(\boldsymbol{x};c,\Theta)$ to 
the quasiperiodic funtion $u(\boldsymbol{x})$, 
right shows the error $u(\boldsymbol{x})-\Phi(\boldsymbol{x};c,\Theta)$.}\label{fig_errors_ex4_ritz}
\end{figure}

To further improve the accuracy, then we use the loss function defined in (\ref{loss_Residual}) 
for Step 4 of Algorithm \ref{Algorithm_1}, training the TNN for 3000 iterations with LBFGS.
The relative $L^2$ norm error of the approximation $\Phi(\boldsymbol{x};c,\Theta)$ 
to the exact solution $u(\boldsymbol{x})$ is \(3.2539 \times 10^{-6}\).  
The corresponding numerical results are presented in Figure \ref{fig_errors_ex4}.
\begin{figure}[ht]
\centering
\includegraphics[width=6cm,height=5cm]{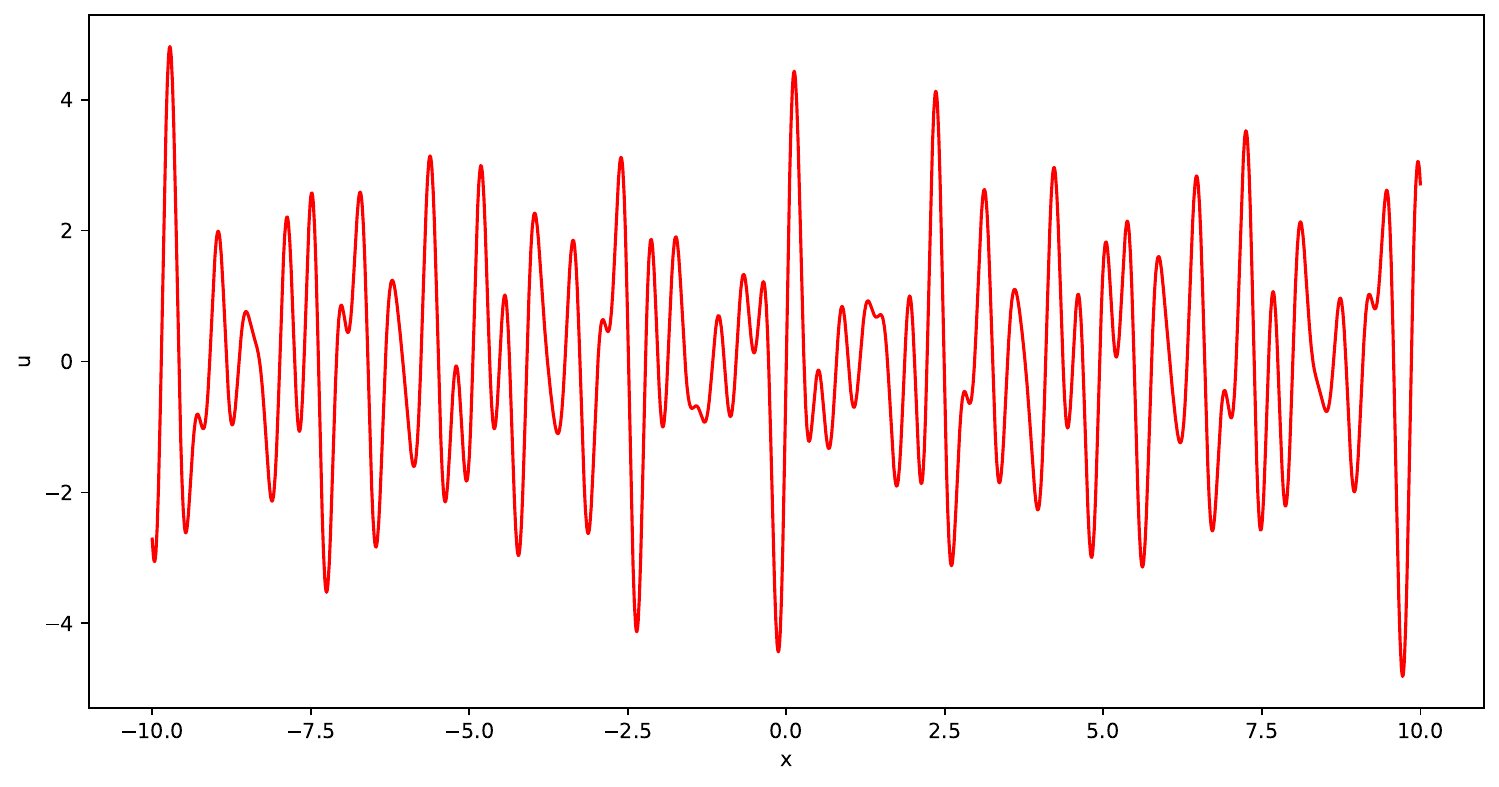}
\includegraphics[width=6cm,height=5cm]{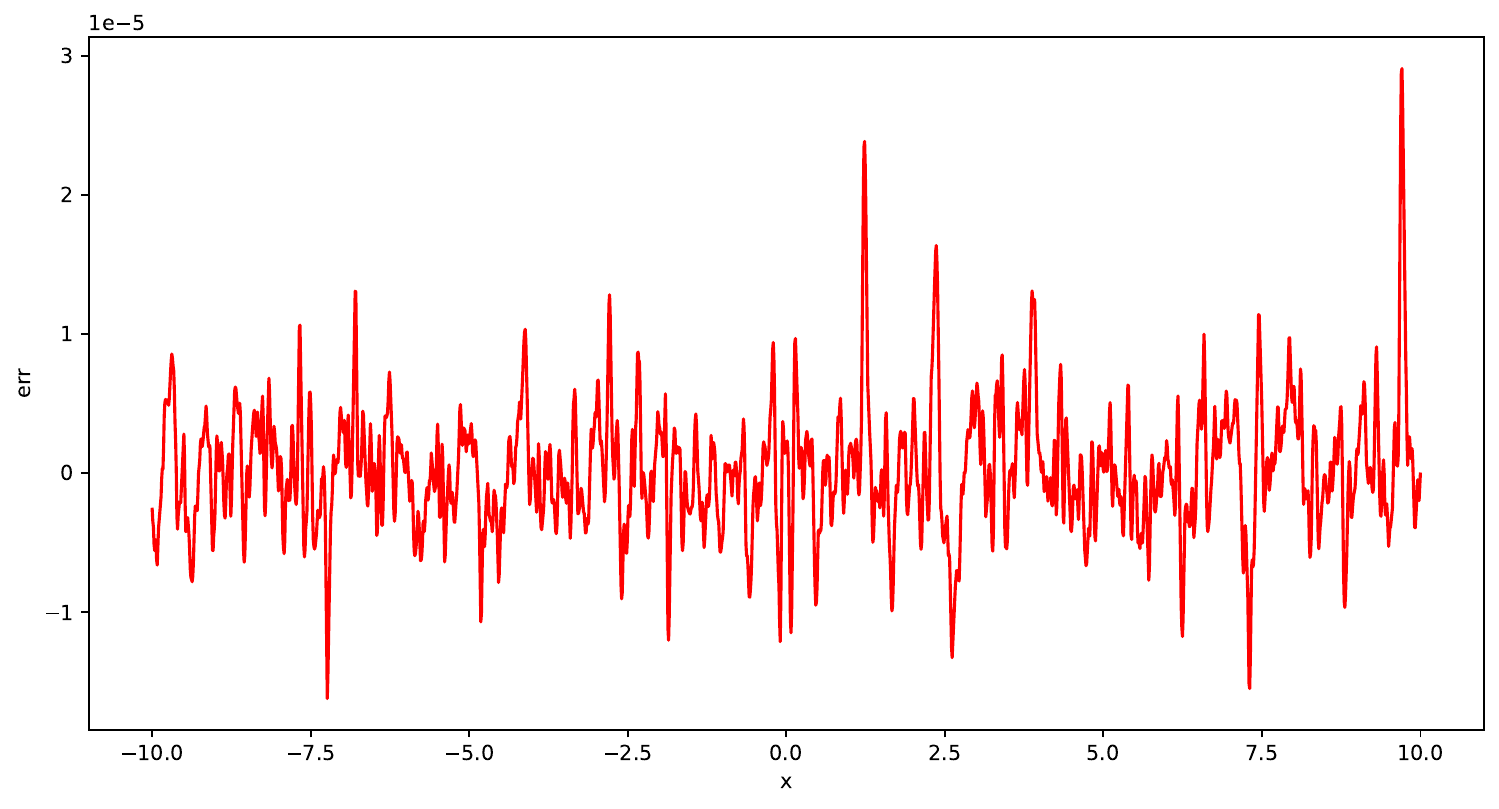}
\caption{The numerical results for Example 4 with loss function (\ref{loss_Residual}), left shows 
the figure of the approximation $\Phi(\boldsymbol{x};c,\Theta)$ to 
the quasiperiodic funtion $u(\boldsymbol{x})$, 
right shows the error $u(\boldsymbol{x})-\Phi(\boldsymbol{x};c,\Theta)$.}\label{fig_errors_ex4}
\end{figure}

\subsection{Example 5}\label{Example2_Ritz}
In this example, we consider the quasiperiodic elliptic problem (\ref{Elliptic_Equation}) 
with $10$ $\mathbb Q$-independent periodic coefficient 
\[
\begin{aligned}
\alpha &= \cos(2\pi x) + \cos(2\pi \sqrt{2} x) + \cos(2\pi \sqrt{3} x) + \cos(2\pi \sqrt{5} x) 
+ \cos(2\pi \sqrt{7} x) \\
&\quad + \cos(2\pi \sqrt{11} x) + \cos(2\pi \sqrt{13} x) + \cos(2\pi \sqrt{17} x) 
+ \cos(2\pi \sqrt{19} x) + \cos(2\pi \sqrt{23} x) + 12. 
\end{aligned}
\]
The source term can be chosen such that the exact solution is 
\[
\begin{aligned}
u &= \sin(2\pi x) + \sin(2\pi \sqrt{2} x) + \sin(2\pi \sqrt{3} x) + \sin(2\pi \sqrt{5} x) 
+ \sin(2\pi \sqrt{7} x) \\
&\quad + \sin(2\pi \sqrt{11} x) + \sin(2\pi \sqrt{13} x) + \sin(2\pi \sqrt{17} x) 
+ \sin(2\pi \sqrt{19} x) + \sin(2\pi \sqrt{23} x).
\end{aligned}
\]
Then the projection matrix should be 
\[
P = \begin{bmatrix} 
1 & \sqrt{2} & \sqrt{3} & \sqrt{5} & \sqrt{7} & \sqrt{11} & \sqrt{13} & \sqrt{17} & \sqrt{19} & \sqrt{23} \end{bmatrix}.
\]
Then the corresponding higher dimensional coefficient $A(\boldsymbol{y})$ and 
exact solution $U(\boldsymbol{y})$ are 
\begin{eqnarray*}
A(\boldsymbol{y}) = \sum_{i=1}^{10} \cos(2\pi y_i) + 12, \ \ \ 
U(\boldsymbol{y}) = \sum_{i=1}^{10} \sin(2\pi y_i).
\end{eqnarray*}

We use the loss function defined in (\ref{loss_Ritz}) 
for Step 4 of Algorithm \ref{Algorithm_1}, training the TNN for 1,000 iterations with Adam and subsequently for 2000 iterations with LBFGS.
The relative $L^2$ norm error of the approximation $\Phi(\boldsymbol{x};c,\Theta)$ 
to the exact solution $u(\boldsymbol{x})$ is \(1.5128 \times 10^{-5}\).  
The corresponding numerical results are presented in Figure \ref{fig_errors_ex5_ritz}.  
\begin{figure}[ht]
\centering
\includegraphics[width=6cm,height=5cm]{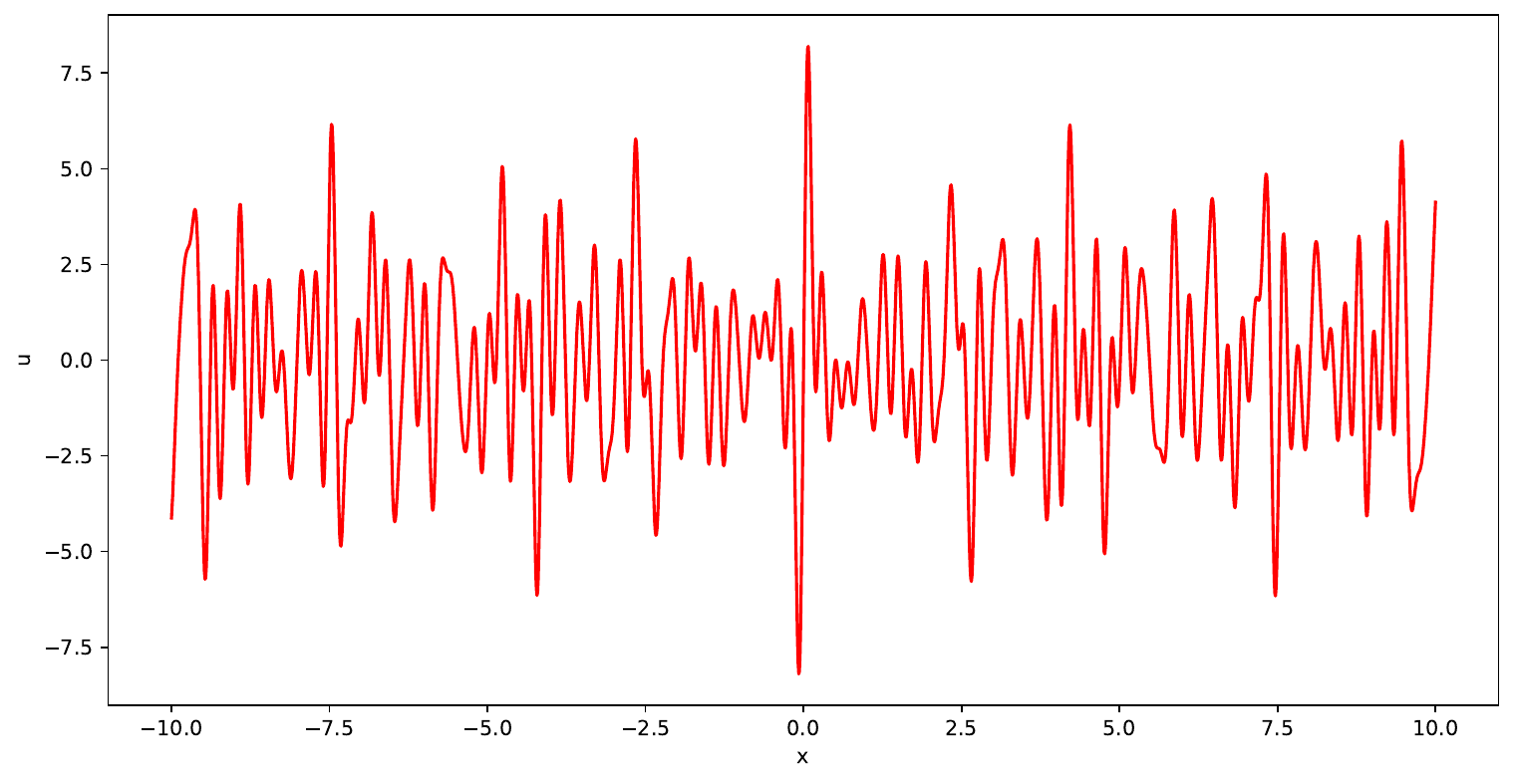}
\includegraphics[width=6cm,height=5cm]{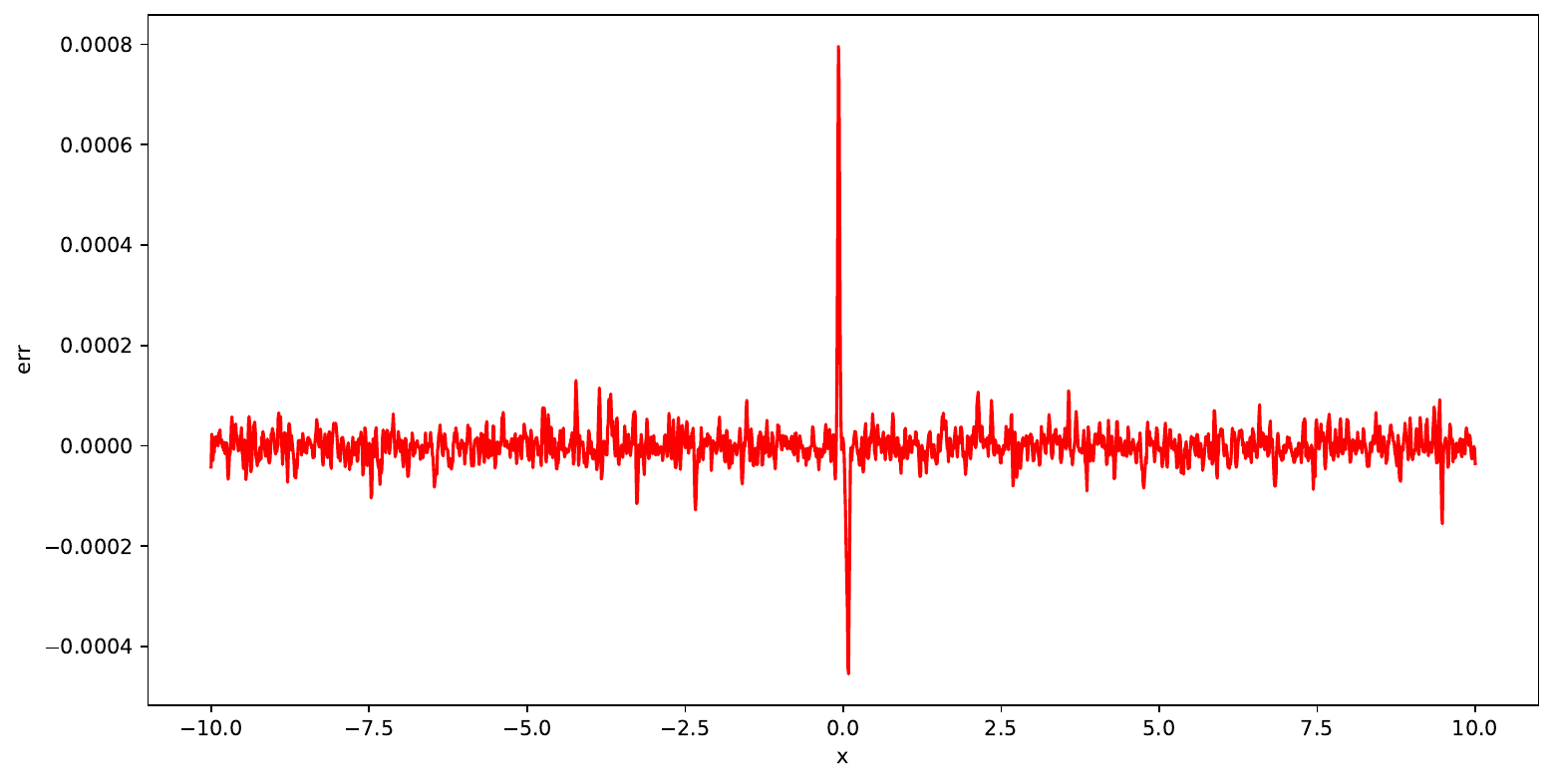}
\caption{The numerical results for Example 5 with loss function (\ref{loss_Ritz}), left shows 
the figure of the approximation $\Phi(\boldsymbol{x};c,\Theta)$ to 
the quasiperiodic funtion $u(\boldsymbol{x})$, 
right shows the error $u(\boldsymbol{x})-\Phi(\boldsymbol{x};c,\Theta)$.}\label{fig_errors_ex5_ritz}
\end{figure}

\subsection{Example 6}
In this example,  we solve the two dimensional quasiperiodic elliptic 
problem with the following 
coefficient 
\[
\alpha = \sum_{i=0}^{5} \cos\left( 2\pi (\cos(i) x_1 + \sin(i) x_2) \right) + 12.
\]
The source term $f(\boldsymbol{x})$ in (\ref{Elliptic_Equation}) is chosen such that the exact solution is 
\[
u = \sum_{i=0}^{5} \sin\left( 2\pi (\cos(i) x_1 + \sin(i) x_2) \right).
\]
Based on the coefficient $\alpha$, we should define the projection matrix as follows 
\[
P = \begin{bmatrix} 1 & \cos(1) & \cos(2) & \cos(3) & \cos(4) & \cos(5) \\ 
0 & \sin(1) & \sin(2) & \sin(3) & \sin(4) & \sin(5) \end{bmatrix},
\]
Then the corresponding higher dimensional coefficient $A(\boldsymbol{y})$ 
and the exact solution $U(\boldsymbol{y})$ are given by 
\[
A = \sum_{i=1}^{6} \cos(2\pi y_i) + 12, \ \ \ 
U = \sum_{i=1}^{6} \sin(2\pi y_i).
\]

First, we use the loss function defined in (\ref{loss_Ritz}) 
for Step 4 of Algorithm \ref{Algorithm_1}, training the TNN for 1,000 iterations with Adam and subsequently for 1000 iterations with LBFGS.
The relative $L^2$ norm error of the approximation $\Phi(\boldsymbol{x};c,\Theta)$ 
to the exact solution $u(\boldsymbol{x})$ is \(2.2296 \times 10^{-5}\).  
The corresponding numerical results are presented in Figure \ref{fig_errors_ex6_ritz}.  
\begin{figure}[ht]
\centering
\includegraphics[width=6cm,height=5cm]{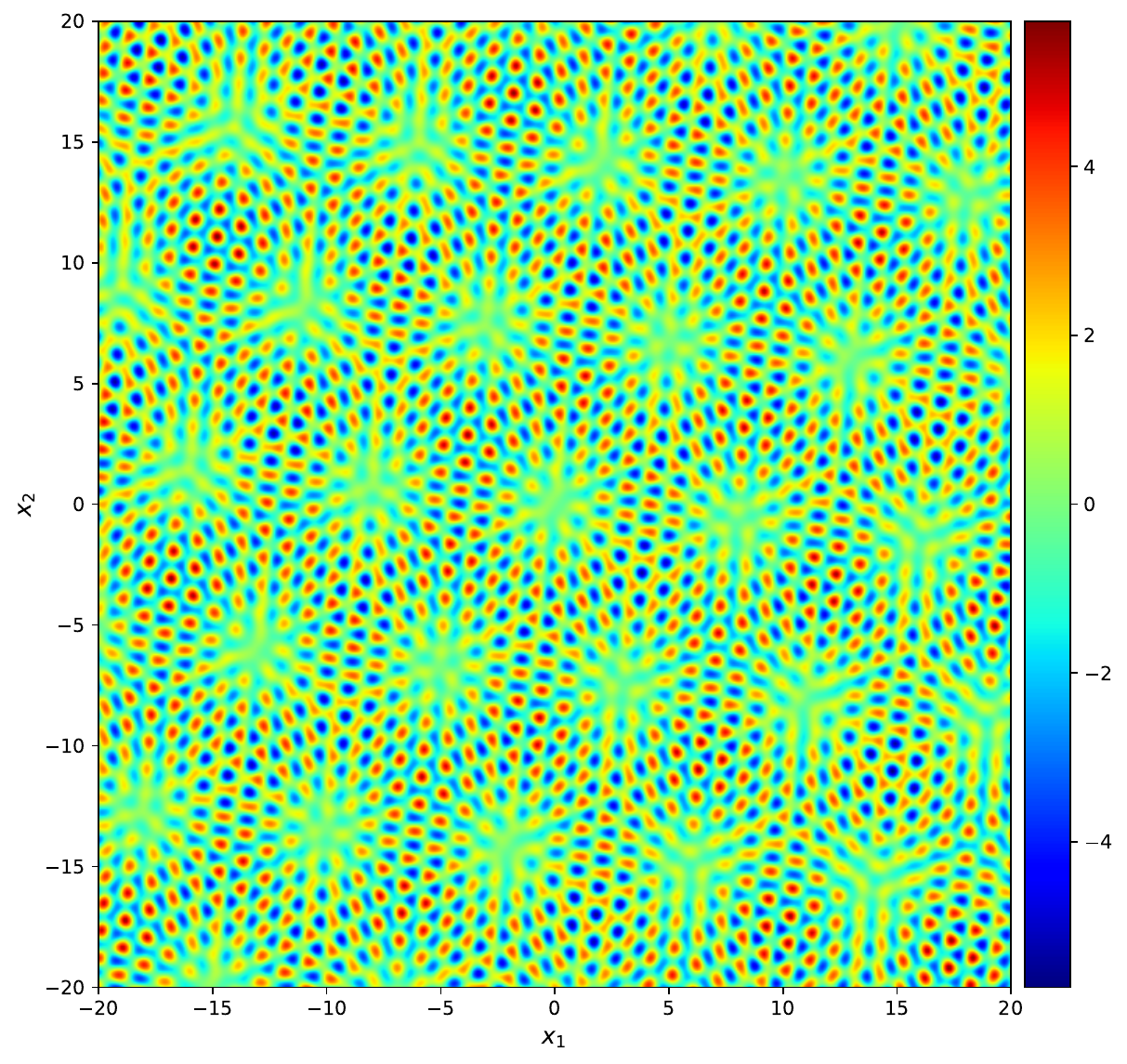}
\includegraphics[width=6cm,height=5cm]{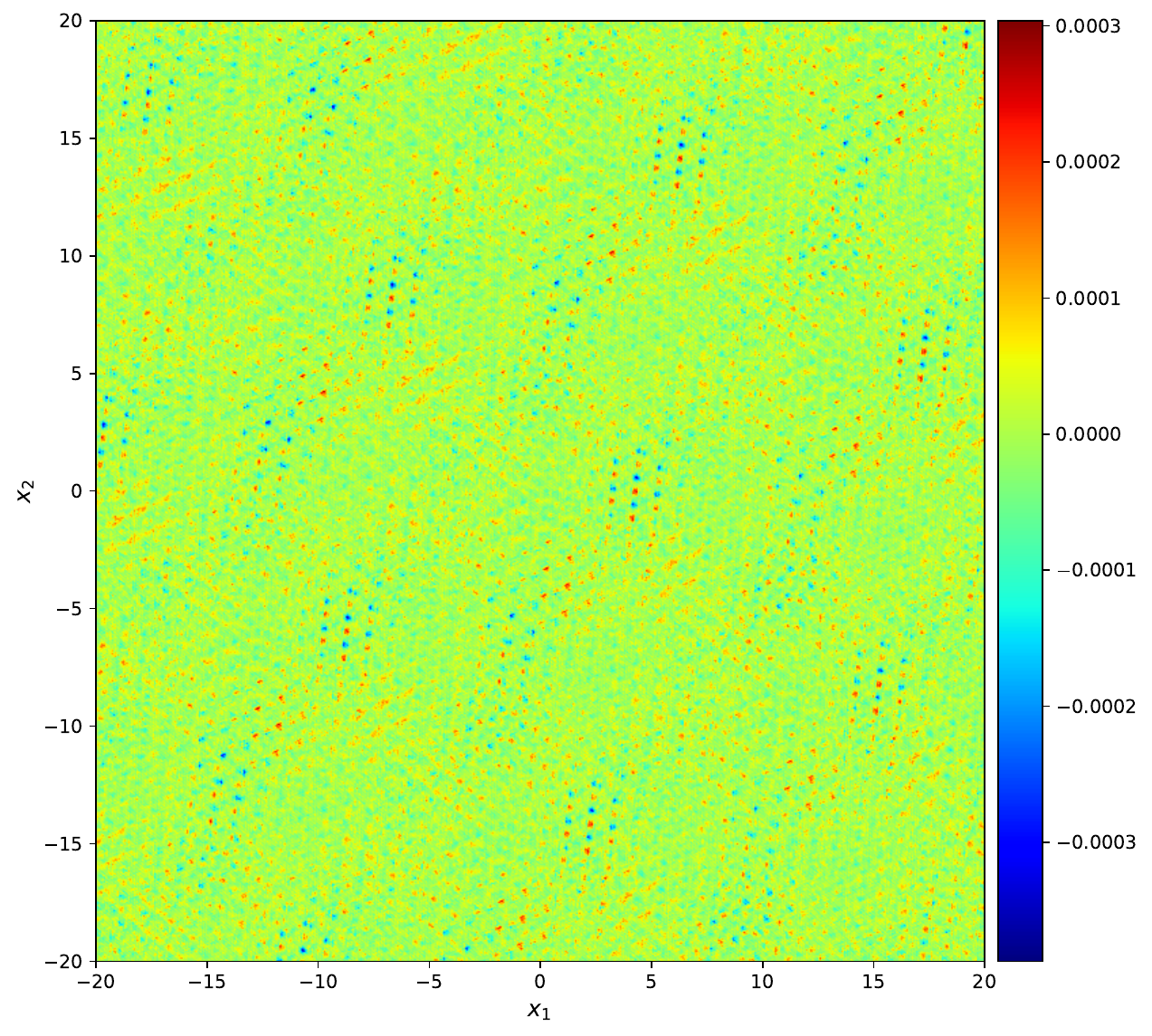}
\caption{The numerical results for Example 6 with loss function (\ref{loss_Ritz}), left shows 
the figure of the approximation $\Phi(\boldsymbol{x};c,\Theta)$ to 
the quasiperiodic funtion $u(\boldsymbol{x})$, 
right shows the error $u(\boldsymbol{x})-\Phi(\boldsymbol{x};c,\Theta)$.}\label{fig_errors_ex6_ritz}
\end{figure}

To further improve the accuracy, then we use the loss function defined in (\ref{loss_Residual}) 
for Step 4 of Algorithm \ref{Algorithm_1}, training the TNN for 2000 iterations with LBFGS.
The relative $L^2$ norm error of the approximation $\Phi(\boldsymbol{x};c,\Theta)$ 
to the exact solution $u(\boldsymbol{x})$ is \(3.4881 \times 10^{-6}\).  
The corresponding numerical results are presented in Figure \ref{fig_errors_ex6}.
\begin{figure}[ht]
\centering
\includegraphics[width=6cm,height=5cm]{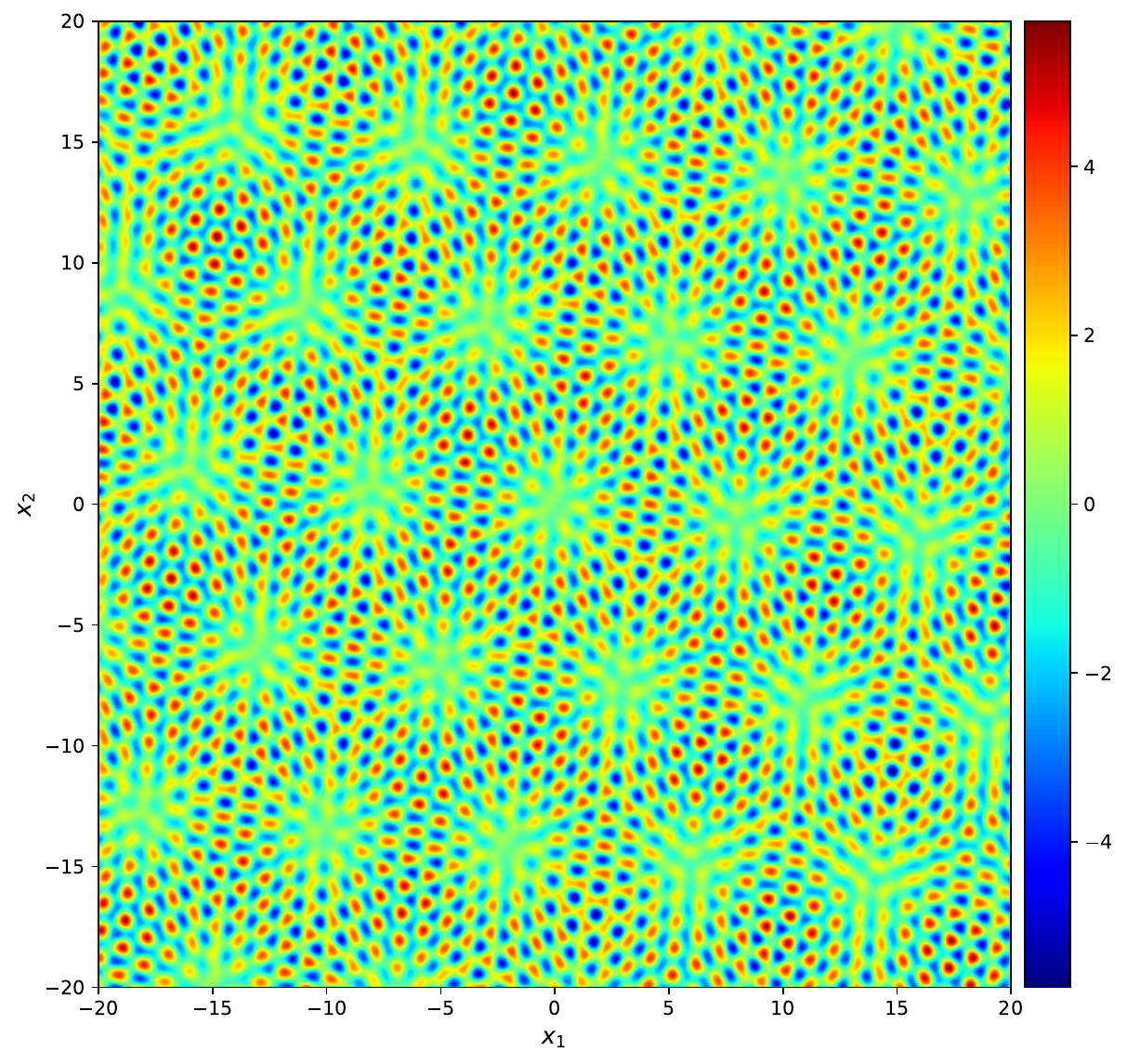}
\includegraphics[width=6cm,height=5cm]{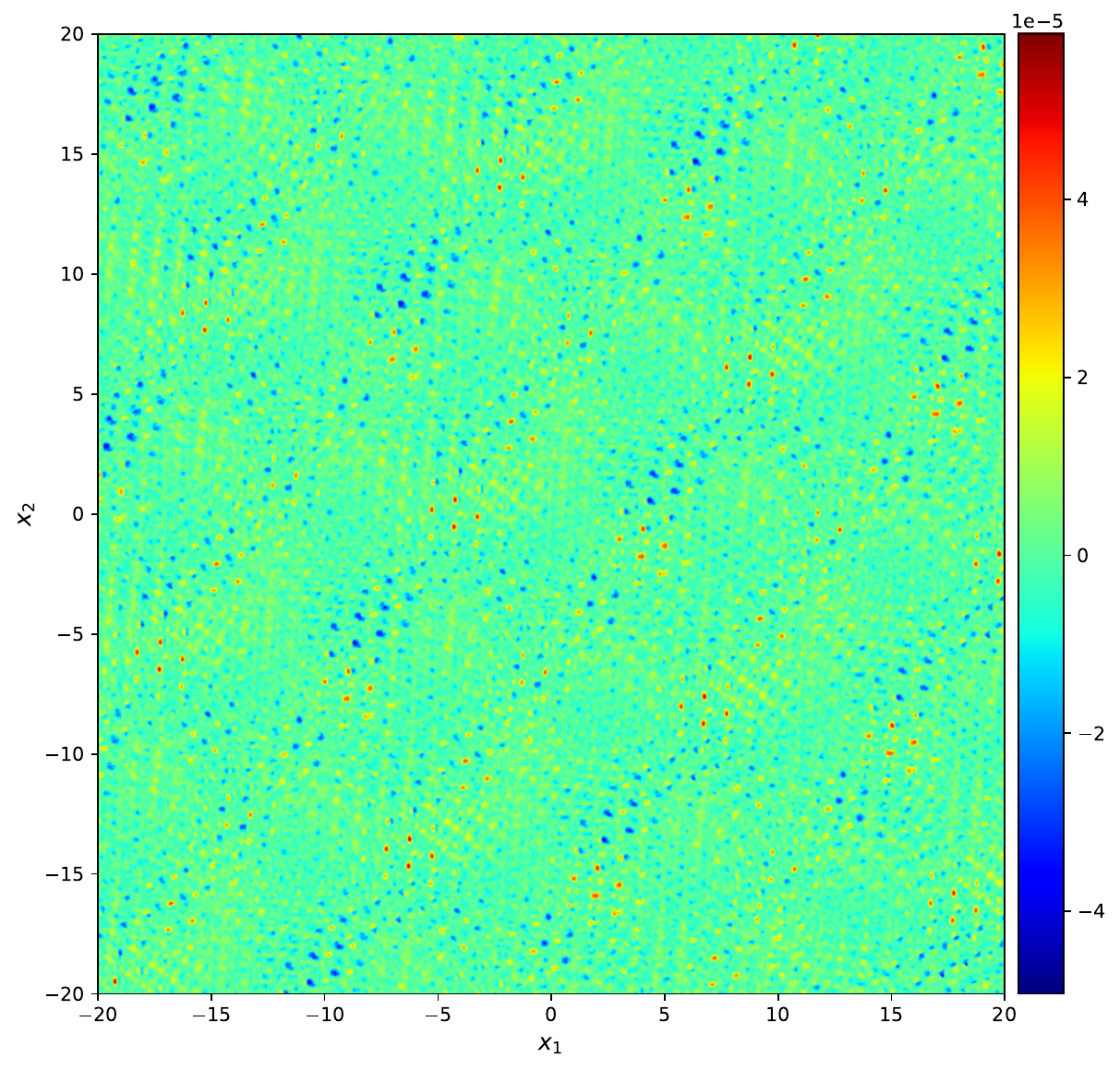}
\caption{The numerical results for Example 6 with loss function (\ref{loss_Residual}), left shows 
the figure of the approximation $\Phi(\boldsymbol{x};c,\Theta)$ to 
the quasiperiodic funtion $u(\boldsymbol{x})$, 
right shows the error $u(\boldsymbol{x})-\Phi(\boldsymbol{x};c,\Theta)$.}\label{fig_errors_ex6}
\end{figure}


\subsection{Example 7}
In this example,  we solve the quasiperiodic elliptic 
problem (\ref{Elliptic_Equation}) with the coefficient 
having  $6$ $\mathbb Q$-independent periods  coefficient 
\[
\alpha = \sum_{i=0}^{5} \cos\left( 2\pi (\cos(i) x_1 + \sin(i) x_2) \right) 
+ \cos\left( 2\pi^2 (\cos(i) x_1 + \sin(i) x_2) \right) + 12.
\]
The source function $f$ is chosen such that 
\[
u = \sum_{i=0}^{5} \sin\left( 2\pi (\cos(i) x_1 + \sin(i) x_2) \right) 
+ \sin\left( 2\pi^2 (\cos(i) x_1 + \sin(i) x_2) \right).
\]
In order to use the projection method, we define the matrices $P_1$ and $P_2$ as 
\[
P_1 = \begin{bmatrix} 
1 & \cos(1) & \cos(2) & \cos(3) & \cos(4) & \cos(5) \\ 
0 & \sin(1) & \sin(2) & \sin(3) & \sin(4) & \sin(5) 
\end{bmatrix},
\]
and 
\[
P_2 = 
\begin{bmatrix} 
\pi & \pi \cos(1) & \pi \cos(2) & \pi \cos(3) & \pi \cos(4) & \pi \cos(5) \\ 
0   & \pi \sin(1) & \pi \sin(2) & \pi \sin(3) & \pi \sin(4) & \pi \sin(5) 
\end{bmatrix}.
\]
Then the projection matrix can be defined as 
\[
P = \begin{bmatrix} 
P_1 & P_2 
\end{bmatrix},
\]
Then it is easy to deduce the higher dimensional coefficient and exact solution 
\[
A = \sum_{i=1}^{12} \cos(2\pi y_i) + 12,\ \ \ \ 
U = \sum_{i=1}^{12} \sin(2\pi y_i).
\]

We use the loss function defined in (\ref{loss_Ritz}) 
for Step 4 of Algorithm \ref{Algorithm_1}, training the TNN for 1,000 iterations with Adam and subsequently for 2000 iterations with LBFGS.
The relative $L^2$ norm error of the approximation $\Phi(\boldsymbol{x};c,\Theta)$ 
to the exact solution $u(\boldsymbol{x})$ is \(1.5906 \times 10^{-5}\).  
The corresponding numerical results are presented in Figure \ref{fig_errors_ex7_ritz}.  
\begin{figure}[ht]
\centering
\includegraphics[width=6cm,height=5cm]{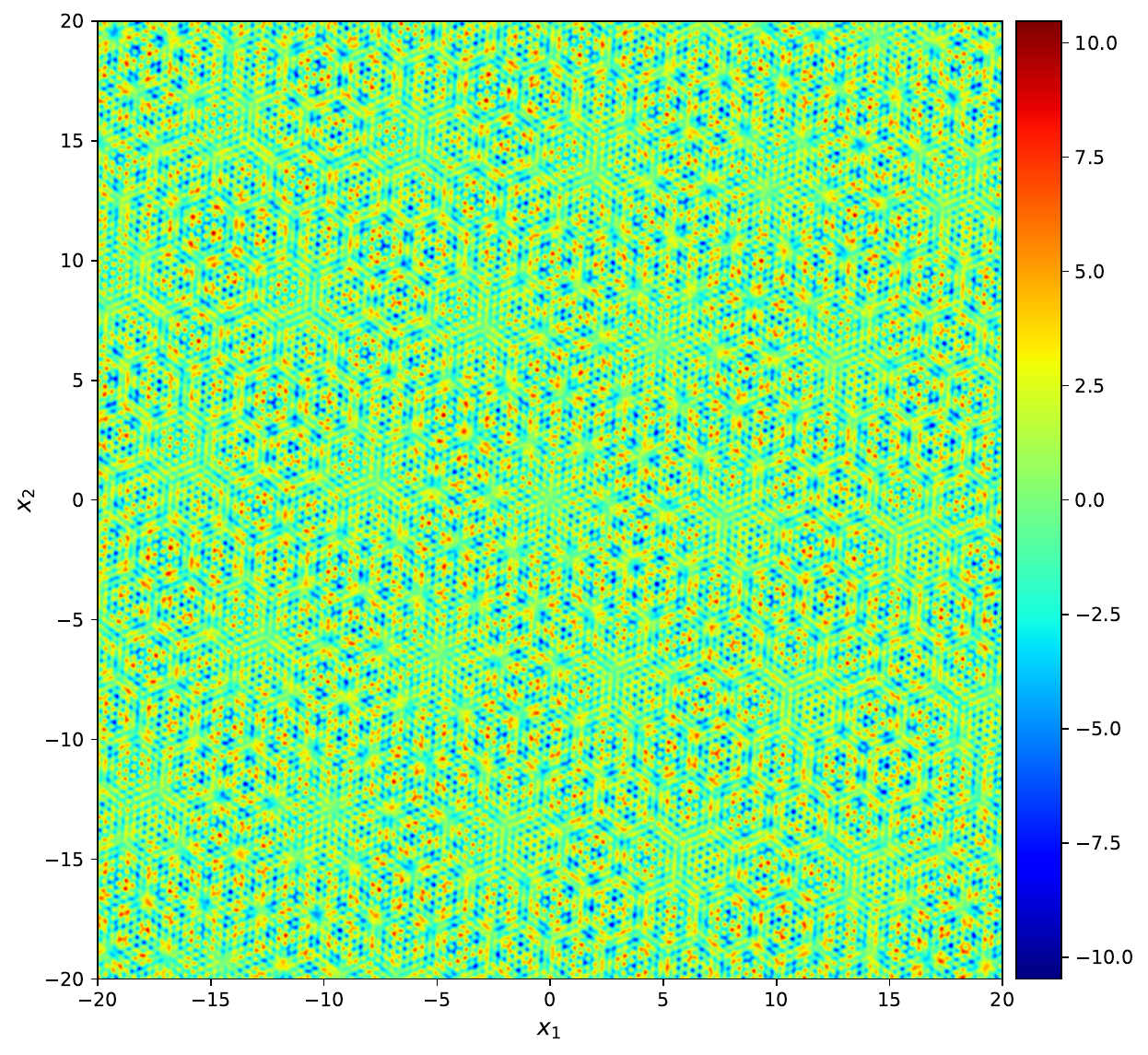}
\includegraphics[width=6cm,height=5cm]{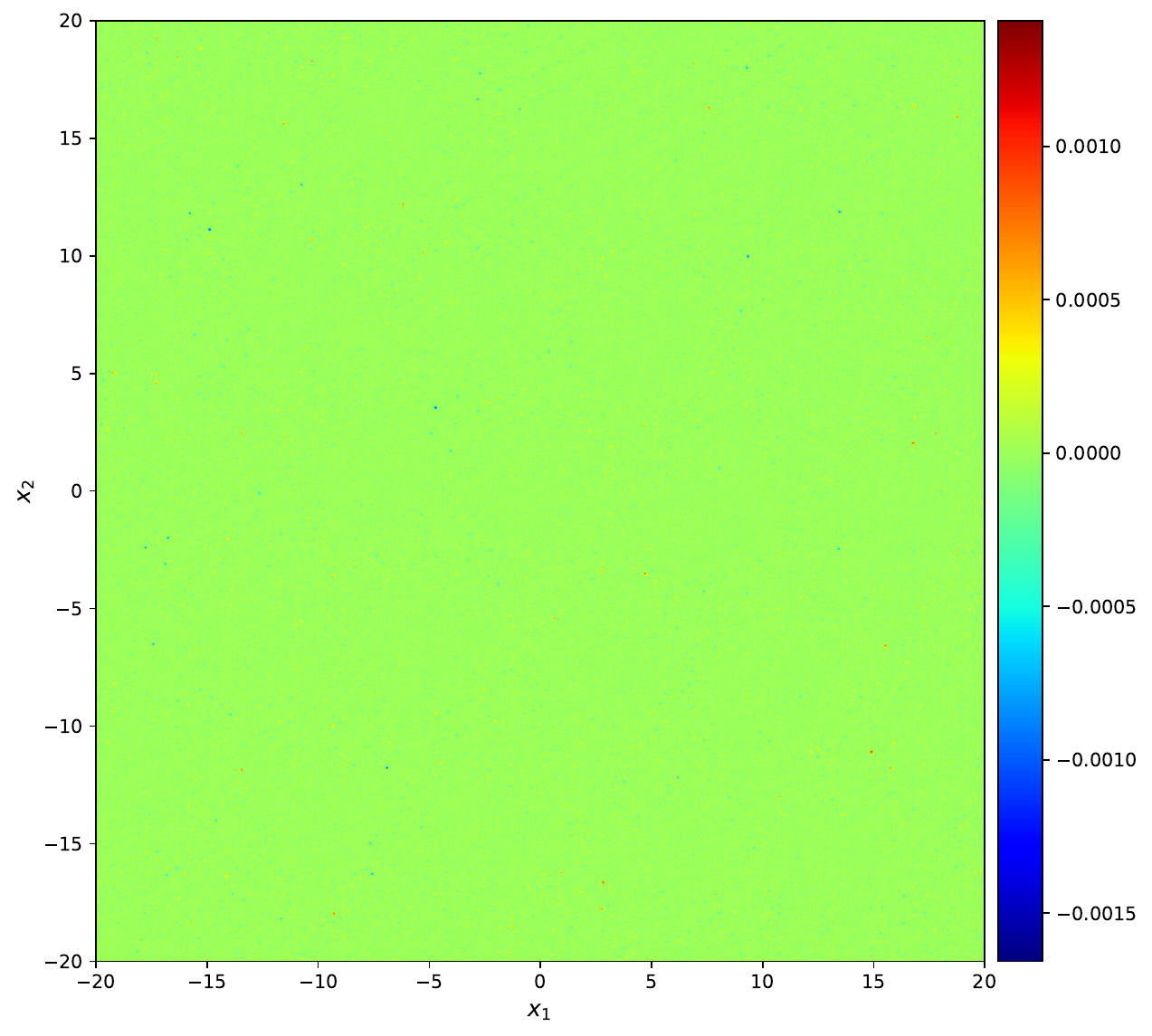}
\caption{The numerical results for Example 7 with loss function (\ref{loss_Ritz}), left shows 
the figure of the approximation $\Phi(\boldsymbol{x};c,\Theta)$ to 
the quasiperiodic funtion $u(\boldsymbol{x})$, 
right shows the error $u(\boldsymbol{x})-\Phi(\boldsymbol{x};c,\Theta)$.}\label{fig_errors_ex7_ritz}
\end{figure}

\subsection{Example 8}
In this example,  we solve the quasiperiodic elliptic 
problem (\ref{Elliptic_Equation}) with the coefficient 
having  $13$ $\mathbb Q$-independent periods  coefficient 
\[
\alpha=\sum_{i=0}^{12} \cos\left(2\pi \left(\cos\left(\frac{i\pi}{13}\right)x+\sin\left(\frac{i\pi}{13}\right)y\right)\right)+12.
\]
The source function $f$ is chosen such that 
\[
u=\sum_{i=0}^{12} \sin\left(2\pi \left(\cos\left(\frac{i\pi}{13}\right)x+\sin\left(\frac{i\pi}{13}\right)y\right)\right).
\]

In order to use the projection method, the projection matrix can be defined as 
\[
P=\begin{bmatrix}
1 & \cos(\frac{\pi}{13}) & \cos(\frac{2\pi}{13}) & \cdots & \cos(\frac{10\pi}{13}) & \cos(\frac{11\pi}{13})& \cos(\frac{12\pi}{13})\\ \\
0 & \sin(\frac{\pi}{13}) & \sin(\frac{2\pi}{13}) & \cdots & \sin(\frac{10\pi}{13}) & \sin(\frac{11\pi}{13})& \sin(\frac{12\pi}{13})
\end{bmatrix},
\]
Then it is easy to deduce the higher dimensional coefficient and exact solution 
\[
A = \sum_{i=1}^{13} \cos(2\pi y_i) + 12,\ \ \ \ 
U = \sum_{i=1}^{13} \sin(2\pi y_i).
\]

We use the loss function defined in (\ref{loss_Ritz}) 
for Step 4 of Algorithm \ref{Algorithm_1}, training the TNN for 1,000 iterations with Adam and subsequently for 2000 iterations with LBFGS.
The relative $L^2$ norm error of the approximation $\Phi(\boldsymbol{x};c,\Theta)$ 
to the exact solution $u(\boldsymbol{x})$ is \(1.9458 \times 10^{-5}\).  
The corresponding numerical results are presented in Figure \ref{fig_errors_ex8_ritz}.  
\begin{figure}[ht]
\centering
\includegraphics[width=6cm,height=5cm]{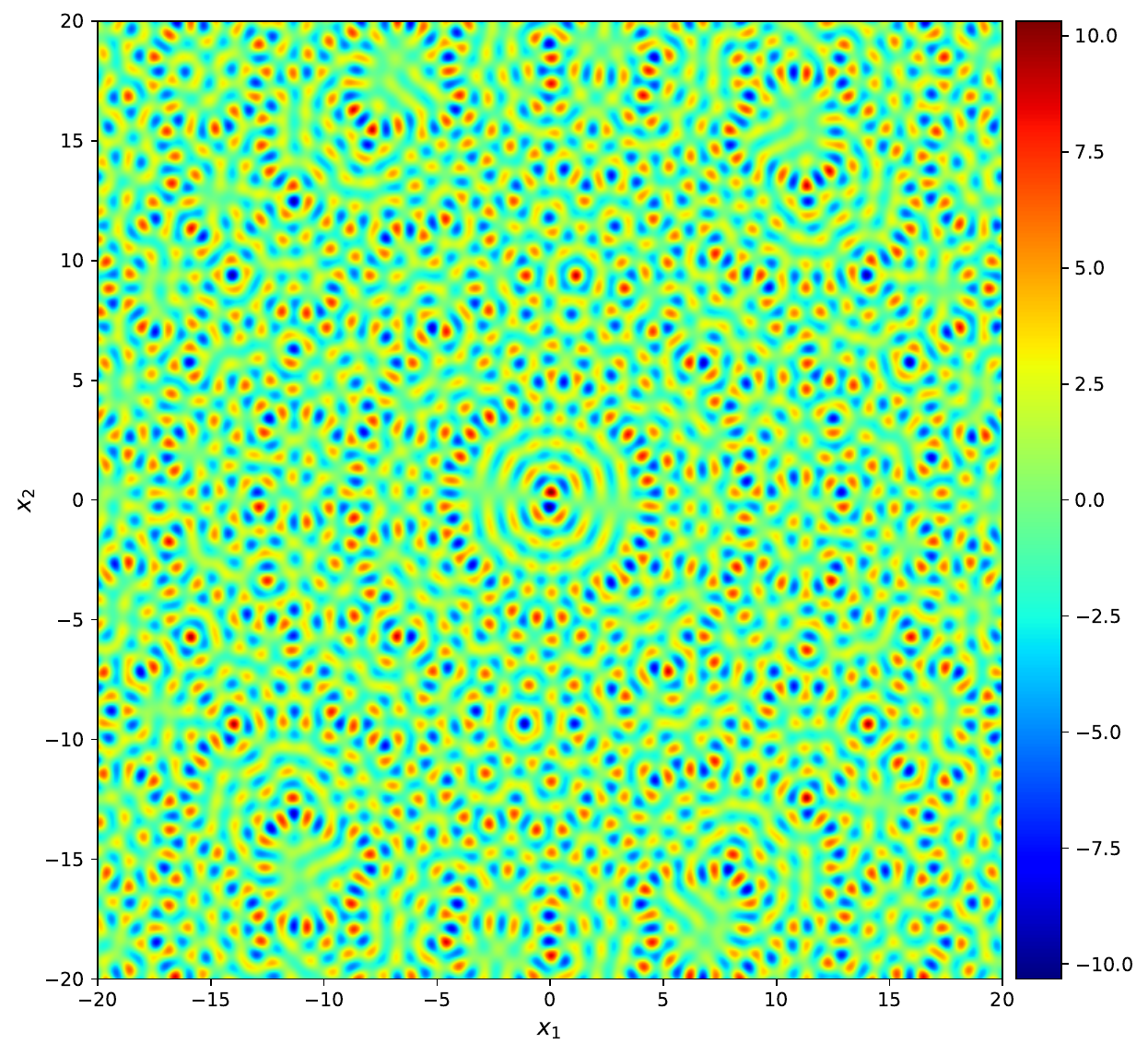}
\includegraphics[width=6cm,height=5cm]{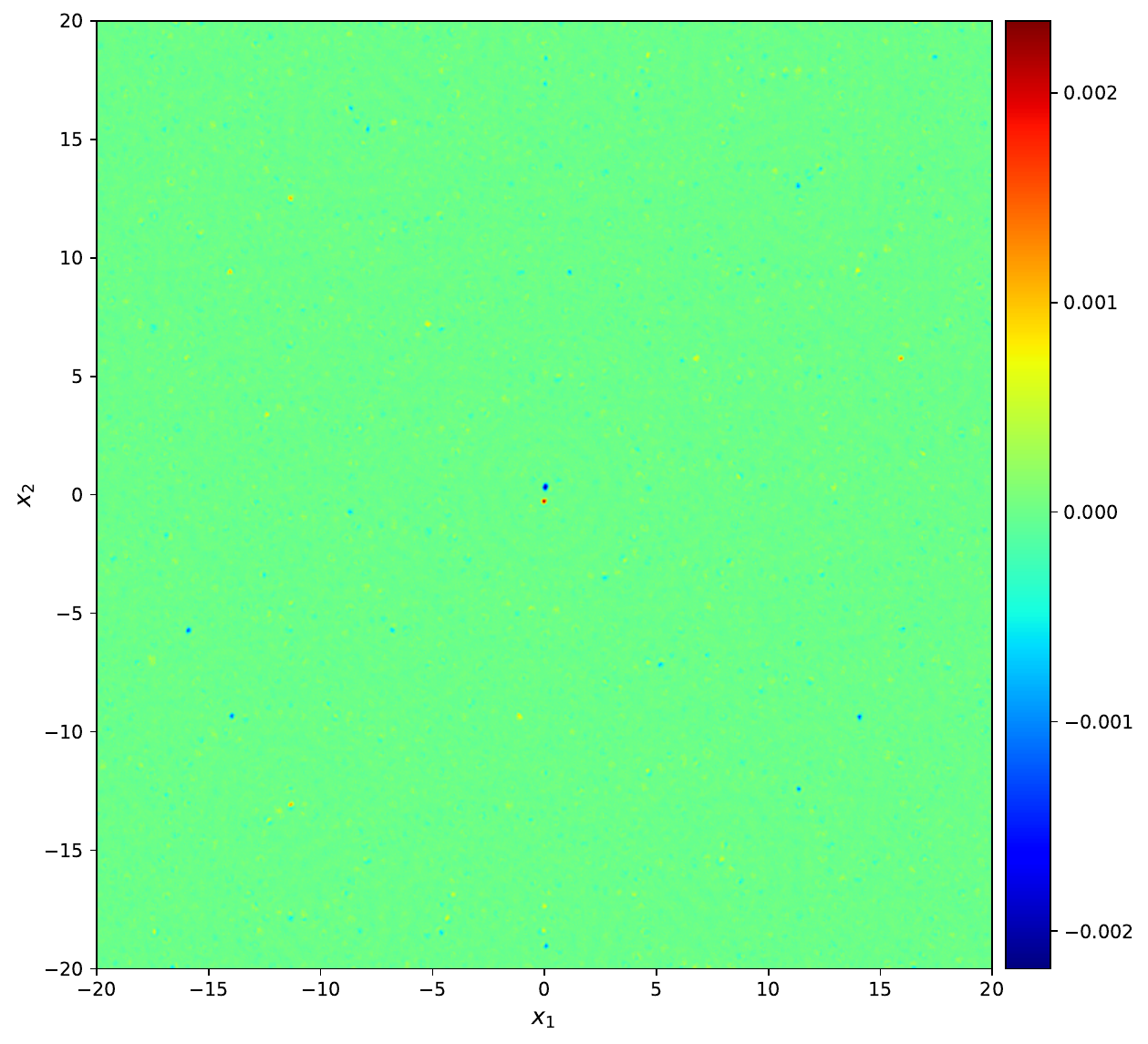}
\caption{The numerical results for Example 8 with loss function (\ref{loss_Ritz}), left shows 
the figure of the approximation $\Phi(\boldsymbol{x};c,\Theta)$ to 
the quasiperiodic funtion $u(\boldsymbol{x})$, 
right shows the error $u(\boldsymbol{x})-\Phi(\boldsymbol{x};c,\Theta)$.}\label{fig_errors_ex8_ritz}
\end{figure}

\section{Conclusion}
In this paper, we develop a TNN-based machine learning method for solving elliptic partial differential equations with quasiperiodic coefficients.
To provide a theoretical foundation for the proposed numerical method, we carry out a detailed analysis of the existence, uniqueness, 
and regularity of solutions to quasiperiodic problems.
In particular, rather than assuming a priori that the exact solution already belongs to a standard Sobolev space, we innovatively 
identify natural sufficient conditions under which the quasiperiodic solution can be lifted to a Sobolev space.
Unlike standard FNN-based machine learning methods, TNNs possess a tensor-product structure, which allows the corresponding 
high-dimensional integrals to be decomposed into one-dimensional integrals and evaluated with high accuracy.
Benefiting from this feature, the proposed TNN-based method is able to solve the resulting high-dimensional periodic problems with high accuracy.
Combined with the projection method, it further yields highly accurate approximations to the original quasiperiodic problems.
We also present detailed numerical experiments, which demonstrate the efficiency and accuracy of the proposed method.
These results indicate the strong potential of TNN-based machine learning methods for broader applications to quasiperiodic problems.
Further investigations along this direction will be carried out in future work.

\bibliographystyle{siamplain}

\end{document}